\documentclass[12pt]{amsart}
\usepackage{amsmath,amsfonts,latexsym,graphicx,amssymb,url}
\usepackage{amsmath,txfonts,pifont,bbding,pxfonts,manfnt}
\usepackage{psfrag}
\usepackage{hyperref}
\usepackage[active]{srcltx}
\usepackage{pdfsync}
\setlength{\headheight}{10pt} 
\setlength{\topmargin}{6pt}
\setlength{\headsep}{30pt} 
\setlength{\textwidth}{15cm} 
\setlength{\textheight}{22cm}
\setlength{\oddsidemargin}{1cm} 
\setlength{\evensidemargin}{1cm} 



\newcommand{\dist}{\text{dist}} 

\newcommand{\R}{{\mathbb R}} 

\theoremstyle{plain}
\newtheorem{theorem}{Theorem}[section]
\newtheorem{corollary}[theorem]{Corollary}
\newtheorem{lemma}[theorem]{Lemma}
\newtheorem{proposition}[theorem]{Proposition}
\newtheorem{definition}[theorem]{Definition}
\newtheorem{remark}[theorem]{Remark}

\providecommand{\bysame}{\makebox[3em]{\hrulefill}\thinspace}


\interfootnotelinepenalty=10000
\begin{document}

\setcounter{equation}{0}










\title[Regularity of refractors]{Regularity for the near field parallel refractor and reflector problems} 
	
\author[C. E. Guti\'errez and F. Tournier]{Cristian E. Guti\'errez and Federico Tournier}
\thanks{\today.\\ The first author was partially supported
by NSF grant DMS--1201401.}
\address{Department of Mathematics\\Temple University\\Philadelphia, PA 19122}
\email{gutierre@temple.edu}
\address{Instituto Argentino de Matem\'atica A. P. Calder\'on\\CONICET\\Saavedra 15, Buenos Aires (CP 1083), Argentina}
\email{fedeleti@aim.com}

\begin{abstract}
We prove local $C^{1,\alpha}$ estimates of solutions for the parallel refractor and reflector problems under local assumptions on the target set $\Sigma$, and no assumptions are made on the smoothness of the densities.
\end{abstract}
\maketitle
{\tiny\tableofcontents}

\setcounter{equation}{0}
\section{Introduction}
Suppose we have a domain $\Omega\subset \R^n$ and 
a domain $\Sigma$ contained in an $n$ dimensional surface in $\R^{n+1}$; 
$\Sigma$ is referred as the target domain or screen to be illuminated.
Let $n_1$ and $n_2$ be the indexes of refraction
of two homogeneous and isotropic media I and II, respectively,
and suppose that from the region $\Omega$ surrounded by medium I,
radiation emanates in the vertical direction $e_{n+1}$ with intensity $f(x)$
for $x\in \Omega$, and $\Sigma$ is surrounded by media II.
That is, all emanating rays from $\Omega$ are collimated.
A {\it parallel refractor} is an optical surface
$\mathcal R$ 
interface between media I and II,
such that all rays refracted by $\mathcal R$ into medium II
are received at the surface $\Sigma$, and the prescribed radiation intensity
received at each point $p\in \Sigma$ is $\sigma(p)$.
Assuming no loss of energy in this process, we have the conservation of energy equation
$\int_{\Omega}f(x)\,dx=\int_{\Sigma}\sigma(p)\,dp$.
Under general conditions on $\Omega$ and $\Sigma$, and when $\sigma$ is a Radon measure in $D$, the existence of parallel refractors is proved in \cite{gutierrez-tournier:parallelrefractor}. 

The purpose of this paper 
is to study local regularity of parallel refractors and reflectors. Indeed, under suitable conditions on the target and the measure $\sigma$, we prove local $C^{1,\alpha}$ estimates.
More precisely, if $u$ is a parallel refractor in $\Omega$, the target $\Sigma$ satisfies the local condition \eqref{eq:AWPL} from $(x^\star,u(x^\star))$, 
and the measure $\sigma$ satisfies a local condition
at that point, condition \eqref{eq:conditiononsigmameasureofthetube}, then $u\in C^{1,\alpha}$ in a neighborhood of the point $x^\star$. 

Throughout the paper we assume that media $II$ is denser than media $I$, that is, $n_{1}<n_{2}$.
When $n_{1}>n_{2}$, the geometry of the refractor changes. One needs to use hyperboloids of revolution instead of ellipsoids as 
indicated in \cite{gutierrez-huang:farfieldrefractor}.

The plan of the paper is as follows. Section \ref{sec:preliminaries} contains results concerning estimates of ellipsoids of revolution, and Subsection \ref{subsec:assumptiontargetrefractor} contains basic assumptions on the target. Section \ref{sec:regularityhypothesesonthetarget} contains assumptions on the target modeled on the conditions introduced by 
Loeper \cite[Proposition 5.1]{loeper:actapaper}. Indeed, we assume the target satisfies the local condition 
\eqref{eq:convexityconditiononthetargetbisbis}. We also introduce the differential condition \eqref{eq:AWP}, similar in form to condition (A3) of Ma, Trudinger and Wang \cite{MaTrudingerWang:regularityofpotentials}, and show in 
Theorem \ref{thm:awforperpendicularvectors} and Remark \ref{rmk:localequivalenceperpvectors} that \eqref{eq:AWPL} and \eqref{eq:convexityconditiononthetargetbisbis} are equivalent.
In Section \ref{subsec:localandglobalrefractors}, we prove that if an ellipsoid supports a  parallel refractor 
locally, then it supports the refractor globally provided the target satisfies the condition (AW) given in \eqref{eq:AWPLnew}. The main result in this section is Proposition \ref{prop:localimpliesglobal} used later in the proof of Theorem \ref{thm:mainestimate}. 
Section \ref{sec:definitionofrefractorandmainresults} contains the main results, Lemma \ref{lm:mainlemmarefractor} and Theorem \ref{thm:mainestimate},
and also Proposition \ref{prop:mu(S)=0} used later for the application of these results to show regularity of parallel refractors constructed in  
\cite{gutierrez-tournier:parallelrefractor}, Corollary \ref{corollary:applicationtorefractors}.
Section \ref{sec:holderregularityofgradient} contains H\"older estimates of gradients of refractors under the assumptions  
\eqref{eq:conditiononsigmameasureofthetube} and \eqref{eq:assumptiononmeasuresigma}  on the target $\Sigma$ and the measure $\sigma$ on $\Sigma$.
Section \ref{sec:example} contains examples of targets verifying the assumptions, see condition \eqref{eq:quantitativeconditiononthetarget}.
Up to this point in the paper, refractors are defined with ellipsoids supporting the refractor from above. Refractors can also be defined with ellipsoids supporting from 
below, and in Section \ref{sec:alternativedefinitionofrefractor} we obtain the same regularity results for refractors with this definition. 
In Section \ref{sec:regularityforthereflectorproblem} we obtain similar regularity results for the {\it near field parallel reflector} problem. In this case the proofs are simpler because the differential condition \eqref{eq:AWPLreflectors}
implies the global inequality \eqref{eq:reflectorconvexityconditiononthetargetbisbis}.

{\bf Acknowledgements}. It is a pleasure to thank Neil Trudinger and Philippe Delano\"e for useful comments and suggestions.

\setcounter{equation}{0}
\section{Preliminaries}\label{sec:preliminaries}

\subsection{Refraction}
We briefly review the process of refraction.
Our setting is $\R^{n+1}$. 
Points will be denoted by $X=(x,x_{n+1})$. We consider parallel rays traveling in the unit direction $e_{n+1}$. Let $T$  be a hyperplane with upper unit normal $N$ and $X\in T$. We assume that the region below $T$ has refractive index $n_1$ and the region above $T$ has refractive index $n_2$ and $\kappa:=\dfrac{n_1}{n_2}<1$, e.g., air to glass. In such case, by Snell's law of refraction, a ray coming from below in direction $e_{n+1}$ that hits $T$ at $X$ is refracted in the unit direction 
$$
\Lambda=\kappa\, e_{n+1} +\delta N, \quad \text{with}\quad 
\delta=
-\kappa\,e_{n+1}\cdot N  +\sqrt{1+\kappa^{2}\,\left((e_{n+1}\cdot N)^{2}-1\right)},$$ 
where $\delta>0$ since $\kappa<1$.

\begin{figure}[htp]
\begin{center}
   \includegraphics[width=2in]{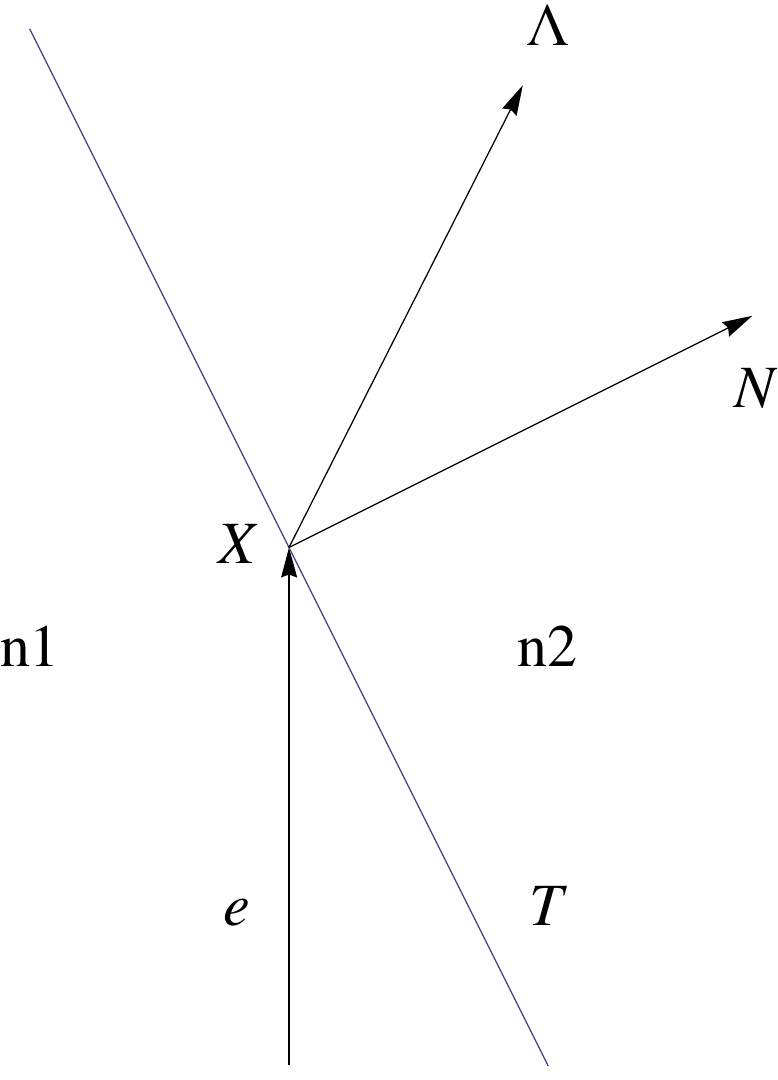}
\end{center}
  \caption{Snell's law, $n_1<n_2$}
  \label{fig:snelllaw}
\end{figure}


In particular, if $v\in \R^{n}$ and the hyperplane $T$ is so that the unit upper normal $N=\dfrac{(-v,1)}{\sqrt{1+|v|^2}}$, then the refracted unit direction is 
\begin{equation}\label{eq:defofLambdav}
\Lambda (v):=\left(-\dfrac{\delta}{\sqrt{1+|v|^2}}\, v, \dfrac{\delta}{\sqrt{1+|v|^2}}+\kappa \right):=(-Q(v)v,Q(v)+\kappa),
\end{equation}
with $Q(v)^{2}|v|^{2}+(Q(v)+\kappa)^{2}=1$ and $Q>0$\footnote{We have in this case, 
$\delta=\dfrac{-\kappa+\sqrt{1+(1-\kappa^2)|v|^2}}{\sqrt{1+|v|^2}}$.}. 
The refracted ray is $X+s\Lambda$, for $s>0$.
Here we have in mind that $T$ is the tangent plane to a refractor $u$ at $(x,u(x))$ and so $v=Du(x)$.

\subsection{Ellipsoids}

Given $b>0$ and $Y=(y,y_{n+1})$, consider  
$$E(Y,b)=\{X:|X-Y|+k(x_{n+1}-y_{n+1})=b\}.$$
$E(Y,b)$ is an ellipsoid of revolution with foci $(y,y_{n+1})$,  $\left(y,y_{n+1}-\dfrac{2\,\kappa \,b}{1-\kappa^2}\right)$.
The projection of $E(Y,b)$ over $\R^n$ is the ball $B_{b/\sqrt{1-\kappa^2}}(y)$.
We define the lower and upper parts of $E(Y,b)$ by
\begin{align*}
E^-(Y,b)&=\left\{ X\in E(Y,b): x_{n+1}\leq y_{n+1}-\dfrac{\kappa \, b}{1-\kappa^2}\right\},\\
E^+(Y,b)&=\left\{ X\in E(Y,b): x_{n+1}\geq y_{n+1}-\dfrac{\kappa \, b}{1-\kappa^2}\right\},
\end{align*}
respectively. We can regard $E^-(Y,b)$ as the graph of the function 
\begin{equation}\label{eq:functiondefininglowerellipsoid}
\phi(x)=y_{n+1}-\dfrac{\kappa\,b}{1-k^{2}}-\sqrt{\dfrac{b^{2}}{(1-\kappa^{2})^{2}}-\dfrac{|x-y|^{2}}{1-k^{2}}},
\end{equation} 
for $x\in B_{b/\sqrt{1-\kappa^2}}(y)$.
If $X=(x,\phi(x))$, $x_{n+1}=\phi(x)$, then $D_x\phi(x)=\dfrac{x-y}{y_{n+1}-x_{n+1}-k|X-Y|}$. 
A ray with direction $e_{n+1}$ that hits from below the graph of $\phi$ at the point $X=(x,\phi(x))$ is refracted along  the ray $X+s\Lambda(v)$ with $v=D\phi(x)$, and therefore it passes through the focus $Y$. 

If $X=(x,\phi(x))$ and the focus $Y$ can be written as $Y=X+s\,\Lambda(v)$ for some $s>0$ and $v\in \R^{n}$, then $v=D\phi(x)$. 
%

Given $X,Y\in \R^{n+1}$, let us define 
\begin{equation}\label{eq:definitionofc(X,Y)}
c(X,Y)=|X-Y|+k(x_{n+1}-y_{n+1}),
\end{equation}
and we have 
\begin{equation}\label{eq:estimatesforc(X,Y)}
(1-\kappa)\,|X-Y|\leq c(X,Y)\leq (1+\kappa)\,|X-Y|.
\end{equation}
Given $X_0,Y\in \R^{n+1}$ with $X_0\neq Y$, then $E(Y,c(X_0,Y))$ is the unique ellipsoid passing through $X_0$ and having upper focus at $Y$.

We have that 
\begin{equation}\label{eq:lowerellipsoid}
\text{$X\in E^-(Y,b)$ iff $y_{n+1}-x_{n+1}-\kappa\,|X-Y|\geq 0$}.
\end{equation}
and 
\[
\text{$X\in E^+(Y,b)$ iff $y_{n+1}-x_{n+1}-\kappa\,|X-Y|\leq 0$}.
\]

\subsection{Set up}

We fix a bounded domain $\Omega\subset \R^{n}$ and a cylinder set $C_{\Omega}=\Omega\times (0,M)$; points in $C_{\Omega}$ are denoted by the letter $X$.


Given $0<\delta<1$ we define the region
\begin{equation}\label{eq:definitionofregionT}
\mathcal T
=
\left\{ Y\in \R^{n+1}:\text{for each $X_0\in C_\Omega$ we have $X_0\in E^-(Y,c(X_0,Y))$ and $\Omega\subset 
B_{\delta\,c(X_0,Y)/\sqrt{1-\kappa^2}}(y)$}\right\}.
\end{equation}
The region $\mathcal T$ is open and unbounded. Notice that if $(y,y_{n+1})\in \mathcal T$, then $(y,y_{n+1}')\in \mathcal T$, for all $y_{n+1}'\geq y_{n+1}$. Also, if $Y\in \mathcal T$ and $X_0\in C_\Omega$, then $c(X_0,Y)\geq c_n\,|\Omega|^{1/n}\,\sqrt{1-\kappa^2}/\delta$ and so from \eqref{eq:estimatesforc(X,Y)} we get 
\begin{equation}\label{eq:distancefromTtoCOmega}
\dist(\mathcal T,C_\Omega)\geq C(\delta,\kappa,|\Omega|,n)>0.
\end{equation}


We have that the function
$$
\phi(x,Y,X_0)= y_{n+1}-\frac{k\,c(X_0,Y)}{1-k^{2}}-\sqrt{\frac{c(X_0,Y)^{2}}{(1-k^{2})^{2}}-\frac{|x-y|^{2}}{1-k^{2}}},
$$ 
$X_{0}=(x_{0},x_{0_{n+1}})$, defines the lower part of the ellipsoid $E(Y,c(X_0,Y))$ for $x\in B_{c(X_0,Y)/\sqrt{1-\kappa^2}}(y)$.
We then have for $X_0\in C_\Omega$, $Y\in \mathcal T$ and $X\in E^-(Y,c(X_0,Y))$, $X=(x,x_{n+1})$, with $x\in \Omega$ that
\begin{equation}\label{eq:upperestimatecwithdelta}
y_{n+1}-x_{n+1}-\kappa \,|X-Y|\geq \beta \,c(X,Y),
\end{equation}
 with $\beta=\sqrt{1-\delta^2}$.

\subsection{Estimates for the derivatives of $\phi$}\label{subsec:estimatesderivativesofphi}

We show that the function $\phi(x,Y,X_0)$ is differentiable of any order in all variables for $x\in \Omega, Y\in \mathcal T$, and $X_0\in C_\Omega$.
Let us calculate first $\dfrac{\partial \phi}{\partial x_i}$, $1\leq i \leq n$.
We have 
\[
\dfrac{\partial \phi}{\partial x_i}(x,Y,X_0)=
\dfrac{1}{1-\kappa^2}\,\dfrac{x_i-y_i}{\sqrt{\dfrac{c(X_0,Y)^{2}}{(1-k^{2})^{2}}-\dfrac{|x-y|^{2}}{1-k^{2}}} }.
\]
We have $c(X,Y)=|X-Y|+\kappa\,(x_{n+1}-y_{n+1})=c(X_0,Y)$, with $X=(x,\phi(x,Y,X_0))$, $x\in \Omega$, then
\begin{align}\label{eq:formulaforsquareroot}
y_{n+1}-x_{n+1}-\kappa\,|X-Y|
&=
(1-\kappa^2)\,(y_{n+1}-x_{n+1})-\kappa\,c(X_0,Y)\notag\\
&=
(1-\kappa^2)\,\sqrt{\dfrac{c(X_0,Y)^{2}}{(1-\kappa^{2})^{2}}-\dfrac{|x-y|^{2}}{1-k^{2}}}\geq \sqrt{1-\delta^2}\,c(X_0,Y),
\end{align}
for all $x\in \Omega$ from \eqref{eq:upperestimatecwithdelta}.
Therefore
$
\left|\dfrac{\partial \phi}{\partial x_i}(x,Y,X_0)\right|
\leq 
\dfrac{1}{\sqrt{1-\delta^2}}\,\dfrac{|x_i-y_i|}{c(X_0,Y)}.
$
Let us now calculate $\dfrac{\partial^2 \phi}{\partial x_i\partial x_j}(x,Y,X_0)$, $1\leq i,j\leq n$.
We have
\begin{equation}\label{eq:formulasecondderivativesofphi}
\dfrac{\partial^2 \phi}{\partial x_i\partial x_j}(x,Y,X_0)
=
\dfrac{\delta_{ij}\sqrt{\dfrac{c(X_0,Y)^{2}}{(1-k^{2})^{2}}-\dfrac{|x-y|^{2}}{1-k^{2}}}+ \dfrac{x_i-y_i}{1-\kappa^2}\dfrac{x_j-y_j}{\sqrt{\dfrac{c(X_0,Y)^{2}}{(1-k^{2})^{2}}-\dfrac{|x-y|^{2}}{1-k^{2}}} } }{(1-\kappa^2)\left(\sqrt{\dfrac{c(X_0,Y)^{2}}{(1-k^{2})^{2}}-\dfrac{|x-y|^{2}}{1-k^{2}}}\right)^2}.
\end{equation}
So
\begin{align*}
\left| \dfrac{\partial^2 \phi}{\partial x_i\partial x_j}(x,Y,X_0)\right|
&\leq 
\dfrac{1}{\sqrt{1-\delta^2}\,c(X_0,Y)}+ \dfrac{(1-\kappa^2)\,|x-y|^2}{\left(\sqrt{1-\delta^2}\,c(X_0,Y) \right)^3}
\leq
\dfrac{C(\kappa,\delta)}{c(X_0,Y)}\leq C
\end{align*}
for $x\in \Omega$, $Y\in \mathcal T$ and $X_0\in C_\Omega$ by \eqref{eq:distancefromTtoCOmega}.
Next we estimate the derivatives $\dfrac{\partial^2 \phi}{\partial x_i\partial y_j}(x,Y,X_0)$, $1\leq i\leq n$, $1\leq j\leq n+1$.
We have
\begin{align*}
\dfrac{\partial^2 \phi}{\partial x_i\partial y_j}(x,Y,X_0)
&=
\dfrac{1}{1-\kappa^2}
\,
\dfrac{-\delta_{ij}\,\sqrt{\dfrac{c(X_0,Y)^{2}}{(1-k^{2})^{2}}-\dfrac{|x-y|^{2}}{1-k^{2}}} -(x_i-y_i)
\left( \dfrac{\dfrac{c(X_0,Y)\,\dfrac{\partial c}{\partial y_j}(X_0,Y)}{(1-\kappa^2)^2}-\dfrac{x_j-y_j}{1-\kappa^2}}{\sqrt{\dfrac{c(X_0,Y)^{2}}{(1-k^{2})^{2}}-\dfrac{|x-y|^{2}}{1-k^{2}}}}\right) 
}{\left(\sqrt{\dfrac{c(X_0,Y)^{2}}{(1-k^{2})^{2}}-\dfrac{|x-y|^{2}}{1-k^{2}}} \right)^2}.
\end{align*}
Now $\dfrac{\partial c(X,Y)}{\partial y_j}=-\dfrac{x_j-y_j}{|X-Y|} -\kappa\,\delta_{j(n+1)}$.
Hence
\begin{align*}
\left|\dfrac{\partial^2 \phi}{\partial x_i\partial y_j}(x,Y,X_0)\right|
&\leq
\dfrac{1}{\sqrt{1-\delta^2}\,c(X_0,Y)}
+
\dfrac{|x-y|\left(\dfrac{(1+\kappa)\,c(X_0,Y)}{1-k^{2}}+|x-y|\right)}{\left(\sqrt{1-\delta^2}\,c(X_0,Y)\right)^2}\\
&\leq 
C(\kappa,\delta)\left( \dfrac{1}{c(X_0,Y)} + 1\right)\leq C
\end{align*}
for $x\in \Omega$, $Y\in \mathcal T$ and $X_0\in C_\Omega$ by \eqref{eq:distancefromTtoCOmega}.

Therefore, we obtain
$\left|\dfrac{\partial^2 \phi}{\partial x_i\partial y_j}(x,Y,X_0)\right|\leq C(\delta,K,M,|\Omega|,n)$ for all $x\in \Omega$, $X_0\in C_\Omega$, and $Y\in K$.
Moreover, these estimates also hold for any $X_0$ such that there exists $\bar X\in C_\Omega$ with $c(X_0,Y)=c(\bar X,Y)$. 

Continuing in this manner, we get that the function $\phi(x,Y,X_0)$ is $C^{\infty}$ in all its $3n+2$ variables on $\Omega\times \mathcal T\times C_\Omega$ and $|D^{\alpha}\phi(x,Y,X_0)|\leq C$, for any multi-index $\alpha=(\alpha_1,...,\alpha_{3n+2})$ with a constant $C$ depending only on $\delta, \kappa, M, |\alpha|$ and $|\Omega|$.

\subsection{Assumptions on the target $\Sigma$}\label{subsec:assumptiontargetrefractor}
We will frequently use the following fact:
\begin{align}\label{eq:frecuentfactused} 
\text{if the upper focus $Y$ of the ellipsoid defined by $\phi(x,Y,X_0)$ satisfies}\\
\text{$Y=X_0+s\Lambda(v)$ for some $s>0$ and $v\in \R^n$, then $D_x\phi(x_0,Y,X_0)=v$.}\notag
\end{align}
Here $\Lambda(v)$ is the unit vector given by  \eqref{eq:defofLambdav}.

We assume the convex hull of the target $\Sigma$ is contained in $\mathcal T$, where $\mathcal T$ is given by \eqref{eq:definitionofregionT}.
For each fixed $X\in C_{\Omega}$, we assume each $Y\in \Sigma$ can be represented parametrically with respect to $X$ by the equation $Y=X+s_X(\Lambda)\Lambda$, with $|\Lambda|=1$,  where the function $s_X$ varies with the point $X$, and $s_X(\Lambda)$ is Lipschitz in $\Lambda$ for each $X\in C_\Omega$.

\begin{lemma}\label{lm:estimatesofpointsintarget}
For each $X_0\in C_{\Omega}$, there exists a constant $C=C(X_{0})\geq 1$ such that if $\bar Y,\hat Y\in \Sigma$ 
are such that there exist
$\bar v,\hat v\in R^{n}$ and $\bar s,\hat s>0$ with $\bar Y=X_0+\bar s\, \Lambda(\bar v)$, and $\hat Y=X_0+\hat s\, \Lambda(\hat v)$, with $\Lambda$ defined by \eqref{eq:defofLambdav},
then 
\begin{equation}\label{eq:conditionbetweenYandv}
\frac{1}{C}|\bar Y-\hat Y|\leq |\bar v-\hat v|\leq C|\bar Y-\hat Y|.
\end{equation}
This implies that if $\bar Y\neq \hat Y$, are both in $\Sigma$, then the points $\bar Y,\hat Y, X_0$ cannot be aligned, in other words, from each point $X_0$ one can see at most a point in $\Sigma$ on any straight line from $X_0$.\end{lemma}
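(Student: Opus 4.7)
The plan is to factor the correspondence $Y\leftrightarrow v$ through the auxiliary map $v\mapsto \Lambda(v)$, and to reduce the claim to a bilipschitz property of $\Lambda$ on the relevant range of $v$'s. Since $\Lambda(v)$ is a unit vector, $Y=X_0+s\,\Lambda(v)$ forces $s=|Y-X_0|$ and $\Lambda(v)=(Y-X_0)/|Y-X_0|$; the map $v\mapsto\Lambda(v)$ is injective because its last coordinate $Q(v)+\kappa$ recovers $Q(v)$ and then the first $n$ coordinates recover $v$. Hence $v$ is a well-defined function of $Y\in\Sigma$ under the given parameterization through $X_0$.

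The next step is to confine the admissible $v$'s to a bounded set on which $Q(v)$ is bounded below, with constants depending only on $X_0,\kappa,\delta$. Since $\Sigma\subset\mathcal{T}$, applying \eqref{eq:upperestimatecwithdelta} at $X=X_0$ together with \eqref{eq:estimatesforc(X,Y)} yields $y_{n+1}-x_{0,n+1}\geq \kappa\,s+\beta(1-\kappa)\,s$, while the vertical component of $Y-X_0=s\,\Lambda(v)$ gives $y_{n+1}-x_{0,n+1}=s(Q(v)+\kappa)$. Therefore $Q(v)\geq \beta(1-\kappa)>0$ uniformly, and $Q^2|v|^2\leq 1$ then bounds $|v|$. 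Call the resulting bounded set $V_{X_0}$.

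For the upper estimate $|\bar Y-\hat Y|\leq C\,|\bar v-\hat v|$, I would expand
\[
\bar Y-\hat Y=(\bar s-\hat s)\,\Lambda(\bar v)+\hat s\,\bigl(\Lambda(\bar v)-\Lambda(\hat v)\bigr),
\]
bound $|\bar s-\hat s|\leq L_{X_0}\,|\Lambda(\bar v)-\Lambda(\hat v)|$ using the hypothesized Lipschitz continuity of $s_{X_0}$ in $\Lambda$, control $|\Lambda(\bar v)-\Lambda(\hat v)|$ by the Lipschitz constant of $\Lambda$ on $V_{X_0}$, and use boundedness of $\Sigma$ to bound $\hat s$. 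For the lower estimate $|\bar v-\hat v|\leq C\,|\bar Y-\hat Y|$, \eqref{eq:distancefromTtoCOmega} gives $|\bar Y-X_0|,|\hat Y-X_0|\geq c_0>0$, and then a routine estimate of $a\mapsto a/|a|$ yields $|\Lambda(\bar v)-\Lambda(\hat v)|\leq C\,|\bar Y-\hat Y|$, after which the inverse Lipschitz bound for $\Lambda$ on $V_{X_0}$ completes the argument.

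The main obstacle is establishing this inverse bound, i.e.\ that $D\Lambda$ is uniformly nondegenerate on $V_{X_0}$. Differentiating $Q^2|v|^2+(Q+\kappa)^2=1$ gives $Q_{v_j}=-Q^2 v_j/(Q|v|^2+Q+\kappa)$; substituting into $D\Lambda$ and computing $|D\Lambda\cdot w|^2$ for a unit $w\in\R^n$, the identity $(Q+\kappa)^2+Q^2|v|^2=1$ collapses the expression to yield $|D\Lambda\cdot w|^2\geq c\,|w|^2$ with $c$ depending only on the lower bound of $Q$ and the bound of $|v|$, so $\Lambda$ is bilipschitz on $V_{X_0}$. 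Finally, the alignment corollary is immediate: if $X_0,\bar Y,\hat Y$ are collinear with $\bar Y,\hat Y$ both above $X_0$ (as ensured by $y_{n+1}>x_{0,n+1}$ from Step 2), then $\Lambda(\bar v)=\Lambda(\hat v)$, so $\bar v=\hat v$, and \eqref{eq:conditionbetweenYandv} forces $\bar Y=\hat Y$.
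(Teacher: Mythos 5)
Your proof of the left inequality of \eqref{eq:conditionbetweenYandv} (Lipschitz continuity of $s_{X_0}$ plus Lipschitz continuity of $v\mapsto\Lambda(v)$ on a bounded set of $v$'s) is exactly the paper's argument. For the right inequality you take a genuinely different route: the paper uses the representation $\bar v=D_x\phi(x_0,\bar Y,X_0)$, applies the mean value theorem in $Y$ along the segment $[\bar Y,\hat Y]$ (which is why it assumes the convex hull of $\Sigma$ is bounded and contained in $\mathcal T$), and invokes the uniform bounds on $\partial^2\phi/\partial x_i\partial y_j$ from Subsection \ref{subsec:estimatesderivativesofphi}; you instead work directly with the geometry of $\Lambda$, first showing $|\Lambda(\bar v)-\Lambda(\hat v)|\leq C|\bar Y-\hat Y|$ via the Lipschitz character of $a\mapsto a/|a|$ away from the origin (legitimate by \eqref{eq:distancefromTtoCOmega}), and then inverting $\Lambda$. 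Your preliminary step bounding $Q(v)\geq\beta(1-\kappa)$ from \eqref{eq:upperestimatecwithdelta} is correct and is what makes everything quantitative; your approach also has the merit of not needing the segment $[\bar Y,\hat Y]$ to lie in $\mathcal T$.

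One step needs a different justification. Pointwise uniform nondegeneracy of $D\Lambda$ on $V_{X_0}$ (your computation $|D\Lambda\cdot w|^2\geq (Q/D)^2|w|^2$ with $D=Q|v|^2+Q+\kappa$ is correct) does \emph{not} in general imply a global lower Lipschitz bound $|\Lambda(\bar v)-\Lambda(\hat v)|\geq c|\bar v-\hat v|$: the difference $\Lambda(\bar v)-\Lambda(\hat v)$ is an \emph{average} of $D\Lambda(v(t))(\bar v-\hat v)$ along the segment, and the norm of an average can degenerate even when each term is bounded below (think of $\theta\mapsto(\cos\theta,\sin\theta)$). The gap closes immediately, however, because $\Lambda$ has an explicit global inverse on the relevant range: writing $\Lambda=(\Lambda',\Lambda_{n+1})$ one has $Q=\Lambda_{n+1}-\kappa$ and $v=-\Lambda'/(\Lambda_{n+1}-\kappa)$, and this map is Lipschitz on $\{\Lambda_{n+1}-\kappa\geq\beta(1-\kappa)\}$ with constant depending only on $\beta,\kappa$. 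Replace the $D\Lambda$ argument by this explicit inversion and the proof is complete.
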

\begin{proof}
Since the function $s_{X_0}(\Lambda)$ is Lipschitz in $\Lambda$, then the left inequality in \eqref{eq:conditionbetweenYandv}
follows.
Indeed,
\begin{align*}
|\bar Y-\hat Y|&=\left|s_{X_0}(\Lambda(\bar v))\,\Lambda(\bar v)-s_{X_0}(\Lambda(\hat v))\,\Lambda(\hat v)\right|\\
&\leq \left|s_{X_0}(\Lambda(\bar v))\,\Lambda(\bar v)-s_{X_0}(\Lambda(\bar v))\,\Lambda(\hat v)\right| 
+
\left|s_{X_0}(\Lambda(\bar v))\,\Lambda(\hat v)-s_{X_0}(\Lambda(\hat v))\,\Lambda(\hat v)\right|\\
&\leq \left|s_{X_0}(\Lambda(\bar v))\right| \,\left|\Lambda(\bar v)-\Lambda(\hat v)\right| 
+
\left|s_{X_0}(\Lambda(\bar v))-s_{X_0}(\Lambda(\hat v))\right|.
\end{align*}

To show the right inequality in \eqref{eq:conditionbetweenYandv}, from \eqref{eq:frecuentfactused} we can write that 
($x_0\in \Omega$)
\begin{align*}
|\bar v_i-\hat v_i|&=\left|D_i\phi(x_0,\bar Y,X_0)-D_i\phi(x_0,\hat Y,X_0)\right|\\
&=\left|\frac{x_{0_{i}}-\bar y_i}{\bar y_{n+1}-x_{0_{n+1}}-k|X_0-\bar Y|}-\frac{x_{0_{i}}-\hat y_i}{\hat y_{n+1}-x_{0_{n+1}}-k|X_0-\hat Y|}\right|, 
\end{align*}
$1\leq i \leq n$.

We write
\[
D_i\phi(x_0,\bar Y,X_0)-D_i\phi(x_0,\hat Y,X_0)
=
D_Y(D_i\phi)(x_0,\tilde Y,X_0)\cdot (\bar Y-\hat Y)
\]
with $\tilde Y$ on the segment $[\bar Y,\hat Y]$ joining $\bar Y$ and $\hat Y$. This segment is contained in the convex hull of $\Sigma$, and if this convex hull is bounded and contained in $\mathcal T$, then the desired estimate follows 
from the estimates for the derivatives of $\phi$ proved in Subsection \eqref{subsec:estimatesderivativesofphi}. 

%
%
%
%
\end{proof}

The following two lemmas are a consequence of the inequalities for the derivatives of $\phi$.

\begin{lemma}\label{lm:lipschitzinX0}
Let $\bar X\in C_\Omega$, $Y\in \mathcal T$ and $x_0\in \Omega$. Let $x_{0_{n+1}}=\phi(x_0,Y,\bar X)$ and set $X_0=(x_0,x_{0_{n+1}})$.
Assume that $X_0^{\star}=(x_0,x_{0_{n+1}}-h)\in C_\Omega$. Then there is a constant $C>0$ such that 
\[
0\leq \phi(x,Y,X_0)-\phi(x,Y,X_0^{\star})\leq C\,h,
\] 
for all $x\in\Omega$

\end{lemma}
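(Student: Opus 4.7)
The plan is to exploit the fact that $\phi(x,Y,X_0)$ depends on $X_0$ only through the scalar $b := c(X_0,Y) = |X_0-Y|+\kappa(x_{0_{n+1}}-y_{n+1})$, via the explicit formula \eqref{eq:functiondefininglowerellipsoid}. Treating $\phi$ as a function of $b$ alone (with $x$ and $Y$ fixed) and differentiating \eqref{eq:functiondefininglowerellipsoid}, both contributions to $\partial\phi/\partial b$ are strictly negative, so $\phi$ is strictly decreasing in $b$. By \eqref{eq:formulaforsquareroot} the radical in the denominator is bounded below by $\sqrt{1-\delta^2}\,b/(1-\kappa^2)$, which yields
$$\left|\frac{\partial \phi}{\partial b}\right| \le \frac{\kappa}{1-\kappa^2}+\frac{1}{(1-\kappa^2)\sqrt{1-\delta^2}}=:C_1,$$
uniformly for $x\in\Omega$, $Y\in\mathcal T$ and $X_0\in C_\Omega$. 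The problem thus reduces to comparing $b(X_0,Y)$ with $b(X_0^\star,Y)$.

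Since $X_0-X_0^\star=(0,h)$, we have
$$b(X_0^\star,Y)-b(X_0,Y)=|X_0^\star-Y|-|X_0-Y|-\kappa h.$$
The triangle inequality gives $\bigl||X_0^\star-Y|-|X_0-Y|\bigr| \le h$, so $|b(X_0^\star,Y)-b(X_0,Y)|\le (1+\kappa)h$. For the sign, for each $t\in[0,h]$ the intermediate point $X_t:=(x_0,x_{0_{n+1}}-t)$ lies in $C_\Omega$ (since $C_\Omega$ is a vertical cylinder and both endpoints $X_0,X_0^\star$ do), so $X_t\in E^-(Y,c(X_t,Y))$ by the definition of $\mathcal T$ together with \eqref{eq:lowerellipsoid}; that is, $(y_{n+1}-x_{t,n+1})/|X_t-Y|\ge \kappa$. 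Integrating $\partial_t|X_t-Y|=(y_{n+1}-x_{t,n+1})/|X_t-Y|$ over $[0,h]$ gives $|X_0^\star-Y|-|X_0-Y|\ge \kappa h$, hence $b(X_0^\star,Y)\ge b(X_0,Y)$.

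Combining these two observations, the decreasing monotonicity of $\phi$ in $b$ together with $b(X_0^\star,Y)\ge b(X_0,Y)$ give $\phi(x,Y,X_0^\star)\le \phi(x,Y,X_0)$, proving the nonnegativity. The mean value theorem, together with the uniform bound on $|\partial\phi/\partial b|$, then yields $|\phi(x,Y,X_0)-\phi(x,Y,X_0^\star)|\le C_1(1+\kappa)h=:Ch$, uniformly in $x\in\Omega$. I do not anticipate a substantive obstacle; the argument is essentially an accounting exercise once one observes the one-dimensional nature of the $X_0$-dependence. The most delicate point is the sign verification $b(X_0^\star,Y)\ge b(X_0,Y)$, which rests precisely on the refractor condition $y_{n+1}-x_{n+1}\ge \kappa|X-Y|$ for $X\in C_\Omega$, $Y\in \mathcal T$ encoded in the definition of $\mathcal T$.
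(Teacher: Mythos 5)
Your proof is correct and follows essentially the same route as the paper's: a mean value theorem argument exploiting that $\phi$ depends on $X_0$ only through $c(X_0,Y)$, combined with the derivative bounds of Subsection \ref{subsec:estimatesderivativesofphi}; you merely make explicit the factorization through $b=c(X_0,Y)$ and the sign verification that the paper leaves implicit. The only imprecision is the claim that $X_0\in C_\Omega$ (not among the hypotheses, since $x_{0_{n+1}}=\phi(x_0,Y,\bar X)$ need not lie below $M$), but the inequality $y_{n+1}-x_{t,n+1}\ge\kappa\,|X_t-Y|$ you need still holds for all $t\in[0,h]$, because it holds at $t=0$ (as $X_0\in E^-(Y,c(\bar X,Y))$ with $\bar X\in C_\Omega$ and $x_0\in\Omega$, by \eqref{eq:upperestimatecwithdelta}) and the quantity $y_{n+1}-x_{t,n+1}-\kappa\,|X_t-Y|$ has $t$-derivative at least $1-\kappa>0$.
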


\begin{proof}
We have $0\leq \phi(x,Y,X_0)-\phi(x,Y,X_0^{\star})=\dfrac{\partial\phi}{\partial x_{0_{n+1}}}(x,Y,\bar X_0)\,h$ for some $\bar X_0\in [X_0^{\star},X_0]$.
Since $c(\bar X_0,Y)=c(\hat X,Y)$ for some $\hat X\in C_M$, it follows from the estimates in Subsection \eqref{subsec:estimatesderivativesofphi} 
that $\left|\dfrac{\partial\phi}{\partial x_{0_{n+1}}}(x,Y,\bar X_0)\right|\leq C$ for all $x\in\Omega$, where $C$ depends on $\delta, \kappa, M$ and $|\Omega|$.
\end{proof}

\begin{lemma}\label{lm:estimateslipinYandx}

There exists a constant $C>0$ such that for all $Y,\bar Y\in \mathcal T$ with the straight segment $[Y,\bar Y]\subset \mathcal T$, and $X_0 \in C_{\Omega}$, we have 
\[
|\phi(x,Y,X_0)-\phi(x,\bar Y,X_0)|\leq C\,|x-x_0|\,|Y-\bar Y|
\] 
for all $x\in \Omega$, which is assumed convex.
\end{lemma}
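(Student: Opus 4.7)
The plan is to exploit a crucial observation: by the definition of $c(X_0,Y)$ and $\mathcal T$, the ellipsoid $E(Y,c(X_0,Y))$ passes through $X_0$, and $X_0$ lies on its lower part. Hence $\phi(x_0,Y,X_0)=x_{0_{n+1}}$, which is \emph{independent of} $Y$. The same holds for $\bar Y$, so the function
\[
g(x):=\phi(x,Y,X_0)-\phi(x,\bar Y,X_0)
\]
satisfies $g(x_0)=0$. The strategy is therefore to bound $g(x)$ by the fundamental theorem of calculus applied to $g$ along the segment $[x_0,x]\subset\Omega$ (using convexity of $\Omega$), and then to bound $|\nabla_x g|$ uniformly by $C|Y-\bar Y|$ using the mixed-derivative estimates proved in Subsection~\ref{subsec:estimatesderivativesofphi}.

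More precisely, first I would write
\[
g(x)=\int_0^1 D_x g(x_0+t(x-x_0))\cdot(x-x_0)\,dt,
\]
so $|g(x)|\leq |x-x_0|\,\sup_{z\in[x_0,x]}|D_x g(z)|$. Next, since the segment $[Y,\bar Y]$ lies in $\mathcal T$, for each fixed $z\in\Omega$ the map $Y\mapsto D_x\phi(z,Y,X_0)$ is well defined and differentiable along this segment, and
\[
D_x g(z)=D_x\phi(z,Y,X_0)-D_x\phi(z,\bar Y,X_0)=\int_0^1 D_Y D_x\phi\bigl(z,tY+(1-t)\bar Y,X_0\bigr)\,(Y-\bar Y)\,dt.
\]
By the uniform bound $|\partial^2\phi/\partial x_i\partial y_j|\leq C(\delta,\kappa,M,|\Omega|,n)$ established in Subsection~\ref{subsec:estimatesderivativesofphi} for $Y\in\mathcal T$ (which applies pointwise along $[Y,\bar Y]\subset\mathcal T$), we obtain $|D_x g(z)|\leq C|Y-\bar Y|$ for every $z\in\Omega$. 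Combining the two displays gives the claimed estimate.

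The proof is essentially a routine application of the derivative bounds already derived; the only subtlety — and the one genuine idea needed — is the observation that $\phi(x_0,Y,X_0)$ is independent of $Y$, which provides the zero of $g$ at $x_0$ and converts what would be an $L^\infty$ bound into the desired Lipschitz-type bound with factor $|x-x_0|$. The assumptions that $\Omega$ be convex and $[Y,\bar Y]\subset\mathcal T$ are exactly what is needed to keep the integrals of derivatives inside the region where the estimates of Subsection~\ref{subsec:estimatesderivativesofphi} are valid; no further obstacle arises.
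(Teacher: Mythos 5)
Your proposal is correct and follows essentially the same route as the paper: both hinge on the observation that $\phi(x_0,Y,X_0)=x_{0_{n+1}}$ is independent of $Y$ (you use it as $g(x_0)=0$, the paper as $\partial_{y_j}\phi(x_0,Y,X_0)=0$), and both then reduce the estimate to the uniform bounds on $\partial^2\phi/\partial x_i\partial y_j$ from Subsection \ref{subsec:estimatesderivativesofphi}, using convexity of $\Omega$ and $[Y,\bar Y]\subset\mathcal T$ to keep the intermediate points in the admissible region. The only difference is the order in which you expand (in $x$ first, then in $Y$, versus the paper's reverse order), which is immaterial.
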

\begin{proof}

Since $\phi(x_0,Y,X_0)=x_{0_{n+1}}$ for all $Y\in \Sigma$, it follows that $\dfrac{\partial\phi(x_0,Y,X_0)}{\partial y_{j}}=0$. We then write 
\begin{align*}
\phi(x,Y,X_0)-\phi(x,\bar Y,X_0)&=\sum_{j=1}^{n+1}\dfrac{\partial\phi(x,\xi,X_0)}{\partial y_{j}}(Y_j-\bar Y_j), 
\quad \text{for some $\xi\in [Y,\bar Y]$}\\ 
&=\sum_{j=1}^{n+1}\left(\frac{\partial\phi(x,\xi,X_0)}{\partial y_{j}}-\frac{\partial\phi(x_0,\xi,X_0)}{\partial y_{j}}\right)(Y_j-\bar Y_j)\\
&=\sum_{j=1}^{n+1}\sum_{i=1}^{n}\frac{\partial^{2}\phi(\zeta_i,\xi,X_0)}{\partial x_{i}\partial y_{j}}(x_{i}-x_{0_{i}})(Y_j-\bar Y_j)
\end{align*}
 for some $\zeta_i\in [x_0,x]$.
Then the lemma follows from the estimates of the derivatives of $\phi$ proved in Subsection \eqref{subsec:estimatesderivativesofphi}.
\end{proof}

\setcounter{equation}{0}
\section{Regularity hypothesis on the target set $\Sigma$}\label{sec:regularityhypothesesonthetarget}

Given $\bar Y,\hat Y\in \mathcal T$ and $X_0\in C_{\Omega}$, let $\bar v=D_x\phi(x_0,\bar Y,X_0)$, $\hat v=D_x\phi(x_0,\hat Y,X_0)$,
 and $v(\lambda)=(1-\lambda)\bar v +\lambda \hat v$, with $\lambda \in [0,1]$. We consider the set of points 
 \[
 C(X_0,\bar Y,\hat Y)=\{X_0+s\,\Lambda(v(\lambda)):s>0,\lambda\in [0,1]\}, 
 \]
where $\Lambda(v(\lambda))$ is defined by \eqref{eq:defofLambdav}.
This set is a two dimensional  wedge-shaped surface, in general non-planar, containing all rays having directions $\Lambda(v(\lambda))$, $0\leq \lambda \leq 1$,
emanating from $X_0$. The curve describing the tip of the vector $\Lambda(v(\lambda))$, pictured in Figure \ref{fig:picofcone}, is not contained in the plane generated  by the rays with directions $\Lambda(\bar v)$ and $\Lambda(\hat v)$. If $Y\in C(X_0,\bar Y,\hat Y)$, then $X_0\in E^-(Y,c(X_0,Y))$. Hence, if $Y(\lambda)=X_0+s\,\Lambda(v(\lambda))\in C(X_0,\bar Y,\hat Y)$, then by \eqref{eq:frecuentfactused} $D_x \phi(x_0,Y(\lambda),X_0)=v(\lambda)$. 
\begin{figure}[htp]
\begin{center}
   \includegraphics[width=2in]{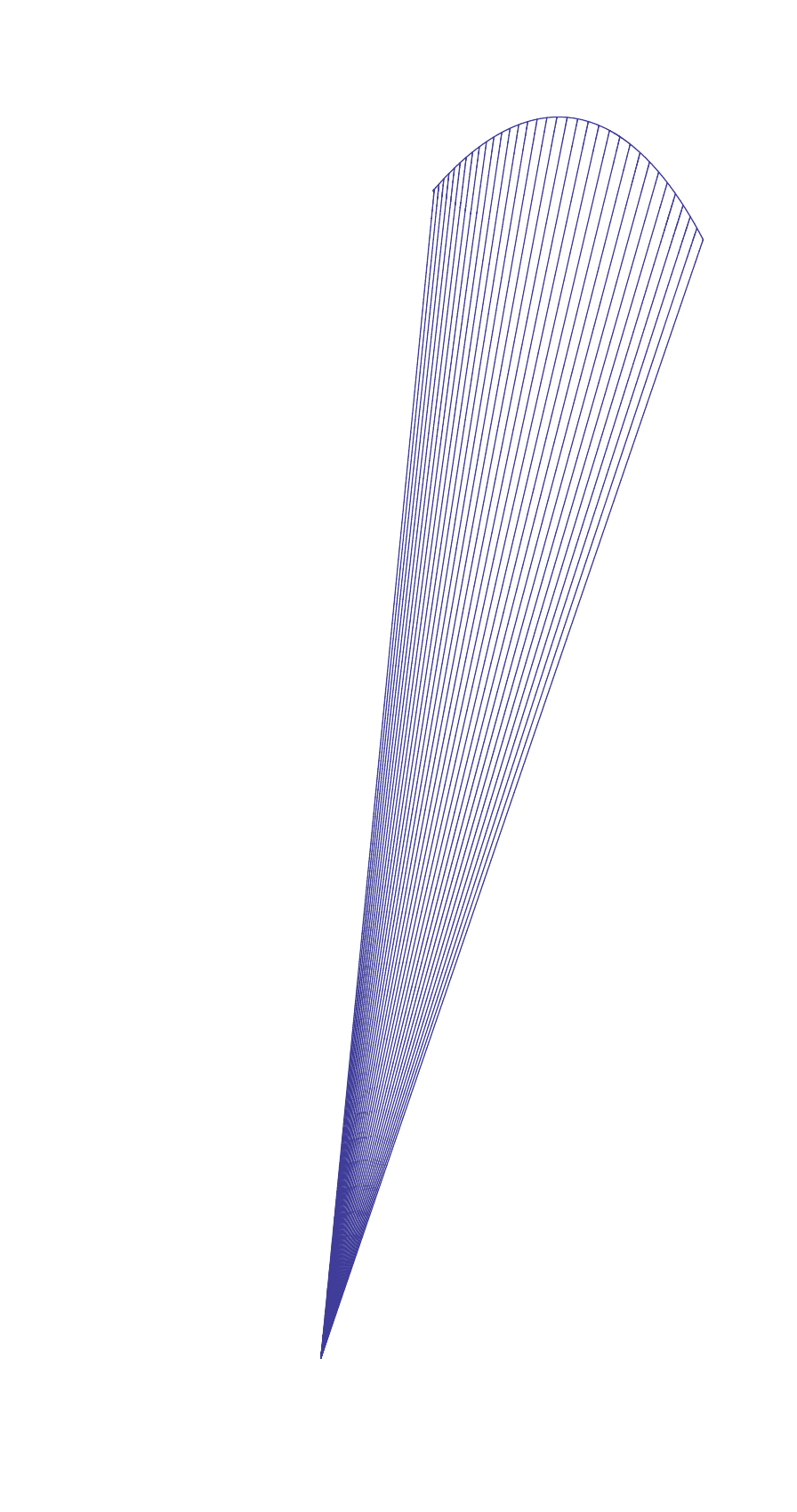}
\end{center}
  \caption{The edges of the wedge are $\Lambda(\bar v)$ and $\Lambda(\hat v)$, with $\bar v=(1,0)$, $\hat v=(-2,1)$, $n_1=1$ , $n_2=3/2$}
  \label{fig:picofcone}
\end{figure}

For $\bar Y,\hat Y\in \mathcal T$, and $X_0\in C_\Omega$,  
let us define 
\[
[\bar Y,\hat Y]_{X_0}:=\Sigma \cap C(X_0,\bar Y,\hat Y).
\]
Notice that by \eqref{eq:conditionbetweenYandv} each ray $X_0+s\,\Lambda(v(\lambda))\in C(X_0,\bar Y,\hat Y)$ intersects $\Sigma$ in at most one point for each $\lambda$.
Points in $[\bar Y,\hat Y]_{X_0}$ have the form 
\[
Y(\lambda)=X_0+s_{X_0}(\Lambda(v(\lambda)))\,\Lambda(v(\lambda)), \qquad 0\leq \lambda\leq 1.
\]

We will introduce the following local definition on the target $\Sigma$.

\begin{definition}\label{def:localtargetbis}
If $X_0\in C_\Omega$ we say that the {\it target $\Sigma$ is regular from $X_0$} if 
there exists a neighborhood $U_{X_0}$ and positive constants $C_1,C_2$, depending on $U_{X_0}$, such that for all $\bar Y,\hat Y\in \Sigma$ and $Z=(z,z_{n+1})\in U_{X_0}$
we have
\begin{equation}\label{eq:convexityconditiononthetargetbisbis}
\phi(x,Y_Z(\lambda),Z)\geq\min\left\{\phi(x,\bar Y,Z),\phi(x,\hat Y,Z)\right\}+C_1\,|\bar Y-\hat Y|^{2}|x-z|^{2}\end{equation}
for all $x\in \Omega$ with $|x-z|\leq C_2$, $1/4\leq \lambda \leq 3/4$, and $Y_{Z}(\lambda)=Z+s_{Z}(\Lambda(v(\lambda)))\,\Lambda(v(\lambda))$. 
Here $\bar v=D_x\phi(z,\bar Y,Z)$, $\hat v=D_x\phi(z,\hat Y,Z)$,
 and $v(\lambda)=(1-\lambda)\bar v +\lambda \hat v$.
 
 \end{definition}

We introduce below the differential condition \eqref{eq:AWP}, similar in form to condition (A3) of Ma, Trudinger and Wang \cite{MaTrudingerWang:regularityofpotentials}. Assuming that the function $s_{X_0}$ in the parametrization of the target is $C^2$, and the set $[\bar Y,\hat Y]_{X_0}$ is a curve for each $X_0$,
we prove in the following theorem that 
\eqref{eq:AWP} is equivalent to \eqref{eq:convexityconditiononthetargetbisbis}, see also Remark \ref{rmk:localequivalenceperpvectors} for the local case.
Theorem \ref{thm:awforperpendicularvectors} is similar in form to a result of  
Loeper, \cite[Proposition 5.1]{loeper:actapaper}.

\begin{theorem}\label{thm:awforperpendicularvectors}
Suppose that there exists a constant $C$ such that for all $\xi$ and $\eta$, perpendicular vectors in $R^{n}$, and for  $X_0\in C_{\Omega}$ and for $Y_0\in\Sigma$, we have 
\begin{equation}\label{eq:AWP}
\dfrac{d^{2}}{d\epsilon^{2}}\left. \left\langle D_x^{2}\phi(x_0,Y_\epsilon,X_0)\eta,\eta\right\rangle \right|_{\epsilon=0}\leq -C|\xi|^{2}|\eta|^{2},
\end{equation}
where, as before $v_0=D\phi(x_0,Y_0,X_0)$ and $Y_\epsilon=X_0+s(\Lambda(v_0+\epsilon\xi))\Lambda(v_0+\epsilon\xi)$.

Then there exist structural constants $\sigma$ and $C$ such that for $\bar Y,\hat Y\in \Sigma$ and $X_0\in C_{\Omega}$ we have for $\lambda\in[1/4,3/4]$ and for $|x-x_0|\leq \sigma$ that
\begin{equation}\label{eq:minconditiontarget}
\phi(x,Y(\lambda),X_0)\geq \min\{\phi(x,\bar Y,X_0),\phi(x,\hat Y,X_0)\}+C|\bar Y-\hat Y|^{2}|x-x_0|^{2}.
\end{equation}

Conversely, \eqref{eq:minconditiontarget} implies \eqref{eq:AWP}.
\end{theorem}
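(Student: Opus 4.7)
The plan is to prove both directions by Taylor expanding $h(\lambda) := \phi(x, Y(\lambda), X_0)$ in $x$ about $x_0$ and exploiting three identities from the construction: (i) $\phi(x_0, Y, X_0) = x_{0,n+1}$ for every admissible $Y$; (ii) $D_x\phi(x_0, Y(\lambda), X_0) = v(\lambda) = (1-\lambda)\bar v + \lambda\hat v$ by \eqref{eq:frecuentfactused}; (iii) $v(\lambda)$ is linear in $\lambda$. Writing $z = x - x_0$, $\xi = \hat v - \bar v$, and $M(\lambda) = D_x^2\phi(x_0, Y(\lambda), X_0)$, the uniform bounds of Subsection \ref{subsec:estimatesderivativesofphi}, together with $|Y'(\lambda)|\le C|\xi|$ and $|Y''(\lambda)|\le C|\xi|^2$ (consequences of Lemma \ref{lm:estimatesofpointsintarget} and smoothness of the parametrization), yield
\begin{equation*}
h(\lambda) = x_{0,n+1} + v(\lambda)\cdot z + \tfrac12 z^\top M(\lambda)z + R(\lambda,z),\qquad |R(\lambda,z)|\le C|z|^3,
\end{equation*}
with smooth $\lambda$-dependence satisfying $|\partial_\lambda^k R|\le C|\xi|^k|z|^3$ for $k\le 2$.

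For the forward direction, set $N(\lambda) = M(\lambda) - (1-\lambda)M(0) - \lambda M(1)$, so $N(0) = N(1) = 0$ and $N''(\lambda) = D_v^2 M(v(\lambda))(\xi,\xi)$. Hypothesis \eqref{eq:AWP} states $\eta^\top N''(\lambda)\eta\le -c|\xi|^2|\eta|^2$ for $\eta\perp\xi$, and the Green's-function identity $N(\lambda) = -\int_0^1 G(\lambda,s)N''(s)\,ds$ with $G(\lambda,s) = \min(\lambda,s)(1-\max(\lambda,s))\ge 0$ and $\int_0^1 G\,ds = \lambda(1-\lambda)/2$ upgrades this to $\eta^\top N(\lambda)\eta\ge c\lambda(1-\lambda)|\xi|^2|\eta|^2$ for $\eta\perp\xi$. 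Linearity of $v(\lambda)$ then gives
\begin{equation*}
h(\lambda) - (1-\lambda)h(0) - \lambda h(1) = \tfrac12 z^\top N(\lambda)z + \widetilde R(\lambda,z),\qquad |\widetilde R|\le C|\xi|^2|z|^3.
\end{equation*}
Decompose $z = z_\parallel + z_\perp$ with $z_\parallel$ along $\xi$ and split into two regimes. When $|z_\perp|\ge |z|/\sqrt 2$, the hypothesis together with $\|N(\lambda)\|\le C|\xi|^2$ (from $N(0)=N(1)=0$ and $\|N''\|\le C|\xi|^2$) gives $z^\top N(\lambda)z\ge c|\xi|^2|z|^2$ after absorbing the cross terms, and $(1-\lambda)h(0)+\lambda h(1)\ge \min\{h(0),h(1)\}$ closes the bound. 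When $|\xi\cdot z|> |\xi||z|/\sqrt 2$, the exact identity $(1-\lambda)h(0)+\lambda h(1) - \min\{h(0),h(1)\} = \min(\lambda,1-\lambda)|h(1)-h(0)|$ combined with $|h(1)-h(0)| = |\xi\cdot z|+O(|\xi||z|^2)\ge c|\xi||z|$ yields a linear-in-$|z|$ lower bound that dominates $|\xi|^2|z|^2$ for $|z|\le\sigma$ small, since $|\xi|\sim|\bar Y-\hat Y|$ (Lemma \ref{lm:estimatesofpointsintarget}) is bounded by the diameter of the convex hull of $\Sigma\subset\mathcal T$.

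For the converse, apply \eqref{eq:minconditiontarget} at $\lambda = 1/2$ with $\bar v = v_0 - \epsilon\xi$, $\hat v = v_0 + \epsilon\xi$, so that $Y(1/2) = Y_0$ and $|\bar Y - \hat Y|^2\sim 4\epsilon^2|\xi|^2$. For $\eta\perp\xi$ with $|\eta|=1$, choose $z = t\eta + t^2\zeta$ where $\zeta$ is the multiple of $\xi$ that kills the $t^2$-coefficient of the first-order variation $A(x) := \partial_\epsilon\phi(x,Y_\epsilon,X_0)|_{\epsilon=0} = \xi\cdot z + \tfrac12 z^\top D_v M(v_0)\xi z + O(|\xi||z|^3)$; then $|A(x)| = O(|\xi|t^3)$. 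Substituting $\phi(x,Y_{\pm\epsilon}) - \phi(x,Y_0) = \pm\epsilon A(x) + \epsilon^2 B(x) + O(\epsilon^3|\xi|^3|z|)$ with $B(x) = \tfrac14 z^\top D_v^2 M(v_0)(\xi,\xi)z + O(|\xi|^2|z|^3)$ into the min inequality, dividing by $\epsilon^2 t^2$, and sending $\epsilon, t\to 0$ along the scaling $t = \epsilon|\xi|$ extracts $\eta^\top D_v^2 M(v_0)(\xi,\xi)\eta\le -c|\xi|^2$, which is \eqref{eq:AWP}.

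The main obstacle is the parallel regime in the forward direction: hypothesis \eqref{eq:AWP} provides concavity of $v\mapsto M(v)$ only in directions perpendicular to $\xi$, so a purely second-order argument fails along $\xi$. This is remedied by the linear-in-$\lambda$ term $v(\lambda)\cdot z$, whose interpolation gap $\min(\lambda,1-\lambda)|\xi\cdot z|$ produces a lower bound of order $|\xi||z|$ that beats the target $|\xi|^2|z|^2$ once $|\xi|$ and $|z|$ are bounded. The analogous subtlety in the converse, namely that the first-order-in-$\epsilon$ contribution $\epsilon A(x)$ would obscure the desired second-derivative bound, is disposed of by the second-order correction $\zeta$, engineered so that $|A(x)|$ becomes subdominant to the curvature term $\epsilon^2 B(x)$ in the chosen scaling limit.
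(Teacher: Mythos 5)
Your architecture is sound and, in the forward direction, genuinely different from the paper's: where you split into a ``mostly perpendicular'' and a ``mostly parallel'' regime and exploit the interpolation gap $(1-\lambda)h(0)+\lambda h(1)-\min\{h(0),h(1)\}\gtrsim \min(\lambda,1-\lambda)|\xi\cdot z|$ in the parallel case, the paper avoids any case split by replacing $\lambda$ with a shifted $\lambda'=\lambda\mp C_2\lambda(1-\lambda)|\xi|\,|x-x'|$ chosen so that the first-order term $(\lambda'-\lambda)\langle\xi,x-x'\rangle$ cancels the bad quadratic contribution $C_2\lambda(1-\lambda)|\xi|^2|x-x'|^2$ exactly. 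Your route is viable, but the step ``when $|z_\perp|\ge|z|/\sqrt2$ ... $z^\top N(\lambda)z\ge c|\xi|^2|z|^2$ after absorbing the cross terms'' fails as written. In that regime $|z_\parallel|$ may still be comparable to $|z_\perp|$, and off the subspace $\xi^\perp$ your only control on $N(\lambda)$ is the generic bound $\|N(\lambda)\|\le C\lambda(1-\lambda)|\xi|^2$, whose constant $C$ is unrelated to the coercivity constant $c$ on $\xi^\perp$: for instance $N(\lambda)=\lambda(1-\lambda)|\xi|^2\,\mathrm{diag}(-C,c)$ with $\xi=e_1$ and $C\gg c$ satisfies $N(0)=N(1)=0$, $\|N''\|\le 2C|\xi|^2$ and $\eta^\top N''\eta\le-2c|\xi|^2|\eta|^2$ for $\eta\perp\xi$, yet $z^\top Nz<0$ at $|z_\perp|=|z|/\sqrt2$. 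The repair is routine: split instead at $|z_\parallel|\le\theta|z|$ with $\theta$ a small structural constant determined by $c/C$, noting that your parallel-regime argument works for any fixed threshold $\theta>0$ (with $\sigma$ small, since $|\xi|\le C$ by Lemma \ref{lm:estimatesofpointsintarget} and the boundedness of the convex hull of $\Sigma$).

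In the converse there is a second quantitative slip. The corrector $\zeta$ only achieves $|A(x)|=O(|\xi|t^3)$, not $A\equiv 0$, so after dividing by $\epsilon^2t^2$ the surviving term is $\epsilon|A|/(\epsilon^2t^2)=O(|\xi|\,t/\epsilon)$. Under your scaling $t=\epsilon|\xi|$ this is $O(|\xi|^2)$ --- the same order as the quantity $\eta^\top D_v^2M(v_0)(\xi,\xi)\eta$ you are trying to bound --- so the limit does not close unless the implied constant happens to be smaller than $c$. You need $t=o(\epsilon)$, e.g.\ $t=\epsilon^2$, after which all error terms vanish and the argument goes through. The paper sidesteps the corrector entirely by restricting $x$ to the level set $S_\epsilon=\{\phi(\cdot,Y_{-\epsilon},X_0)=\phi(\cdot,Y_\epsilon,X_0)\}$, whose normal at $x_0$ is parallel to $\xi$: there the minimum equals the average, the first-order contribution disappears identically along a curve $\gamma\subset S_\epsilon$ with $\gamma'(0)=\eta$, and a second difference in $\epsilon$ yields \eqref{eq:AWP}. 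Both of your gaps are repairable without new ideas, but as stated each is a step that would fail.
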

\begin{proof}

Let $\bar Y,\hat Y\in\Sigma$ and $X_0\in C_{\Omega}$. Let $v(\lambda)=(1-\lambda)D\phi(x_0,\bar Y,X_0)+\lambda D\phi(x_0,\hat Y,X_0)$, $\xi=D\phi(x_0,\hat Y,X_0)-D\phi(x_0,\bar Y,X_0)=\hat v-\bar v$, and $Y(\lambda)=X_0+s_{X_0}(\Lambda(v(\lambda)))\Lambda(v(\lambda))$.
Notice that for $\eta$ perpendicular to $\xi$ we have 
\begin{equation}\label{eq:AWPbis}
\dfrac{d^{2}}{d\lambda^{2}}\left\langle D_x^{2}\phi(x_0,Y(\lambda),X_0)\eta,\eta\right\rangle \leq -C|\xi|^{2}|\eta|^{2},
\end{equation} 
for all $\lambda\in[0,1]$.
Indeed, fix $\lambda\in (0,1)$ and $\xi,\eta\in R^{n}$, $\xi\perp \eta$. 
Applying \eqref{eq:AWP} with $Y_0
\rightsquigarrow Y(\lambda)$, $v_0\rightsquigarrow v(\lambda):=\bar v+\lambda\xi$ (notice that $\bar v+\lambda\xi=D_x\phi(x_0,Y(\lambda),X_0)$ by \eqref{eq:frecuentfactused}),
and $Y_\epsilon \rightsquigarrow X_0+s_{X_0}(\Lambda(v(\lambda)+\epsilon\xi))\Lambda(v(\lambda)+\epsilon\xi)$,
we obtain
$\dfrac{d^{2}}{d\epsilon^{2}}\langle D^{2}\phi(x_0, Y_\epsilon,X_0)\eta,\eta\rangle|_{\epsilon=0}\leq -C|\xi|^{2}|\eta|^{2}$.
Since $Y_\epsilon=Y(\epsilon+\lambda)$, \eqref{eq:AWPbis} follows.

For $x\in R^{n}$, let $x^{\prime}$ denote the orthogonal projection of $x$ on the hyperplane through $x_0$ and normal $\xi$, so $x^{\prime}-x_0$ is perpendicular to $\xi$.
We let $\eta=x^{\prime}-x_0$.

We will first show that there exist positive constants $C_1$ and $C_2$ such that 
\begin{align}\label{eq:inequalityofhessians}
&\left\langle D_x^{2}\phi(x_0,Y(\lambda),X_0)(x-x_0),x-x_0\right\rangle\\
&\geq \left\langle ((1-\lambda)D_x^{2}\phi(x_0,\bar Y,X_0)+\lambda D_x^{2}\phi(x_0,\hat Y,X_0))(x-x_0),x-x_0\right\rangle \notag\\
&\qquad +\lambda(1-\lambda)|\xi|^{2}(C_1|x-x_0|^{2}-C_2|x-x^{\prime}|^{2}),\notag
\end{align}
for all $x\in \Omega$.
In fact, fix $x\in \Omega$ and let $f(\lambda)=-\langle D_x^{2}\phi(x_0,Y(\lambda),X_0)(x^{\prime}-x_0),x^{\prime}-x_0\rangle$. From \eqref{eq:AWPbis}, we have $f^{\prime\prime}(\lambda)\geq C|x^{\prime}-x_0|^{2}|\xi|^{2}$.
We claim that  $f(\lambda)\leq (1-\lambda)f(0)+\lambda f(1)-C|x'-x_0|^{2}|\xi|^{2}\lambda(1-\lambda)$ for all $\lambda\in [0,1]$. To prove this claim, 
fix $\bar\lambda\in (0,1)$. 
By Taylor's theorem, we have that $f(\lambda)\geq  f(\bar\lambda)+f^{\prime}(\bar\lambda)(\lambda-\bar\lambda)+C/2\,|x'-x_0|^{2}|\xi|^{2}(\lambda-\bar\lambda)^{2}$ for all $\lambda\in [0,1]$. Applying this inequality, first for $\lambda=0$ and then for $\lambda=1$, multiplying the first by $1-\bar\lambda$ and the second by $\bar \lambda$, and then adding yields $(1-\bar\lambda)f(0)+\bar\lambda f(1)\geq f(\bar\lambda)+C|x'-x_0|^{2}|\xi|^{2}\bar \lambda(1-\bar \lambda)$ which proves the claim.

Let $g(\lambda)=\langle D_x^{2}\phi(x_0,Y(\lambda),X_0)(x^{\prime}-x_0),x^{\prime}-x_0\rangle-\langle D_x^{2}\phi(x_0,Y(\lambda),X_0)(x-x_0),x-x_0\rangle$.
Since $s_{X_0}$ is $C^2$, we have that $\left|\dfrac{d^2\, D_{ij}\phi(x_0,Y(\lambda),X_0)}{d\lambda^2}\right|\leq C\,|\xi|^2$ and therefore $|g^{\prime\prime}(\lambda)|\leq C|\xi|^{2}|x-x^{\prime}||x-x_0|$, since $|x'-x_0|\leq |x-x_0|$.
Hence $g(\lambda)\leq (1-\lambda)g(0)+\lambda g(1)+C\lambda(1-\lambda)|\xi|^{2}|x-x^{\prime}||x-x_0|$, for $0\leq \lambda \leq 1$.
Therefore we get 
\begin{align*}
&-\left\langle D_x^{2}\phi(x_0,Y(\lambda),X_0)(x-x_0),x-x_0\right\rangle
=f(\lambda)+g(\lambda)\\
&\leq (1-\lambda)(f(0)+g(0))+\lambda(f(1)+g(1))+\lambda(1-\lambda)|\xi|^{2}\left(C_1\,|x-x^{\prime}||x-x_0|-C_2\,|x^{\prime}-x_0|^{2}\right)\\
&\leq -\left\langle \left((1-\lambda)D_x^{2}\phi(x_0,\bar Y,X_0)+\lambda D_x^{2}\phi(x_0,\hat Y,X_0)\right)(x-x_0),x-x_0\right\rangle\\
&\qquad +\lambda(1-\lambda)|\xi|^{2}(C_1'|x-x^{\prime}|^{2}-C_2'|x-x_0|^{2}),
\end{align*} 
since $|x'-x_0|^2=|x-x_0|^2-|x-x'|^2$, with $C_1,C_2$ positive structural constants.
This finishes the proof of \eqref{eq:inequalityofhessians}.

We now prove \eqref{eq:minconditiontarget}.
Let $\lambda^{\prime}\in [0,1]$ to be chosen later, and let $\lambda\in[1/4,3/4]$. We have
\begin{align*}
&\min\{\phi(x,\bar Y,X_0),\phi(x,\hat Y,X_0)\}-\phi(x,Y(\lambda),X_0)\\
&\leq (1-\lambda^{\prime})\phi(x,\bar Y,X_0)+\lambda^{\prime}\phi(x,\hat Y,X_0)-\phi(x,Y(\lambda),X_0)\\
&=\left\langle ((1-\lambda^{\prime})D_x\phi(x_0,\bar Y,X_0)+\lambda^{\prime} D_x\phi(x_0,\hat Y,X_0))-D_x\phi(x_0,Y(\lambda),X_0),x-x_0\right\rangle\\
&+\frac{1}{2}\left\langle ((1-\lambda^{\prime})D_x^{2}\phi(x_0,\bar Y,X_0)+\lambda^{\prime} D_x^{2}\phi(x_0,\hat Y,X_0)-D_x^{2}\phi(x_0,Y(\lambda),X_0))(x-x_0),x-x_0\right\rangle\\
&+\frac{1}{6}\sum_{i,j,k=1}^{n}\left((1-\lambda^{\prime})D_{i,j,k}\phi(\tau,\bar Y,X_0)+\lambda^{\prime} D_{i,j,k}\phi(\tau,\hat Y,X_0)-D_{i,j,k}\phi(\tau,Y(\lambda),X_0)\right)(x_{i}-x_{0_i})(x_{j}-x_{0_j})(x_{k}-x_{0_k})
\end{align*} for some $\tau\in [x_0,x]$. Using \eqref{eq:inequalityofhessians} in $\lambda'$ we get that the last sum is less than or equal to 
\begin{align*}
&\left\langle (D\phi(x_0,Y(\lambda^{\prime}),X_0)-D\phi(x_0,Y(\lambda),X_0)),x-x_0\right\rangle\\
&+\dfrac12 \left\langle (D_x^{2}\phi(x_0,Y(\lambda^{\prime}),X_0)-D_x^{2}\phi(x_0,Y(\lambda),X_0))(x-x_0),x-x_0\right\rangle\\
&\qquad -C_1\lambda^{\prime}(1-\lambda^{\prime})|\xi|^{2}|x-x_0|^{2}+C_2\lambda^{\prime}(1-\lambda^{\prime})|\xi|^{2}|x-x^{\prime}|^{2}\\
&+\frac16 \sum_{i,j,k=1}^{n}\left((1-\lambda^{\prime})D_{i,j,k}\phi(\tau,\bar Y,X_0)+\lambda^{\prime} D_{i,j,k}\phi(\tau,\hat Y,X_0)-D_{i,j,k}\phi(\tau,Y(\lambda),X_0)\right)(x_{i}-x_{0_i})(x_{j}-x_{0_j})(x_{k}-x_{0_k}).
\end{align*} 

Notice that $\langle (D\phi(x_0,Y(\lambda^{\prime}),X_0)-D\phi(x_0,Y(\lambda),X_0)),x-x_0\rangle=(\lambda^{\prime}-\lambda)\langle\xi,x-x_0\rangle=(\lambda^{\prime}-\lambda)\langle\xi,x-x^{\prime}\rangle$ by orthogonality. 
Hence we get that
\begin{align*}&\min\{\phi(x,\bar Y,X_0),\phi(x,\hat Y,X_0)\}-\phi(x,Y(\lambda),X_0)\\
&\leq
-C_1\lambda(1-\lambda)|\xi|^{2}|x-x_0|^{2}+(\lambda^{\prime}-\lambda)\langle\xi,x-x^{\prime}\rangle\\
&+C_2\lambda(1-\lambda)|\xi|^{2}|x-x^{\prime}|^{2}+C_1|\xi|^{2}|x-x_0|^{2}(\lambda(1-\lambda)-\lambda^{\prime}(1-\lambda^{\prime}))\\
&+C_2|\xi|^{2}|x-x^{\prime}|^{2}(\lambda^{\prime}(1-\lambda^{\prime})-\lambda(1-\lambda))\\
&+\left\langle (D_x^{2}\phi(x_0,Y(\lambda^{\prime}),X_0)-D_x^{2}\phi(x_0,Y(\lambda),X_0))(x-x_0),x-x_0\right\rangle\\
&+\sum_{i,j,k=1}^{n}((1-\lambda^{\prime})D_{i,j,k}\phi(\tau,\bar Y,X_0)+\lambda^{\prime} D_{i,j,k}\phi(\tau,\hat Y,X_0)-D_{i,j,k}\phi(\tau,Y(\lambda),X_0))(x_{i}-x_{0_i})(x_{j}-x_{0_j})(x_{k}-x_{0_k}).
\end{align*} 
We will now choose $\lambda^{\prime}$ and estimate each of the terms.
We have that 
\begin{align*}
A&:=(\lambda^{\prime}-\lambda)\langle\xi,x-x^{\prime}\rangle+C_2\lambda(1-\lambda)|\xi|^{2}|x-x^{\prime}|^{2}
=\left\langle x-x^{\prime},(\lambda^{\prime}-\lambda)\xi+C_2\lambda(1-\lambda)|\xi|^{2}(x-x^{\prime})\right\rangle\\
&=\left\langle x-x^{\prime},|\xi|\left((\lambda^{\prime}-\lambda)\frac{\xi}{|\xi|}+C_2\lambda(1-\lambda)|\xi|(x-x^{\prime})\right)\right\rangle.
\end{align*} 
Notice that either $\dfrac{x-x^{\prime}}{|x-x^{\prime}|}=\dfrac{\xi}{|\xi|}$ or $\dfrac{x-x^{\prime}}{|x-x^{\prime}|}=-\dfrac{\xi}{|\xi|}$.
If the $+$ sign holds, we choose $\lambda^{\prime}=\lambda-C_2\lambda(1-\lambda)|\xi||x-x^{\prime}|$, 
and if the $-$ sign holds, we choose $\lambda^{\prime}=\lambda+C_2\lambda(1-\lambda)|\xi||x-x^{\prime}|$, so in either case $A=0$.
Notice that since $1/4\leq \lambda \leq 3/4$, we have from Lemma \ref{lm:estimatesofpointsintarget} that 
$|\lambda^{\prime}-\lambda|\leq \frac{C_2}{16}|\xi||x-x^{\prime}|\leq \frac{C_3}{16}|\bar Y-\hat Y||x-x_0|\leq C|x-x_0|\leq \frac{1}{4}$, if $|x-x_0|<1/4C$.  Hence $\lambda^{\prime}\in [0,1]$. 

We next estimate the remaining terms.
We have $C_1|\xi|^{2}|x-x_0|^{2}(\lambda(1-\lambda)-\lambda^{\prime}(1-\lambda^{\prime}))\leq 3C_1|\xi|^{2}|x-x_0|^{2}|\lambda^{\prime}-\lambda|\leq C|\bar Y-\hat Y|^{3}|x-x_0|^{3}$, again by Lemma \ref{lm:estimatesofpointsintarget}.
Similarly,
$C_2|\xi|^{2}|x-x^{\prime}|^{2}(\lambda^{\prime}(1-\lambda^{\prime})-\lambda(1-\lambda))\leq C|\bar Y-\hat Y|^{3}|x-x_0|^{3}$. 

We also have from the estimates at the end of Subsection \ref{subsec:estimatesderivativesofphi} that
\begin{align*}
&\left\langle (D_x^{2}\phi(x_0,Y(\lambda^{\prime}),X_0)-D_x^{2}\phi(x_0,Y(\lambda),X_0))(x-x_0),x-x_0\right\rangle\\&\leq C|Y(\lambda^{\prime})-Y(\lambda)|\,|x-x_0|^{2}
\leq C|v(\lambda^{\prime})-v(\lambda)||x-x_0|^{2}\\
&=|\lambda^{\prime}-\lambda|\,|\bar v-\hat v|\,|x-x_0|^{2}\leq C|\lambda^{\prime}-\lambda|\,|\bar Y-\hat Y|\,|x-x_0|^{2}\leq C|\bar Y-\hat Y|^{2}|x-x_0|^{3}.
\end{align*} 
 
To estimate the cubic form, let $h(\lambda)=D_{i,j,k}\phi(\tau,Y(\lambda),X_0)$. 
As in the estimate of $g''$ above, we have $|h^{\prime\prime}(\lambda)|\leq C|\xi|^{2}$.   
We have 
\begin{align*}
&\left|((1-\lambda^{\prime})D_{i,j,k}\phi(\tau,\bar Y,X_0)+\lambda^{\prime} D_{i,j,k}\phi(\tau,\hat Y,X_0)-D_{i,j,k}\phi(\tau,Y(\lambda),X_0))(x_{i}-x_{0_i})(x_{j}-x_{0_j})(x_{k}-x_{0_k})\right|\\
&\leq C|x-x_0|^{3}|(1-\lambda^{\prime})h(0)+\lambda^{\prime}h(1)-h(\lambda)|\\
&\leq C|x-x_0|^{3}|(1-\lambda^{\prime})h(0)+\lambda^{\prime}h(1)-h(\lambda^{\prime})|+C|x-x_0|^{3}|h(\lambda^{\prime})-h(\lambda)|\\
&\leq C|x-x_0|^{3}|\xi|^{2}+C|x-x_0|^{3}|\lambda^{\prime}-\lambda|\,|\xi|
\leq C|x-x_0|^{3}|\bar Y-\hat Y|^{2} +C|x-x_0|^{4}|\bar Y-\hat Y|^2.
\end{align*}

Combining all these estimates we obtain 
\begin{align*}
&\min\{\phi(x,\bar Y,X_0),\phi(x,\hat Y,X_0)\}-\phi(x,Y(\lambda),X_0)\\
&\leq -C_1^{\prime}|\bar Y-\hat Y|^{2}|x-x_0|^{2}+C(|\bar Y-\hat Y|^{3}|x-x_0|^{3}+|\bar Y-\hat Y|^{2}|x-x_0|^{3}+|\bar Y-\hat Y|^{2}|x-x_0|^{4}),
\end{align*} 
where $C_1'$ and $C$ are structural constants.  To obtain our desired estimate we write  
\begin{align*}
&C\left(|\bar Y-\hat Y|^{3}|x-x_0|^{3}+|\bar Y-\hat Y|^{2}|x-x_0|^{3}+|\bar Y-\hat Y|^{2}|x-x_0|^{4}\right)\\
&=C|x-x_0|^{2}|\bar Y-\hat Y|^{2}\left(|\bar Y-\hat Y||x-x_0|+|x-x_0|+|x-x_0|^{2}\right)\\
&\leq \frac{C_1^{\prime}}{2}|x-x_0|^{2}|\bar Y-\hat Y|^{2},
\end{align*} 
provided we choose $|x-x_0|\leq C_2'$, with $C_2'$ sufficiently small depending only on the structure.
Taking $\sigma=\min\{1/4C, C_2'\}$, the first part of the lemma follows.

We finally show that \eqref{eq:minconditiontarget} implies \eqref{eq:AWP}.
Fix $X_0\in C_{\Omega}$, $Y_0\in\Sigma$ and let $\xi$ and $\eta$ be perpendicular vectors in $R^{n}$. Set $v_0=D\phi(x_0,Y_0,X_0)$ and hence we can write $Y_0=X_0+s_{X_0}(\Lambda(v_0))\Lambda(v_0)$. Let $Y_{\epsilon}\in\Sigma$ be given by $Y_{\epsilon}=X_0+s(\Lambda(v_0+\epsilon\xi))\Lambda(v_0+\epsilon\xi))$ and notice that $Y_0\in [Y_{-\epsilon},Y_{\epsilon}]_{X_0}$ for the value $\lambda=\frac{1}{2}$ and for all $\epsilon>0$. Therefore from \eqref{eq:minconditiontarget} we have $\phi(x,Y_0,X_0)\geq \min\{\phi(x,Y_{-\epsilon},X_0),\phi(x,Y_{\epsilon},X_0)\}+C_1\epsilon^{2}|\xi|^{2}|x-x_0|^{2}$ for $|x-x_0|\leq C_2$.

Let $S_{\epsilon}=\{x\in\Omega:\phi(x,Y_{-\epsilon},X_0)=\phi(x,Y_{\epsilon},X_0)\}$. Notice that $D\phi(x_0,Y_{\epsilon},X_0)-D\phi(x_0,Y_{-\epsilon},X_0)=2\epsilon\xi$ is a normal vector to $S_{\epsilon}$ at $x_0$.

Let $\gamma$ be a curve contained in $S_{\epsilon}$ such that $\gamma(0)=x_0$ and $\gamma^{\prime}(0)=\eta$. We then have that $\phi(\gamma(t),Y_0,X_0)\geq \frac{1}{2}\phi(\gamma(t),Y_{-\epsilon},X_0)+\frac{1}{2}\phi(\gamma(t),Y_{\epsilon},X_0)\}+C_1\epsilon^{2}|\xi|^{2}|\gamma(t)-x_0|^{2}$ for all $|t|$ small enough.

Let $g(t)=\phi(\gamma(t),Y_0,X_0)-\frac{1}{2}\phi(\gamma(t),Y_{-\epsilon},X_0)-\frac{1}{2}\phi(\gamma(t),Y_{\epsilon},X_0)\}-C_1\epsilon^{2}|\xi|^{2}|\gamma(t)-x_0|^{2}$.

We have $g^{\prime}(0)=0$ and $g^{\prime\prime}(0)\geq 0$ and that is 
\[
\left\langle ( D_x^{2}\phi(x_0,Y_0,X_0)-\frac{1}{2}D_x^{2}\phi(x_0,Y_{-\epsilon},X_0)-\frac{1}{2}D_x^{2}\phi(x_0,Y_{\epsilon},X_0)-2C_1\epsilon^{2}|\xi|^{2}\,Id)\eta,\eta\right\rangle\geq 0
\]
and this inequality holds for all $\epsilon$ small enough.
Letting $h(\epsilon)=\langle (D_x^{2}\phi(x_0,Y_{\epsilon},X_0))\eta,\eta \rangle$, we get that $h(0)-\frac12 h(-\epsilon)-\frac12 h(\epsilon)\geq 2C_1\epsilon^{2}|\xi|^{2}|\eta|^{2}$, we get that $h^{\prime\prime}(0)\leq -4C_1|\xi|^{2}|\eta|^{2}$. 
Therefore we obtain that
 $\dfrac{d^{2}}{d\epsilon^{2}}\left. \left\langle D_x^{2}\phi(x_0,Y_\epsilon,X_0)\eta,\eta\right\rangle \right|_{\epsilon=0}\leq -C|\xi|^{2}|\eta|^{2}$.
 \end{proof}
 
 \begin{remark}\label{rmk:localequivalenceperpvectors}\rm
 
 The local version of \eqref{eq:AWP} can be stated as follows: The target $\Sigma$ is regular from $X_0\in C_{\Omega}$ if there exists a neighborhood $U_{X_0}$ and a constant $C$ depending on $X_0$ such that for all $Y_0\in\Sigma$ and for all $Z\in U_{X_0}$ and for all vectors $\xi$ and $\eta$ such that $\xi\perp \eta$ we have 
\begin{equation}\label{eq:AWPL}
\dfrac{d^{2}}{d\epsilon^{2}}\left. \left\langle D_x^{2}\phi(z,Y_\epsilon,Z)\eta,\eta\right\rangle \right|_{\epsilon=0}\leq -C|\xi|^{2}|\eta|^{2},
\end{equation}
where $Y_{\epsilon}=Z+s(\Lambda(v+\epsilon\xi))\Lambda(v+\epsilon\xi)$ and $v=D\phi(z,Y_0,Z)$.

Following the proof of Theorem \ref{thm:awforperpendicularvectors}, one can show that \eqref{eq:AWPL} is equivalent to \eqref{eq:convexityconditiononthetargetbisbis}.
\end{remark}

\section{Local and global refractors}\label{subsec:localandglobalrefractors}

\subsection{Refractors}
Let $u:\Omega\to [0,M]$,
and assume that the convex hull of $\Sigma\subset \mathcal T$, and $\Omega$ is connected. Given $x_0\in \Omega$, set $X_0=(x_0,u(x_0))$.
We define 
\begin{equation}\label{eq:definitionofrefractormapping}
F_u(x_0)=\{Y\in \Sigma: \text{$u(x)\leq \phi(x,Y,X_0)$ for all $x\in\Omega$}\}.
\end{equation}
The function $u$ is a {\it parallel refractor} if $F_u(x_0)\neq \emptyset$ for all $x_0\in \Omega$.

We notice that, from the estimates of the derivatives $\partial_{x_i}\phi$ from Subsection 
\ref{subsec:estimatesderivativesofphi}, any refractor is a Lipschitz function in $\Omega$ with a Lipschitz constant 
depending only on $\delta$ in \eqref{eq:definitionofregionT}. 



Suppose that $u$ is a parallel refractor in $\Omega$. In general, a local supporting ellipsoid might not support the refractor in all of $\Omega$.
For example, rotating the Figure 
\ref{fig:localnonglobal} around the $z$-axis,
\begin{figure}[htp]
\begin{center}
   \includegraphics[width=6in]{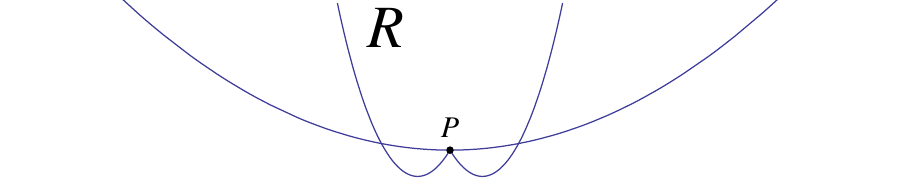}
\end{center}
  \caption{The refractor ${\it R}$ is composed of the minimum of the ellipsoids given by $\phi(x,Y_1,P)$, and $\phi(x,Y_2,P)$, with $Y_1=(.03,5),Y_2=(-.03,5)$ and $P=(0,4.70456)$. The ellipsoid given by $\phi(x,Y_3,P)$ with $Y_3=(0,10)$, supports $\it R$ at $P$ locally but not globally; $\kappa=2/3$.}
  \label{fig:localnonglobal}
\end{figure}
we obtain a refractor in 3d that has a local supporting ellipsoid that is not global.
In this case, the target is composed of the circle $x^2+y^2=(.03)^2$ with $z=0$, and the point $(0,0,10)$.

The purpose of this section is to see that under condition \eqref{eq:AWPLnew} below, a local supporting ellipsoid is also global. 
This will be used later in the proof of Theorem \ref{thm:mainestimate}.

The target $\Sigma$ satisfies the condition AW from $X_0\in C_{\Omega}$ if for all $Y_0\in\Sigma$, and for all vectors $\xi$ and $\eta$ such that $\xi\perp \eta$ we have 
\begin{equation}\label{eq:AWPLnew}
\dfrac{d^{2}}{d\epsilon^{2}}\left. \left\langle D_x^{2}\phi(x_0,Y_\epsilon,X_0)\eta,\eta\right\rangle \right|_{\epsilon=0}\leq 0,
\end{equation}
where $Y_{\epsilon}=X_0+s(\Lambda(v+\epsilon\xi))\Lambda(v+\epsilon\xi)$ and $v=D\phi(x_0,Y_0,X_0)$.
Clearly \eqref{eq:AWPL} implies  \eqref{eq:AWPLnew}.
We will show in Proposition \ref{prop:global} that \eqref{eq:AWPLnew} implies that for all $\bar Y,\hat Y\in \Sigma$
we have
\begin{equation}\label{eq:convexityconditiononthetargetbisbisbis}
\phi(x,Y_{X_0}(\lambda),X_0)\geq\min\left\{\phi(x,\bar Y,X_0),\phi(x,\hat Y,X_0)\right\}\end{equation}
for all $x\in \Omega$, and $0\leq \lambda \leq 1$, with $Y_{X_0}(\lambda)=X_0+s_{Z}(\Lambda(v(\lambda)))\,\Lambda(v(\lambda))$. 
Here $\bar v=D_x\phi(x_0,\bar Y,X_0)$, $\hat v=D_x\phi(x_0,\hat Y,X_0)$,
 and $v(\lambda)=(1-\lambda)\bar v +\lambda \hat v$. 

Let 
\begin{equation}\label{eq:definitionofHvX}
H(v,X):=s_X(\Lambda(v))Q(v),
\end{equation} see \eqref{eq:defofLambdav} and Subsection \ref{subsec:assumptiontargetrefractor}. 
Condition \eqref{eq:AWPLnew} means 
that for all $\eta,\xi\in \R^n$, with $\eta\perp \xi$, we have 
\begin{equation}\label{eq:AWintermsofsecondderivativesxv}
D_{v_l,v_k}(D_{x_i,x_j}\phi(x,Y(v),X)\eta_i\eta_j\xi_k\xi_l\leq 0,
\end{equation}
with $X=X_0=(x_0,x_{n+1}^0)$.
We are going to rewrite this condition in terms of the function $H$.
We will show that this is equivalent to 
\begin{equation}\label{eq:1/Hisconcave}
\langle D_v^{2}(1/H(v,X))\,\xi,\xi\rangle\leq 0,\qquad  \text{for all $\xi\in R^{n}$}.
\end{equation}

Recall that from Subsection \ref{subsec:estimatesderivativesofphi} we have that $D_{x_i}\phi(x,Y,X)=\dfrac{x_i-y_i}{y_{n+1}-x_{n+1}-k|X-Y|}$, where $x_{n+1}=\phi(x,Y,X)$.
Let us set
\[
J(Y,X,\eta):=\langle D_x^{2}\phi(x,Y,X)\eta,\eta\rangle=\sum_{i,j=1}^{n}D_{x_i,x_j}\phi(x,Y,X)\eta_i\eta_j.
\]
From \eqref{eq:formulasecondderivativesofphi} and \eqref{eq:formulaforsquareroot} we have
$$D_{x_i,x_j}\phi(x,Y,X)=\delta_{ij}\left(y_{n+1}-x_{n+1}-\kappa\,|X-Y|\right)^{-1}+(1-\kappa^{2})\left(y_{n+1}-x_{n+1}-\kappa |X-Y|\right)^{-3}(x_i-y_i)(x_j-y_j).$$ 
%
Therefore, for $|\eta|=1$ we get
\begin{align*}
J(Y,X,\eta)
=\left(y_{n+1}-x_{n+1}-\kappa |X-Y|\right)^{-1}+(1-\kappa^{2})\left(y_{n+1}-x_{n+1}-\kappa |X-Y|\right)^{-3}\,\langle x-y,\eta\rangle^{2}.
\end{align*}
From \eqref{eq:defofLambdav}, $\Lambda(v)=(-Q(v)v,Q(v)+\kappa)$. Then
 $$Y_\epsilon-X=s_X(\Lambda(v))\,\left(-Q(v)v,Q(v)+\kappa \right).$$
Therefore, $J(Y_\epsilon,X,\eta)=\dfrac{1+(1-\kappa^{2})\,\langle v,\eta\rangle^{2}}{s_X(\Lambda (v))\,Q(v)}:=F(v,X,\eta)$.
We will show that $\dfrac{d^{2}}{d\epsilon^{2}}F(v+\epsilon\xi,X,\eta)|_{\epsilon=0}\leq 0$. 
Calculating the second derivative with respect to $\epsilon$, we have that the last inequality is equivalent to 
\begin{equation}\label{eq:AWbisbis}
\dfrac{d^{2}}{d\epsilon^{2}}F(v+\epsilon\xi,X,\eta)|_{\epsilon=0}=\sum_{k,\ell=1}^{n}D_{v_k,v_\ell}F(v,X,\eta)\xi_k\xi_\ell\leq 0
\end{equation} 
for all vectors $\xi\perp \eta$.

If we set $G=1/H$, then $F(v,X,\eta)=G(v,X)\left(1+(1-\kappa^{2})\,\langle v,\eta\rangle^{2}\right)$.
A calculation gives that 
\begin{align*}
\sum_{k,\ell=1}^{n}D_{v_k,v_\ell}F(v,X,\eta)\xi_k\xi_\ell
&=(1+(1-\kappa^{2})\,\langle v,\eta\rangle^{2})\langle D^{2}G(v,X)\xi,\xi\rangle\\
&\quad +4(1-\kappa^{2})\langle v,\eta\rangle\,\langle\xi,\eta\rangle\,\langle DG(v,X),\xi\rangle
+2(1-\kappa^{2})\,\langle\eta,\xi\rangle^{2}\,G(v,X)\\
&=(1+(1-\kappa^{2})\,\langle v,\eta\rangle^{2})\langle D^{2}G(v,X)\xi,\xi\rangle\\
\end{align*}
since $\xi\perp \eta$.

Therefore we have shown that condition \eqref{eq:AWPLnew} is equivalent to \eqref{eq:1/Hisconcave}.

For simplicity in the notation we assume that $X_0=0$ and consider the solid ellipsoids 
\[
E(Y)=\{X\in \R^{n+1}:c(X,Y)\leq c(0,Y)\},
\]
where $c(X,Y)$ is defined by \eqref{eq:definitionofc(X,Y)}.
From Subsection \ref{subsec:assumptiontargetrefractor}, we recall  that the target $\Sigma$ is given parametrically from the origin by 
\[
Y=s(\Lambda(v))\Lambda(v),
\] 
where $\Lambda(v)=(-Q(v)v,Q(v)+\kappa)$ and $|\Lambda(v)|=1$, with $Q(v)$ given in \eqref{eq:defofLambdav}.
\begin{proposition}\label{prop:global}
Suppose \eqref{eq:AWPLnew} holds from $X_0$ (assumed for simplicity 0).
Let $\bar Y,\hat Y\in\Sigma$ be  given by $\bar Y=s(\Lambda(\bar v))\Lambda(\bar v)$ and $\hat Y=s(\Lambda(\hat v))\Lambda(\hat v)$, and let $v_{\lambda}=(1-\lambda)\bar v+\lambda\hat v$ for some $\lambda\in (0,1)$, and let $Y_{\lambda}=s(\Lambda(v_{\lambda}))\Lambda( v_{\lambda})$. Set $H(v,0)=H(v)$, given in \eqref{eq:definitionofHvX}.
Then
\begin{equation}\label{eq:concavityof1/H}
\frac{1}{H(v_{\lambda})}\geq (1-\lambda)\frac{1}{H(\bar v)}+\lambda\frac{1}{H(\hat v)},
\end{equation}
and
\[
E(Y_{\lambda})\subseteq E(\bar Y)\cup E(\hat Y),
\]
and in particular,
$\phi(x,Y_{\lambda},X_0)\geq\min\{\phi(x,\bar Y,X_0),\phi(x,\hat Y,X_0)\}$ for all $x$ in their common domain (in particular for $x\in \Omega$).
\end{proposition}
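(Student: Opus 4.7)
The key input is the equivalence between \eqref{eq:AWPLnew} and \eqref{eq:1/Hisconcave} that has just been established in the discussion immediately preceding the proposition. Specialized to $X = X_0 = 0$, it tells us that the map $v \mapsto 1/H(v)$ is concave on the convex hull of the admissible parameter set. Applying this concavity to the affine combination $v_\lambda = (1-\lambda)\bar v + \lambda \hat v$ for $\lambda \in (0,1)$ yields \eqref{eq:concavityof1/H} at once.

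For the inclusion $E(Y_\lambda) \subseteq E(\bar Y) \cup E(\hat Y)$, the plan is to recast the membership ``$X \in E(Y(v))$'' as a scalar inequality whose $v$-dependence splits as (affine)$/H(v)$. Using $Y(v) = s(\Lambda(v))\Lambda(v)$ with $|\Lambda(v)| = 1$ and $\Lambda(v) = (-Q(v)v, Q(v)+\kappa)$, a short computation gives
\[
X \cdot Y(v) - \kappa\, x_{n+1}\, |Y(v)| = s(\Lambda(v))\,Q(v)\,(x_{n+1} - x\cdot v) = H(v)\, B(v),
\]
where $B(v) := x_{n+1} - x\cdot v$ is affine in $v$. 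Squaring the defining inequality $|X - Y(v)| \leq |Y(v)| - \kappa\, x_{n+1}$ (the sign condition $|Y(v)| - \kappa\, x_{n+1} \geq 0$ will be automatic whenever $X \neq 0$, via a short triangle-inequality argument using $\kappa < 1$) produces the clean equivalence
\[
X \in E(Y(v)) \iff A \leq 2\, H(v)\, B(v), \qquad A := |x|^{2} + (1-\kappa^{2})\, x_{n+1}^{2}.
\]

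Now suppose $X \in E(Y_\lambda)$. If $X = 0$, then $0 \in \partial E(Y)$ for every $Y \in \Sigma$ and the inclusion is immediate. Otherwise $A > 0$ and membership reads $1/H(v_\lambda) \leq 2\,B(v_\lambda)/A$. Since $B$ is affine in $v$, the right side equals $(1-\lambda)\cdot 2 B(\bar v)/A + \lambda\cdot 2 B(\hat v)/A$, while the concavity of $1/H$ bounds the left side below by $(1-\lambda)/H(\bar v) + \lambda/H(\hat v)$. Hence
\[
(1-\lambda)\left[\frac{1}{H(\bar v)} - \frac{2 B(\bar v)}{A}\right] + \lambda\left[\frac{1}{H(\hat v)} - \frac{2 B(\hat v)}{A}\right] \leq 0,
\]
forcing at least one bracket to be nonpositive, i.e., $X \in E(\bar Y)$ or $X \in E(\hat Y)$. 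For the final $\phi$-inequality, I will apply this inclusion to the point $(x, \phi(x, Y_\lambda, X_0)) \in \partial E(Y_\lambda)$: it lies in $E(\bar Y)$ or in $E(\hat Y)$, and since any point of a solid ellipsoid has $x_{n+1}$-coordinate above its lower graph $\phi(\cdot, Y, X_0)$, the asserted minimum bound follows on $\Omega$ (which sits inside every relevant projection because $\Sigma \subset \mathcal T$). The most delicate step is the algebraic reformulation of membership in the second paragraph, together with the automatic verification of the sign condition for $v = \bar v$ or $v = \hat v$; once that is in hand the argument reduces to elementary convexity.
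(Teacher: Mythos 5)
Your argument is correct, and for the set inclusion it takes a genuinely different route from the paper. Both proofs begin identically: \eqref{eq:concavityof1/H} is read off from the pointwise concavity \eqref{eq:1/Hisconcave}, already shown equivalent to \eqref{eq:AWPLnew}. For the inclusion $E(Y_{\lambda})\subseteq E(\bar Y)\cup E(\hat Y)$, the paper works geometrically: it locates the boundary intersections $\text{\it bdry}\,E(\hat Y)\cap\text{\it bdry}\,E(Y_{\lambda})$ and $\text{\it bdry}\,E(\bar Y)\cap\text{\it bdry}\,E(Y_{\lambda})$ on hyperplanes through the origin with normals $\hat\eta,\bar\eta$, proves two half-space inclusion claims, and then handles the remaining region by expressing the normal $N_{\lambda}=(v_{\lambda},-1)$ as a combination of $\bar\eta$ and $\hat\eta$ with nonpositive coefficients --- a computation that needs the concavity of $1/H$ plus a case analysis on whether $H(v_{\lambda})$ coincides with $H(\bar v)$ or $H(\hat v)$. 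You instead rewrite membership $X\in E(Y(v))$ as the scalar inequality $A\leq 2H(v)B(v)$ with $A=|x|^{2}+(1-\kappa^{2})x_{n+1}^{2}$ and $B(v)=x_{n+1}-x\cdot v$ affine in $v$ (your identity $\langle X,Y(v)\rangle-\kappa x_{n+1}|Y(v)|=H(v)B(v)$ checks out, and in fact $\langle X,\hat\eta\rangle=H(\hat v)B(\hat v)-H(v_{\lambda})B(v_{\lambda})$, so your quantities refine the paper's), divide by $AH(v)>0$, and obtain the union inclusion from a single weighted-average comparison between the concave function $1/H$ and the affine function $2B(\cdot)/A$; no case analysis is needed. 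The delicate points you flag --- the automatic sign condition $|Y|-\kappa x_{n+1}\geq 0$ once the squared inequality holds (exactly the paper's completing-the-square step in its Claim 1, using $0<\kappa<1$), the degenerate case $X=0$, and the passage from the solid-ellipsoid inclusion to the inequality between the lower graphs $\phi(\cdot,Y,X_0)$ --- are all handled at least as carefully as in the paper. The paper's proof makes the supporting geometry explicit; yours is shorter, avoids the degenerate subcases, and isolates the role of \eqref{eq:concavityof1/H} more transparently.
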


\begin{proof}
Inequality \eqref{eq:concavityof1/H} follows from \eqref{eq:1/Hisconcave} which is equivalent to \eqref{eq:AWPLnew}.

We first notice that the set $\text{\it bdry} E(\hat Y)\cap\text{\it bdry} E(Y_{\lambda})$ is contained on a hyperplane $\hat T$. Indeed, for $X\in \text{\it bdry} E(\hat Y)\cap\text{\it bdry} E(Y_{\lambda})$ we have that  
$c(X,\hat Y)=c(0,\hat Y)$, so $|X-\hat Y|^{2}=(|\hat Y|-\kappa x_{n+1})^{2}$ which gives that 
$|X|^2-2\langle X,\hat Y\rangle+2\kappa x_{n+1}|\hat Y|-\kappa^{2}x_{n+1}^{2}=0$. Also $c(X,Y_{\lambda})=c(0,Y_{\lambda})$, and so $|X|^2-2\langle X, Y_{\lambda}\rangle+2\kappa x_{n+1}|Y_{\lambda}|-\kappa^{2}x_{n+1}^{2}=0$.
Subtracting these identities yields 
$\langle X,\hat\eta\rangle=0$ where $\hat\eta=\hat Y-\kappa|\hat Y|e_{n+1}-(Y_{\lambda}-\kappa|Y_{\lambda}|e_{n+1})$, and so 
$\text{\it bdry} E(\hat Y)\cap\text{\it bdry} E(Y_{\lambda})\subseteq \hat T:=\{X:\langle X,\hat\eta\rangle=0\}$.
In the same way, $\text{\it bdry} E(\bar Y)\cap\text{\it bdry} E(Y_{\lambda})\subseteq \bar T$, where $\bar T=\{X:\langle X,\bar\eta\rangle=0\}$ and 
$\bar \eta=\bar Y-\kappa|\bar Y|e_{n+1}-(Y_{\lambda}-\kappa|Y_{\lambda}|e_{n+1})$.

From \eqref{eq:defofLambdav} and the definition of $H$, we can write 
$\hat\eta=\left(H(v_{\lambda})\,v_{\lambda}- H(\hat v)\,\hat v,H(\hat v)-H(v_{\lambda})\right)$ and 
$\bar\eta=\left(H(v_{\lambda})\,v_{\lambda}- H(\bar v)\,\bar v,H(\bar v)-H(v_{\lambda})\right)$.

The proposition will follow from the following claims:

{\bf Claim 1}: If $\langle X,\hat\eta\rangle\geq 0$ and $X\in E(Y_{\lambda})$, then $X\in E(\hat Y)$.

{\bf Claim 2}: If $\langle X,\bar\eta\rangle\geq 0$ and $X\in E(Y_{\lambda})$, then $X\in E(\bar Y)$.

{\bf Claim 3}: If $\langle X,\hat\eta\rangle< 0$ and $\langle X,\bar\eta\rangle< 0$, then $X\notin E(Y_{\lambda})$.

Only the proof of the third claim uses condition AW, i.e., \eqref{eq:concavityof1/H}.

We prove Claim 1.
Let $\langle X,\hat\eta\rangle\geq 0$ and $X\in E(Y_{\lambda})$. 
Since $X\in E(Y_{\lambda})$, we have $|X|^2-2\langle X, Y_{\lambda}\rangle+2\kappa x_{n+1}|Y_{\lambda}|-\kappa^{2}x_{n+1}^{2}\leq 0$; and since $\langle X,\hat\eta\rangle\geq 0$ we have $\langle X,Y_{\lambda}-\hat Y\rangle-\kappa x_{n+1}(|Y_{\lambda}|-|\hat Y|)\leq 0$.
Therefore,
\begin{align*}
&|X|^2-2\langle X,\hat Y\rangle+2\kappa x_{n+1}|\hat Y|-\kappa^{2}x_{n+1}^{2}\\
&=|X|^2-2\langle X, Y_{\lambda}\rangle+2\kappa x_{n+1}|Y_{\lambda}|-\kappa^{2}x_{n+1}^{2} +2\langle X,Y_{\lambda}-\hat Y\rangle-2\kappa x_{n+1}(|Y_{\lambda}|-|\hat Y|)\leq 0.
\end{align*}
It follows that $|X-\hat Y|^{2}\leq (|\hat Y|-\kappa x_{n+1})^{2}$. In this inequality, writing $X=(x,x_{n+1}),
\hat Y=(\hat y, \hat y_{n+1})$, and completing the squares we obtain, since $0<\kappa<1$, that $|\hat Y|-\kappa x_{n+1}\geq 0$ and hence $|X-\hat Y|\leq |\hat Y|-\kappa x_{n+1}$, which means $X\in E(\hat Y)$.

The proof of Claim 2 is exactly the same.

We now prove Claim 3.

Assume that $\langle X,\hat\eta\rangle< 0$ and $\langle X,\bar\eta\rangle< 0$.
Notice that $E(Y_{\lambda})\setminus{0}\subseteq \{X:\langle X,N_{\lambda}\rangle<0\}$, where $N_{\lambda}=(v_{\lambda},-1)$ and hence, it is enough to show that $\langle X,N_{\lambda}\rangle\geq 0$.

We first assume that $H(v_{\lambda})-H(\bar v)\neq 0$ and $H(v_{\lambda})-H(\hat v)\neq 0$.
We will show that we can write 
\[
N_{\lambda}=\frac{(1-t)}{H(v_{\lambda})-H(\bar v)}\bar\eta+\frac{t}{H(v_{\lambda})-H(\hat v)}\hat\eta,
\] 
with 
$\dfrac{(1-t)}{H(v_{\lambda})-H(\bar v)}\leq 0$ and $\dfrac{t}{H(v_{\lambda})-H(\hat v)}\leq 0$, for some $t$.
The above equality holds if and only if 
\[
\frac{(1-t)\left(H(v_{\lambda})\,v_{\lambda}- H(\bar v)\,\bar v\right) }{H(v_{\lambda})-H(\bar v)}+\frac{t\left(H(v_{\lambda})\,v_{\lambda}- H(\hat v)\,\hat v\right) }{H(v_{\lambda})-H(\hat v)}=v_{\lambda},
\]
which holds if and only if 
\[
(1-t)\left(v_{\lambda}+\frac{H(\bar v)\,(v_{\lambda}-\bar v)}{H(v_{\lambda})-H(\bar v)}\right)+t\left(v_{\lambda}+\frac{H(\hat v)\,(v_{\lambda}-\hat v)}{H(v_{\lambda})-H(\hat v)}\right)=v_{\lambda},
\]
which is true if and only if 
\[
(\hat v-\bar v)\left\{\frac{(1-t)\lambda H(\bar v)}{H(v_{\lambda})-H(\bar v)}-\frac{t(1-\lambda)H(\hat v)}{H(v_{\lambda})-H(\hat v)}\right\}=0.
\]
Therefore we choose $t$ such that 
$\dfrac{(1-t)\lambda H(\bar v)}{H(v_{\lambda})-H(\bar v)}=\dfrac{t(1-\lambda)H(\hat v)}{H(v_{\lambda})-H(\hat v)}$.
Since $Q(v)>0$, we have that $\lambda H(\bar v)>0$ and $(1-\lambda)H(\hat v)>0$. Then it follows that $\dfrac{(1-t)}{H(v_{\lambda})-H(\bar v)}$ and $\dfrac{t}{H(v_{\lambda})-H(\hat v)}$ have both the same sign.
From the last identity containing $t$ we obtain 
\[
\lambda H(\bar v)=\frac{t}{H(v_{\lambda})-H(\hat v)}\left\{H(v_{\lambda})\left((1-\lambda)H(\hat v)+\lambda H(\bar v)\right)-H(\bar v)H(\hat v)\right\}.
\]
From \eqref{eq:concavityof1/H} we get 
\[
H(v_{\lambda})\left((1-\lambda)H(\hat v)+\lambda H(\bar v)\right)-H(\bar v)H(\hat v)\leq 0,
\] 
hence 
$
\dfrac{t}{H(v_{\lambda})-H(\hat v)}\leq 0$, and therefore also $\dfrac{(1-t)}{H(v_{\lambda})-H(\bar v)}\leq 0.
$

Next we assume that $H(v_{\lambda})-H(\hat v)=0$ and $H(v_{\lambda})-H(\bar v)\neq 0$. From \eqref{eq:concavityof1/H}, this implies that $H(v_{\lambda})<H(\bar v)$.
If we write $N_{\lambda}=\dfrac{1}{H(v_{\lambda})-H(\bar v)}\bar\eta+t\,\hat\eta$, then 
$t=\dfrac{\lambda H(\bar v)}{(H(v_{\lambda})-H(\bar v))(1-\lambda)H(\hat v)}$, and so $t<0$.

Finally, if $H(v_{\lambda})=H(\hat v)=H(\bar v)$, then $\bar \eta=\lambda H(\bar v)(\hat v-\bar v,0)$, and $\hat \eta=(1-\lambda) H(\bar v)(\bar v-\hat v,0)$. So in this case, both inequalities $\langle X,\hat\eta\rangle< 0$ and $\langle X,\bar\eta\rangle< 0$ are impossible.


This proves Claim 3 and hence the proof of the proposition is complete.

\end{proof}

\begin{proposition}\label{prop:localimpliesglobal}
Suppose that $u$ is a parallel refractor in $\Omega$, $x_0\in \Omega$, and assume that \eqref{eq:AWPLnew} holds from $X_0=(x_0,u(x_0))$.  
If there exist $Y_0\in \Sigma$ and $\epsilon>0$ such $u(x)\leq \phi(x,Y_0,X_0)$ for all $x\in B_\epsilon(x_0)$, then $u(x)\leq \phi(x,Y_0,X_0)$ for all $x\in \Omega$. 
\end{proposition}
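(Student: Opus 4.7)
The plan is a contradiction argument using a $c$-segment in $\Sigma$ joining $Y_0$ to some known global support, together with Proposition~\ref{prop:global}. Since $u$ is a parallel refractor, $F_u(x_0)\ne\emptyset$; pick $Y^{\star}\in F_u(x_0)$, so $\phi(\cdot,Y^{\star},X_0)\ge u$ on $\Omega$. Using the parametrization of $\Sigma$ from Subsection~\ref{subsec:assumptiontargetrefractor}, form the $c$-segment at $X_0$ connecting $Y_0$ (at $\lambda=0$) to $Y^{\star}$ (at $\lambda=1$): with $v_0=D_x\phi(x_0,Y_0,X_0)$, $v^{\star}=D_x\phi(x_0,Y^{\star},X_0)$ and $v(\lambda)=(1-\lambda)v_0+\lambda v^{\star}$, set
\[
Y_\lambda=X_0+s_{X_0}(\Lambda(v(\lambda)))\,\Lambda(v(\lambda))\in\Sigma.
\]
Hypothesis \eqref{eq:AWPLnew} at $X_0$ is precisely what Proposition~\ref{prop:global} requires, so
\[
\phi(x,Y_\lambda,X_0)\ \ge\ \min\bigl\{\phi(x,Y_0,X_0),\phi(x,Y^{\star},X_0)\bigr\}\qquad\text{for all }x\in\Omega,\ \lambda\in[0,1].
\]
Since both terms on the right dominate $u$ on $B_\epsilon(x_0)$ (the first by the local hypothesis, the second because $Y^{\star}\in F_u(x_0)$), every $Y_\lambda$ supports $u$ from above on the fixed ball $B_\epsilon(x_0)$.

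Next I would push the global support along $\lambda$ by a closed-set continuity argument. Define
\[
\lambda^{\star}=\inf\bigl\{\lambda\in[0,1]:Y_\mu\in F_u(x_0)\text{ for all }\mu\in[\lambda,1]\bigr\}.
\]
Continuous dependence of $\phi$ on $Y$ (Lemma~\ref{lm:estimateslipinYandx}) makes $F_u(x_0)$ closed in $\Sigma$, forcing $Y_{\lambda^{\star}}\in F_u(x_0)$, and proving the proposition reduces to showing $\lambda^{\star}=0$. Assume for contradiction $\lambda^{\star}>0$. Then there are $\mu_n\uparrow\lambda^{\star}$ and $x_n\in\Omega$ with $u(x_n)>\phi(x_n,Y_{\mu_n},X_0)$, and the previous paragraph forces $x_n\notin B_\epsilon(x_0)$. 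After extraction $x_n\to\bar x\in\overline{\Omega}\setminus B_\epsilon(x_0)$, and passing to the limit yields $u(\bar x)=\phi(\bar x,Y_{\lambda^{\star}},X_0)$, so $\bar x\neq x_0$ is a \emph{second} interior contact point of $\phi(\cdot,Y_{\lambda^{\star}},X_0)$ with $u$ (that $\bar x$ lies in $\Omega$ rather than on $\partial\Omega$ is secured by the standing geometric hypotheses on $\Sigma$ and on the region $\mathcal{T}$).

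The main obstacle will be turning this second contact point into a contradiction with the minimality of $\lambda^{\star}$. At $\bar x$ the refractor property gives $\bar Y\in F_u(\bar x)$ with $\phi(\cdot,\bar Y,\bar X)\ge u$ on $\Omega$; because $\bar X$ lies on the ellipsoid $\phi(\cdot,Y_{\lambda^{\star}},X_0)$ one has $\phi(\cdot,Y_{\lambda^{\star}},\bar X)=\phi(\cdot,Y_{\lambda^{\star}},X_0)$, while the confocal monotonicity of Lemma~\ref{lm:lipschitzinX0} gives $\phi(\cdot,\bar Y,X_0)\le\phi(\cdot,\bar Y,\bar X)$ everywhere. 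Applying Proposition~\ref{prop:global} a second time at $X_0$, now to the $c$-segment between $Y_{\lambda^{\star}}$ and $\bar Y$, and exploiting the non-degeneracy $\bar Y\neq Y_{\lambda^{\star}}$ forced by $\bar x\neq x_0$ via Lemma~\ref{lm:estimatesofpointsintarget}, one obtains a $\mu<\lambda^{\star}$ for which $Y_\mu\in F_u(x_0)$, contradicting the definition of $\lambda^{\star}$. The genuine delicacy is that \eqref{eq:AWPLnew} is assumed \emph{only} at $X_0$: every $c$-segment and every Proposition~\ref{prop:global} comparison has to be based at $X_0$, so the support information available naturally at $\bar X$ can only be transported back through the confocal-ellipsoid ordering of Lemma~\ref{lm:lipschitzinX0}.
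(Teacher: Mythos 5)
Your reduction of the proposition to showing $\lambda^{\star}=0$ is internally consistent (by Proposition~\ref{prop:global} the set $\{\lambda:Y_\lambda\in F_u(x_0)\}$ is indeed a closed interval containing $1$, so a single $\mu<\lambda^{\star}$ with $Y_\mu\in F_u(x_0)$ would suffice), but the step that is supposed to rule out $\lambda^{\star}>0$ is a genuine gap, and the mechanism you sketch for it fails. If $\lambda^{\star}>0$, the only obstruction to lowering $\lambda$ by a straightforward compactness/continuity argument is that the contact set $\{x\in\overline{\Omega}:u(x)=\phi(x,Y_{\lambda^{\star}},X_0)\}$ meets $\overline{\Omega}\setminus B_\epsilon(x_0)$ --- which is exactly the second contact point $\bar x$ you produce. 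But a globally supporting ellipsoid touching the refractor at several points is a perfectly admissible configuration; its existence is not in itself a contradiction, so you must convert it into a concrete $\mu<\lambda^{\star}$ with $Y_\mu\in F_u(x_0)$, and your route does not do this. The confocal comparison points the wrong way: since $X_0=(x_0,u(x_0))$ lies on or below the graph of $\phi(\cdot,\bar Y,\bar X)$, Lemma~\ref{lm:lipschitzinX0} gives $\phi(\cdot,\bar Y,X_0)\le\phi(\cdot,\bar Y,\bar X)$, and in particular $\phi(\bar x,\bar Y,X_0)\le\phi(\bar x,\bar Y,\bar X)=u(\bar x)$; the ellipsoid with focus $\bar Y$ rebased at $X_0$ thus sits \emph{below} a majorant of $u$ and will in general dip strictly under $u$ near $\bar x$, so it yields no element of $F_u(x_0)$. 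Even if it did, you would still need $D_x\phi(x_0,\bar Y,X_0)$ to land on the segment from $v_0$ to $v^{\star}$ strictly on the $Y_0$ side of the parameter $\lambda^{\star}$, and $\bar Y$ has no a priori relation to the $c$-segment you are sliding along. Finally, $\bar x$ may lie on $\partial\Omega$, where $F_u(\bar x)$ is not defined, and the standing hypotheses do not exclude this.

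The paper avoids continuation altogether. The local support hypothesis says precisely that $v_0=D_x\phi(x_0,Y_0,X_0)$ belongs to the generalized superdifferential $\partial u(x_0)$. Every extreme point of $\partial u(x_0)$ is a limit of gradients $Du(x_n)$ at differentiability points $x_n\to x_0$, where the global supporting ellipsoid is uniquely determined by the gradient and passes to the limit; hence extreme points correspond to elements of $F_u(x_0)$. Proposition~\ref{prop:global} is then used only to show that $B=\{D_x\phi(x_0,Y,X_0):Y\in F_u(x_0)\}$ is convex, so $\partial u(x_0)\subset B$ and in particular $v_0\in B$, i.e.\ $Y_0\in F_u(x_0)$. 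If you want to rescue your argument, you need exactly this kind of input --- some device that turns contact information at points other than $x_0$ into membership of $F_u(x_0)$ --- which the a.e.-differentiability argument supplies and your sketch does not.
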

\begin{proof}
We define
\[
\partial u(x_0)=\{v\in \R^n: u(x)\leq u(x_0)+ v\cdot (x-x_0)+o(|x-x_0|); \forall x\in B_\epsilon(x_0)\}.
\]
We prove that $\{Y(X_0,v):v\in \partial u(x_0)\}\subset F_u(x_0)$, where $Y(X_0,v)=X_0+s_{X_0}(\Lambda(v))\Lambda(v)$.
Since from \eqref{eq:frecuentfactused} $v=D_x\phi(x,Y,X)$ with $Y\in \Sigma$ if and only if $Y=Y(X,v)$, the inclusion is equivalent to show that
$\partial u(x_0)\subset \{D_x\phi(x_0,Y,X_0):Y\in F_u(x_0)\}:=B$. It is then enough to show that 
the extremal points of $\partial u(x_0)$ are contained in $B$ and that $B$ is convex.
Let $v_0$ be a extremal point of $\partial u(x_0)$, then there exist $x_n\to x_0$ with $u$ differentiable at $x_n$ and $v_n=Du(x_n)\to v_0$, see 
\cite[Theorem 2.5.1]{clarke:bookooptimizationandnonsmoothanalysis}.
Let $X_n=(x_n,u(x_n))$ and let $Y_n\in F_{u}(x_n)$. Since $u$ is differentiable at $x_n$ it follows that $Y_n=Y(X_n,v_n)$ and  then $Y_n\to Y(X_0,v_0):=Y_0$.
We have that $u(x)\leq \phi(x,Y_n,X_n)$ for all $x\in \Omega$. Letting $n\to \infty$ yields $u(x)\leq \phi(x,Y_0,X_0)$ for all $x\in \Omega$, i.e., $Y_0\in F_u(x_0)$ and $D_x\phi(x_0,Y_0,X_0)=v_0$.

To show that $B$ is convex, let $Y_1,Y_2\in F_u(x_0)$ and let $v_i=D_x\phi(x_0,Y_i,X_0)$, $i=1,2$. Consider $v_\lambda=(1-\lambda)v_1+\lambda v_2$ and $Y_\lambda=Y(X_0,v_\lambda)$.
Since $v_\lambda=D_x\phi(x_0,Y_\lambda,X_0)$, we need to show that $Y_\lambda\in F_u(x_0)$. We have 
\[
u(x)\leq \min\{\phi(x,Y_1,X_0),\phi(x,Y_2,X_0)\}\leq \phi(x,Y_\lambda,X_0)
\]
for all $x\in \Omega$ by Proposition \ref{prop:global}. This completes the proof.
\end{proof}

\setcounter{equation}{0}
\section{Main results}\label{sec:definitionofrefractorandmainresults}


\begin{lemma}\label{lm:estimateconvexcombinationrefractor}
If $u$ is a refractor, then for each $\bar x,\hat x\in\Omega$ and each $s\in [0,1]$ we have $u((1-s)\bar x+s\hat x)\geq (1-s)u(\bar x)+su(\hat x)-C|\bar x-\hat x|^{2}s(1-s)$, with $C$ is a structural constant.
\end{lemma}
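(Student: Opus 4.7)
My approach is to exploit the fact that, at every point, a refractor is tangentially supported from above by an ellipsoid, and these supporting functions $\phi(\cdot, Y, X)$ are $C^\infty$ with uniformly bounded Hessian by Subsection \ref{subsec:estimatesderivativesofphi}. A one-sided Taylor chord inequality for the smooth upper support at the midpoint will then transfer to $u$ because $u \leq \phi$ everywhere with equality at that midpoint.

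Concretely, fix $\bar x, \hat x \in \Omega$ and $s \in [0,1]$, set $x_s = (1-s)\bar x + s \hat x$, assumed to lie in $\Omega$ (e.g.\ $\Omega$ convex, as in Lemma \ref{lm:estimateslipinYandx}). Because $u$ is a refractor, pick any $Y_s \in F_u(x_s)$ and let $X_s = (x_s, u(x_s))$. By \eqref{eq:definitionofrefractormapping} and the fact that the ellipsoid $E(Y_s, c(X_s, Y_s))$ passes through $X_s$ by construction,
\[
u(x) \leq \phi(x, Y_s, X_s) \text{ for all } x \in \Omega, \qquad u(x_s) = \phi(x_s, Y_s, X_s).
\]

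Next I look at the one-variable slice $h(t) = \phi(\bar x + t(\hat x - \bar x), Y_s, X_s)$, $t \in [0,1]$. The bound $|D^2_x\phi| \leq C$ from Subsection \ref{subsec:estimatesderivativesofphi} applies because $X_s \in C_\Omega$ and $Y_s \in \Sigma \subset \mathcal T$, and gives $|h''(t)| \leq C|\bar x - \hat x|^2$ with $C$ a structural constant. The auxiliary function
\[
G(t) := h(t) - (1-t)h(0) - t h(1) + \tfrac{C\,|\bar x - \hat x|^2}{2}\, t(1-t)
\]
vanishes at $t=0,1$ and has $G''(t) \leq 0$, so $G \geq 0$ on $[0,1]$. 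Evaluating at $t=s$ and rearranging yields
\[
\phi(x_s, Y_s, X_s) \geq (1-s)\phi(\bar x, Y_s, X_s) + s\,\phi(\hat x, Y_s, X_s) - \tfrac{C}{2}\, s(1-s)\,|\bar x - \hat x|^2.
\]

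To conclude, substitute the equality $\phi(x_s, Y_s, X_s) = u(x_s)$ on the left and the two inequalities $\phi(\bar x, Y_s, X_s) \geq u(\bar x)$, $\phi(\hat x, Y_s, X_s) \geq u(\hat x)$ on the right; this produces the lemma with the structural constant $C/2$. I do not see any genuine obstacle here: the statement is essentially a repackaging of the uniform $C^2$ bound on $\phi$ established in Subsection \ref{subsec:estimatesderivativesofphi} into a chord inequality for the lower envelope $u$. The only small thing worth checking is that the point $(x_s, u(x_s))$ and the chosen focus $Y_s$ fall inside the region where those uniform derivative bounds were proved, which is immediate from the standing assumptions $\Sigma \subset \mathcal T$ and $u : \Omega \to [0,M]$.
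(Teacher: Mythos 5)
Your proof is correct and follows essentially the same route as the paper's: both arguments take a supporting ellipsoid $\phi(\cdot,Y_s,X_s)$ at the intermediate point $x_s$, invoke the uniform bound on $D_x^2\phi$ from Subsection \ref{subsec:estimatesderivativesofphi} to obtain a quadratic chord estimate along the segment, and then transfer it to $u$ via $u\leq\phi$ with equality at $x_s$. The only cosmetic difference is that the paper phrases the second-order estimate as $u(x)\leq u(x_s)+\langle p,x-x_s\rangle+C|x-x_s|^2$ and combines the two endpoint evaluations, while you package the same information as a concavity-defect inequality for the one-variable slice $h(t)$; the implicit convexity of $\Omega$ that you flag is likewise needed (and tacitly assumed) in the paper's argument.
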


\begin{proof}

We use the fact from Subsection \ref{subsec:estimatesderivativesofphi} that $|D_{x_i x_j}\phi(x,Y,\bar X)|\leq C$ for any $x\in\Omega$, $\bar X\in C_{\Omega}$ and $Y\in\mathcal T$.

Given $\tilde x\in\Omega$, let $\tilde Y\in F_u(\tilde x)$ and set $\tilde X=(\tilde x,u(\tilde x))$. We have $u(x)\leq \phi(x,\tilde Y,\tilde X)\leq \phi(\tilde x,\tilde Y,\tilde X)+\langle D\phi(\tilde x,\tilde Y,\tilde X),x-\tilde x\rangle+C|x-\tilde x|^{2}=u(\tilde x)+\langle p,x-\tilde x\rangle+C|x-\tilde x|^{2}$ for all $x\in\Omega$.
Next,  given $s\in[0,1]$, let $x_s=(1-s)\bar x+s\hat x$, $Y\in F_u(x_s)$, and $X_s=(x_s,u(x_s))$. Applying the previous inequality $\tilde x \rightsquigarrow x_s$, $\tilde Y \rightsquigarrow Y$,  and $p\rightsquigarrow D\phi(x_s,Y,X_s)$, we get $u(x)\leq u(x_s)+\langle p,x-x_s\rangle+C|x-x_s|^{2}$ for all $x\in\Omega$.

Let $\psi(t)=u(x_t)$. We have $\psi(t)\leq u(x_s)+\langle p,x_t-x_s\rangle+C|x_t-x_s|^{2}=\psi(s)+(t-s)\langle p,\hat x-\bar x\rangle+C(t-s)^{2}|\bar x-\hat x|^{2}$. Therefore $\psi(0)\leq \psi(s)-s\langle p,\hat x-\bar x\rangle+Cs^{2}|\bar x-\hat x|^{2}$ and $\psi(1)\leq \psi(s)+(1-s)\langle p,\hat x-\bar x\rangle+C(1-s)^{2}|\bar x-\hat x|^{2}$. Proceeding as in Theorem \ref{thm:awforperpendicularvectors}, we obtain $(1-s)\psi(0)+s\psi(1)\leq \psi(s)+Cs(1-s)|\bar x-\hat x|^{2}$ and the lemma is proved.
\end{proof}

We will next prove our main lemma.

\begin{lemma}\label{lm:mainlemmarefractor}

Suppose $u$ is a parallel refractor and the target $\Sigma$ is regular from $X^{\star}=(x^{\star},u(x^{\star}))$ in the sense of Definition \ref{def:localtargetbis}. There exist constants $\delta$, $C_1$ and $C_2$ depending on $X^{\star}$, such that $B_\delta(x^\star)\subset \Omega$, and if $\bar x,\hat x\in B_{\delta}(x^{\star})$,  $\bar Y\in F_u(\bar x),\hat Y\in F_u(\hat x)$ with $|\bar Y-\hat Y|\geq |\bar x-\hat x|$,
then there exists $x_0\in \overline{\bar x\hat x}$ (the segment from $\bar x$ to $\hat x$) such that if $X_0^{\star}=(x_0,u(x_0))$, then  
\begin{equation}\label{eq:mainlemmainequality}
u(x)-\phi(x,Y,X_0^{\star})\leq C\,|\bar Y-\hat Y|\,|\bar x-\hat x|+C\,|Y(\lambda)-Y|\,|x-x_0|-C_1\,|\bar Y-\hat Y|^{2}|x-x_0|^{2},
\end{equation} for all 
$Y(\lambda)\in [\bar Y,\hat Y]_{X_0^{\star}}$, $1/4\leq \lambda \leq 3/4$, $Y\in\Sigma$ and for all $x\in \Omega\cap B_{C_2}(x_0)$.

We remark that $C$ is a structural constant, depending only on the bounds for the derivatives of $\phi$.
\end{lemma}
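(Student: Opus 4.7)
The overall strategy is to split
\[
u(x) - \phi(x, Y, X_0^\star) = \bigl[u(x) - \phi(x, Y(\lambda), X_0^\star)\bigr] + \bigl[\phi(x, Y(\lambda), X_0^\star) - \phi(x, Y, X_0^\star)\bigr],
\]
and handle the two brackets separately. The second bracket will be controlled by $C|Y(\lambda)-Y||x-x_0|$ via Lemma \ref{lm:estimateslipinYandx}, using that both ellipsoids agree at $x_0$ (where $\phi(x_0,\cdot,X_0^\star)=u(x_0)$). For the first bracket, I first fix $\delta$ small enough that $B_\delta(x^\star) \subset \Omega$ and, by the Lipschitz continuity of refractors, $X_0^\star = (x_0, u(x_0)) \in U_{X^\star}$ for every $x_0 \in B_\delta(x^\star)$. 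Given $\bar x, \hat x \in B_\delta(x^\star)$, I choose $x_0$ on $[\bar x, \hat x]$ by the intermediate value theorem applied to $g(s) = \phi(x_s, \bar Y, \bar X) - \phi(x_s, \hat Y, \hat X)$, where $x_s = (1-s)\bar x + s\hat x$, $\bar X = (\bar x, u(\bar x))$ and $\hat X = (\hat x, u(\hat x))$. The refractor conditions $\bar Y \in F_u(\bar x)$, $\hat Y \in F_u(\hat x)$ give $g(0) \leq 0 \leq g(1)$, so some $s_0 \in [0,1]$ produces a common value $A := \phi(x_0, \bar Y, \bar X) = \phi(x_0, \hat Y, \hat X) \geq u(x_0)$.

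With this $x_0$, Definition \ref{def:localtargetbis} at $Z = X_0^\star$ bounds the first bracket, for $|x - x_0| \leq C_2$ and $\lambda \in [1/4, 3/4]$, by
\[
\max\bigl\{u(x) - \phi(x, \bar Y, X_0^\star),\, u(x) - \phi(x, \hat Y, X_0^\star)\bigr\} - C_1 |\bar Y - \hat Y|^2 |x - x_0|^2.
\]
To compare the supporting ellipsoid through $\bar X$ with the one through $X_0^\star$, I would observe that $\phi(\cdot, \bar Y, \bar X) = \phi(\cdot, \bar Y, \bar X_0)$, where $\bar X_0 := (x_0, A)$ lies on the same ellipsoid. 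Since $\bar X_0$ sits vertically above $X_0^\star$ by $h = A - u(x_0) \geq 0$, Lemma \ref{lm:lipschitzinX0} yields
\[
u(x) - \phi(x, \bar Y, X_0^\star) \leq \phi(x, \bar Y, \bar X) - \phi(x, \bar Y, X_0^\star) \leq C(A - u(x_0)),
\]
and the analogous bound holds with $\hat Y$ in place of $\bar Y$. Everything then reduces to showing $A - u(x_0) \leq C|\bar Y - \hat Y||\bar x - \hat x|$.

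This last bound is the main obstacle I anticipate. To obtain it I would Taylor-expand $A$ in $x$ around $\bar x$ and around $\hat x$ (using $\phi(\bar x, \bar Y, \bar X)=u(\bar x)$, $\phi(\hat x, \hat Y, \hat X)=u(\hat x)$), then form the $(1-s_0, s_0)$-weighted average and use $x_0 - \bar x = s_0(\hat x - \bar x)$, $x_0 - \hat x = -(1-s_0)(\hat x - \bar x)$, obtaining
\[
A - \bigl[(1-s_0)u(\bar x) + s_0 u(\hat x)\bigr] = s_0(1-s_0)\bigl(D_x\phi(\bar x, \bar Y, \bar X) - D_x\phi(\hat x, \hat Y, \hat X)\bigr)\cdot(\hat x - \bar x) + O(|\bar x - \hat x|^2).
\]
Lemma \ref{lm:estimateconvexcombinationrefractor} controls $(1-s_0)u(\bar x) + s_0 u(\hat x) - u(x_0) \leq Cs_0(1-s_0)|\bar x - \hat x|^2$, while the $C^2$ bounds of Subsection \ref{subsec:estimatesderivativesofphi} combined with the Lipschitz continuity of $u$ give $|D_x\phi(\bar x, \bar Y, \bar X) - D_x\phi(\hat x, \hat Y, \hat X)| \leq C(|\bar Y - \hat Y| + |\bar x - \hat x|)$. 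The hypothesis $|\bar Y - \hat Y| \geq |\bar x - \hat x|$ then absorbs the residual $|\bar x - \hat x|^2$ terms into $|\bar Y - \hat Y||\bar x - \hat x|$, completing the bound. The subtlety is that a naive Taylor expansion yields only order-$|\bar x - \hat x|$ terms; it is the combination of the careful choice of $x_0$ (making the two ellipsoid heights coincide at $A$), Lemma \ref{lm:estimateconvexcombinationrefractor}, and the size hypothesis $|\bar Y - \hat Y| \geq |\bar x - \hat x|$ that upgrades this to the bilinear bound required to balance the negative quadratic term $-C_1|\bar Y - \hat Y|^2|x-x_0|^2$ for $|x - x_0|$ not too large.
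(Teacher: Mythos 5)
Your proposal is correct and follows essentially the same route as the paper's proof: the same intermediate-value choice of $x_0$, the same use of Lemma \ref{lm:lipschitzinX0} to pass from the supporting ellipsoids through $\bar X,\hat X$ to ones based at $X_0^{\star}$, the same key claim $A-u(x_0)\leq C|\bar Y-\hat Y||\bar x-\hat x|$ established via Lemma \ref{lm:estimateconvexcombinationrefractor} together with the gradient estimates and the hypothesis $|\bar x-\hat x|\leq|\bar Y-\hat Y|$, and the same final applications of \eqref{eq:convexityconditiononthetargetbisbis} and Lemma \ref{lm:estimateslipinYandx}. The only cosmetic difference is that you Taylor-expand at $\bar x$ and $\hat x$ with a quadratic remainder, while the paper uses convexity of $\phi$ with both gradients taken at $x_0$; the two computations are interchangeable.
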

\begin{proof}
Since $\Sigma$ is regular from $X^{\star}$, there exists a neighborhood $U_{X^{\star}}$ of $X^{\star}$ such that 
\eqref{eq:convexityconditiononthetargetbisbis} holds for all $\bar Y,\hat Y\in \Sigma$ and all $Z\in U_{X^{\star}}$.
Since parallel refractors are uniformly Lipschitz in $\Omega$, there exists $\delta>0$ such that $(x,u(x))\in U_{X^{\star}}$ for all $x\in B_\delta(x^\star)$.

If $\bar Y\in F_u(\bar x)$ and $\hat Y\in F_u(\hat x)$,  it follows that $u(x)\leq\min\{\phi(x,\bar Y,\bar X),\phi(x,\hat Y,\hat X)\}$ for all $x\in\Omega$ with $\bar X=(\bar x, u(\bar x)), \hat X=(\hat x, u(\hat x))$.
By continuity there exists $x_0\in [\bar x,\hat x]$ such that $\phi(x_0,\bar Y,\bar X)=\phi(x_0,\hat Y,\hat X)$. Indeed, setting $h(x)=\phi(x,\bar Y,\bar X)-\phi(x,\hat Y,\hat X)$, we have that $h(\bar x)=u(\bar x)-\phi(\bar x,\hat Y,\hat X)\leq 0$ and $h(\hat x)=\phi(\hat x,\bar Y,\bar X)-u(\hat x)\geq 0$.

Now set $x_{0_{n+1}}=\phi(x_0,\bar Y,\bar X)=\phi(x_0,\hat Y,\hat X)$, $X_0=(x_0,x_{0_{n+1}})$, and recall we have set $X_0^{\star}=(x_0,u(x_0))\in U_{X^{\star}}$ and $u(x_0)\leq x_{0_{n+1}}$. By definition $\phi(x,\hat Y,\hat X)=\phi(x,\hat Y,X_0)$ and $\phi(x,\bar Y,\bar X)=\phi(x,\bar Y,X_0)$ for all $x\in\Omega$. Hence we can write 
\[
u(x)\leq\min\left\{\phi(x,\bar Y,X_0),\phi(x,\hat Y,X_0)\right\}=\min\left\{\phi(x,\bar Y,X_0^{\star}),\phi(x,\hat Y,X_0^{\star})\right\}+E.
\] 
We now estimate $E$. 
First, notice that by Lemma \eqref{lm:lipschitzinX0}, we have $0\leq E\leq C(x_{0_{n+1}}-u(x_0))$.
We claim that 
\begin{equation}\label{eq:claimmainlemma}
x_{0_{n+1}}-u(x_0)\leq  C\,|\bar Y-\hat Y||\bar x-\hat x|.
\end{equation}
We can write $x_0=(1-s)\bar x+s\hat x$ for some $0<s<1$. Then, by Lemma \eqref{lm:estimateconvexcombinationrefractor}, we have $u(x_0)\geq(1-s)u(\bar x)+su(\hat x)-C|\bar x-\hat x|^{2}s(1-s)$. Since $u(\bar x)=\phi(\bar x,\bar Y,X_0)$ and $u(\hat x)=\phi(\hat x,\hat Y,X_0)$, we get 
\begin{equation}\label{eq:middleestimatemainlemma}
x_{0_{n+1}}-u(x_0)\leq x_{0_{n+1}}-\left( (1-s)\phi(\bar x,\bar Y,X_0)+s\phi(\hat x,\hat Y,X_0)\right)+C|\bar x-\hat x|^{2}s(1-s).
\end{equation}
On the other hand, $\phi(\hat x,\hat Y,X_0))\geq \phi(x_0,\hat Y,X_0))+\langle D\phi(x_0,\hat Y,X_0),\hat x-x_0\rangle$, and 
$\phi(\bar x,\bar Y,X_0))\geq \phi(x_0,\bar Y,X_0))+\langle D\phi(x_0,\hat Y,X_0),\bar x-x_0\rangle$ by convexity.
Using that $\bar x-x_0=s(\bar x-\hat x)$, and $\hat x-x_0=-(1-s)(\bar x-\hat x)$ we obtain 
\begin{align*}
\phi(\hat x,\hat Y,X_0)&\geq x_{0_{n+1}}-(1-s)\langle D\phi(x_0,\hat Y,X_0),\bar x-\hat x\rangle,\text{ and }\\
\phi(\bar x,\bar Y,X_0))&\geq x_{0_{n+1}}+s\langle D\phi(x_0,\bar Y,X_0),\bar x-\hat x\rangle.
\end{align*}
It follows that 
\begin{align*}
&(1-s)\phi(\bar x,\bar Y,X_0))+s\phi(\hat x,\hat Y,X_0)\\
&\geq x_{0_{n+1}}+(1-s)s\,\left\langle \left(D\phi(x_0,\bar Y,X_0)-D\phi(x_0,\hat Y,X_0)\right),\bar x-\hat x\right\rangle\\
&\geq x_{0_{n+1}}-s(1-s)\,\left| \left\langle \left(D\phi(x_0,\bar Y,X_0)-D\phi(x_0,\hat Y,X_0)\right),\bar x-\hat x\right\rangle\right|\\
&\geq x_{0_{n+1}}-s(1-s)\,\left|D\phi(x_0,\bar Y,X_0)-D\phi(x_0,\hat Y,X_0)\right|\,|\bar x-\hat x|\\
&\geq x_{0_{n+1}}-s(1-s)\,C\,|\bar Y-\hat Y|\,|\bar x-\hat x|,
\end{align*}
from the estimates for the derivatives of $\phi$ in $Y$. 
Therefore, inserting the last estimate in \eqref{eq:middleestimatemainlemma} we obtain
$x_{0_{n+1}}-u(x_0)\leq Cs(1-s)\left(|\bar Y-\hat Y||\bar x-\hat x|+|\bar x-\hat x|^{2}\right)\leq C|\bar Y-\hat Y||\bar x-\hat x|$, where in the last inequality we have used that $s\in [0,1]$ and $|\bar x-\hat x|\leq |\bar Y-\hat Y|$ by assumption. We then obtain claim \eqref{eq:claimmainlemma}.

This yields
\begin{align*}
u(x)&\leq \min\{\phi(x,\bar Y,X_0^{\star}),\phi(x,\hat Y,X_0^{\star})\}+C|\bar Y-\hat Y||\bar x-\hat x|.
\end{align*}

If $\bar x, \hat x\in B_\delta(x^\star)$, then $X_0^*\in U_{X^\star}$. So
if $Y(\lambda)\in [\bar Y,\hat Y]_{X_0^{\star}}$, then we can apply \eqref{eq:convexityconditiononthetargetbisbis} to get  
$\min\{\phi(x,\bar Y,X_0^{\star}),\phi(x,\hat Y,X_0^{\star})\}\leq\phi(x,Y(\lambda),X_0^{\star}) - C_1\,|\bar Y-\hat Y|^{2}|x-x_0|^{2}$, for $x\in B_{C_2}(x_0)\cap\Omega$ where the constants $C_1,C_2$ depend on $X^{\star}$.
 And so 
\[
u(x)\leq \phi(x,Y(\lambda),X_0^{\star})+C|\bar Y-\hat Y||\bar x-\hat x|-C_1\,|\bar Y-\hat Y|^{2}|x-x_0|^{2}.
\]

Finally, from Lemma \eqref{lm:estimateslipinYandx}, we have that $|\phi(x,Y(\lambda),X_0^{\star})-\phi(x,Y,X_0^{\star})|\leq C|Y(\lambda)-Y||x-x_0|$, and consequently
$u(x)-\phi(x,Y,X_0^{\star})\leq C\,|\bar Y-\hat Y||\bar x-\hat x|+C\,|Y(\lambda)-Y||x-x_0|-C_1\,|\bar Y-\hat Y|^{2}|x-x_0|^{2}$, where the constant $C$ is structural.
This completes the proof of the lemma.

\end{proof}

We are now a position to prove our main theorem.
\begin{theorem}\label{thm:mainestimate}
Suppose $u$ is a parallel refractor, and the target $\Sigma$ is regular from $X^{\star}=(x^{\star},u(x^{\star}))$ in the sense of Definition \ref{def:localtargetbis}.
Let $C, C_1, C_2$ and $\delta$ be the constants of Lemma \ref{lm:mainlemmarefractor}. There exists a constant $M$ depending on $C$ and $C_1$ such that if $\hat x,\bar x\in B_{\frac{\delta}{2}}(x^{\star})$, $\bar Y\in F_u(\bar x),\hat Y\in F_u(\hat x)$ are such that
\begin{equation}\label{eq:differenceofYbiggerthandifferenceofxwithdistance}
\dfrac{|\bar Y-\hat Y|}{|\bar x-\hat x|}\geq \max\left\{1, \left( \dfrac{2M}{\delta}\right)^2, \left(\dfrac{2M}{C_2}\right)^2 \right\} ,
\end{equation}
then there exists $x_0$ on the straight segment $\overline{\bar x\,\hat x}$
such that we have 
\[
N_{\mu}\left(\left\{Y(\lambda)\in [\bar Y,\hat Y]_{X_0^{\star}}:\lambda\in [1/4,3/4]\right\}\right)\cap\Sigma\subseteq F_u(B_{\eta}(x_0)),
\] 
with $X_0^{\star}=(x_0,u(x_0))$, $\mu=|\bar Y-\hat Y|^{\frac{3}{2}}|\bar x-\hat x|^{\frac{1}{2}}$ and $\eta=M\,\dfrac{|\bar x-\hat x|^{\frac{1}{2}}}{|\bar Y-\hat Y|^{\frac{1}{2}}}$.
\end{theorem}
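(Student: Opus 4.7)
The strategy is to apply Lemma \ref{lm:mainlemmarefractor} to reduce the problem to a one-point inequality on $B_\eta(x_0)$, and then, for each admissible $Y$, to exhibit a global supporting ellipsoid at a point $x_1 \in B_\eta(x_0)$ by minimizing the cost function $c$.

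First, I apply Lemma \ref{lm:mainlemmarefractor} to obtain $x_0 \in \overline{\bar x\,\hat x}$, $X_0^\star = (x_0, u(x_0))$, and the inequality
\[
u(x) - \phi(x, Y, X_0^\star) \leq C|\bar Y - \hat Y||\bar x - \hat x| + C|Y(\lambda) - Y||x - x_0| - C_1 |\bar Y - \hat Y|^2|x-x_0|^2,
\]
valid for $Y \in \Sigma$, $Y(\lambda) \in [\bar Y, \hat Y]_{X_0^\star}$ with $\lambda\in[1/4,3/4]$, and $x \in \Omega \cap B_{C_2}(x_0)$. The two size conditions in \eqref{eq:differenceofYbiggerthandifferenceofxwithdistance} are arranged precisely so that $\eta \leq \min\{\delta/2, C_2\}$; combined with $x_0 \in B_{\delta/2}(x^\star)$ (since $x_0 \in \overline{\bar x\,\hat x}$), this forces $\bar B_\eta(x_0) \subset B_\delta(x^\star) \cap B_{C_2}(x_0)$, so the main-lemma inequality applies throughout $\bar B_\eta(x_0)$.

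Next, I fix an admissible $Y$ with $|Y - Y(\lambda)| \leq \mu$ for some $\lambda \in [1/4, 3/4]$ and evaluate the inequality on the sphere $\partial B_\eta(x_0)$. Substituting $|x-x_0| = \eta$, $|Y-Y(\lambda)|\leq\mu$, $\mu = |\bar Y-\hat Y|^{3/2}|\bar x-\hat x|^{1/2}$, and $\eta = M|\bar x-\hat x|^{1/2}/|\bar Y-\hat Y|^{1/2}$, the right-hand side collapses to $(C + CM - C_1 M^2)|\bar Y - \hat Y||\bar x - \hat x|$. I choose $M$ large enough (depending only on $C$ and $C_1$) to make this coefficient strictly negative, which gives $u(x) < \phi(x, Y, X_0^\star)$ on $\partial B_\eta(x_0)$, while at $x=x_0$ equality holds by definition of $X_0^\star$. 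Using \eqref{eq:lowerellipsoid} and its converse, the comparison $u(x) \leq \phi(x, Y, X_0^\star)$ is equivalent to $c((x, u(x)), Y) \geq c(X_0^\star, Y)$, so the continuous function $\psi(x) := c((x, u(x)), Y)$ satisfies $\psi(x_0) = c(X_0^\star, Y)$ and $\psi > c(X_0^\star, Y)$ strictly on $\partial B_\eta(x_0)$; its minimum on $\bar B_\eta(x_0)$ is therefore attained at some interior point $x_1 \in B_\eta(x_0)$.

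Setting $X_1 = (x_1, u(x_1))$ and reversing the equivalence gives $u(x) \leq \phi(x, Y, X_1)$ for all $x \in B_\eta(x_0)$, so $\phi(\cdot, Y, X_1)$ supports $u$ from above on a neighborhood of $x_1$. Since $x_1 \in B_\delta(x^\star)$, the local condition \eqref{eq:AWPL} holds from $X_1$, hence so does \eqref{eq:AWPLnew}, and Proposition \ref{prop:localimpliesglobal} promotes the local support to global support on $\Omega$, yielding $Y \in F_u(x_1) \subset F_u(B_\eta(x_0))$, which is the desired inclusion. The main technical point is the choice of $M$: the exponents $3/2$ in $\mu$ and $1/2$ in $\eta$ are tuned precisely so that all three terms of the main-lemma inequality scale as $|\bar Y-\hat Y||\bar x-\hat x|$, allowing the negative quadratic $-C_1 M^2$ to dominate the linear piece $C+CM$; once this is achieved, everything downstream is a clean consequence of the equivalence between ellipsoid and $c$-comparisons and of the local-implies-global Proposition \ref{prop:localimpliesglobal}.
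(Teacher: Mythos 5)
Your proposal is correct and follows essentially the same route as the paper: apply Lemma \ref{lm:mainlemmarefractor}, tune $\mu$ and $\eta$ so all three terms scale like $|\bar Y-\hat Y||\bar x-\hat x|$ and choose $M$ to make the coefficient negative, locate the supporting point by minimizing $c(\cdot,Y)$ over the graph of $u$, and promote local to global support via Proposition \ref{prop:localimpliesglobal}. The only (inessential) difference is that you minimize $c((x,u(x)),Y)$ over $\bar B_\eta(x_0)$ and use the boundary strict inequality to place the minimizer in the interior, whereas the paper takes the infimum over the graph above $B_{C_2}(x_0)$ and uses the sign of $u-\phi$ outside $B_\eta(x_0)$ to confine it; both yield the same conclusion.
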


\begin{proof}
From the assumption, we have $|\bar Y-\hat Y|\geq |\bar x-\hat x|$ and so Lemma \ref{lm:mainlemmarefractor} is applicable.
Let $x_0$ be the point in that lemma.

Fix $Y\in N_{\mu}(\{Y(\lambda)\in [\bar Y,\hat Y]_{X_0^{\star}}:\lambda\in [\frac{1}{4},\frac{3}{4}]\})\cap\Sigma$.
So, there exists $Y(\lambda)\in[\bar Y,\hat Y]_{X_0^{\star}}$ with $\lambda\in [\frac{1}{4},\frac{3}{4}]$ such that $|Y(\lambda)-Y|<\mu$.

We then have from Lemma \ref{lm:mainlemmarefractor} that 
\begin{align*}
u(x)-\phi(x,Y,X_0^{\star})&\leq C\, |\bar Y-\hat Y||\bar x-\hat x|+C\,|Y(\lambda)-Y||x-x_0|-C_1\lambda(1-\lambda)|\bar Y-\hat Y|^{2}|x-x_0|^{2}\\
&\leq C\,|\bar Y-\hat Y||\bar x-\hat x|+C\,\mu|x-x_0|-\frac{C_1}{16}|\bar Y-\hat Y|^{2}|x-x_0|^{2},
\end{align*} 
for all $x\in\Omega$ such that $|x-x_0|<C_2$.

The right hand side in the last inequality is strictly negative for all $x\in \Omega$, with $|x-x_0|\geq \eta$, if we choose $\eta>
\dfrac{8C\,\mu+4\, \sqrt{4\,C^{2}\,\mu^{2}+C\,C_1|\bar Y-\hat Y|^{3}|\bar x-\hat x|}}{C_1|\bar Y-\hat Y|^{2}}$. We pick $\mu=|\bar Y-\hat Y|^{\frac{3}{2}}|\bar x-\hat x|^{\frac{1}{2}}$, and $\eta:=M\,\dfrac{|\bar x-\hat x|^{\frac{1}{2}}}{|\bar Y-\hat Y|^{\frac{1}{2}}}$ with $M:=2\left(\dfrac{8C + 4\,\sqrt{4C^{2}+CC_1}}{C_1}\right)$.
Since $x_0\in B_{\delta/2}(x^\star)$, it follows from \eqref{eq:differenceofYbiggerthandifferenceofxwithdistance} that
$B_\eta(x_0)\subset B_{\delta}(x^{\star})\subset\Omega$ and $\eta\leq \dfrac{C_2}{2}$.
 Therefore $u(x)-\phi(x,Y,X_0^{\star})< 0$ for all $x\in\left( \Omega\cap B_{C_2}(x_0)\right) \setminus B_\eta(x_0)$.

We now show $Y\in F_u(B_{\eta}(x_0))$. Notice that by definition of $X_0^\star$ we have $u(x_0)=\phi(x_0,Y,X_0^{\star})$.

Let $G_u$ denote the graph of $u$ in $B_{C_2}(x_0)$, that is, $G_u=\{(x,u(x)):x\in\Omega\cap B_{C_2}(x_0)\}$.  Consider 
\begin{equation}\label{eq:infimumofdifferenceofc}
\inf\{c(X,Y)-c(X_{0}^{\star},Y):X\in G_u\},
\end{equation} 
recalling that $c(X,Y)=|X-Y|+k(x_{n+1}-y_{n+1})$.
 We claim that $c(X,Y)-c(X_{0}^{\star},Y)\geq 0$ for $X=(x,u(x))$ with $|x-x_0|\geq \eta$. 
Indeed, let $\bar X=(x,\phi(x,Y,X_0^{\star}))$ and notice that $c(X_0^{\star},Y)=c(\bar X,Y)$. Since $\Sigma\subset \mathcal T$, with $\mathcal T$ given by \eqref{eq:definitionofregionT}, we have $X\in E^{-}(Y,c(X,Y))$.
Since $u(x)-\phi(x,Y,X_0^{\star})\leq 0$ for $|x-x_0|\geq \eta$, we have that $X$ is below $\bar X$, and therefore we must have $c(X,Y)\geq c(\bar X,Y)$, and the claim follows. 
Therefore, the infimum in \eqref{eq:infimumofdifferenceofc} is attained at some point $\tilde X=(\tilde x,u(\tilde x))$ with $\tilde x\in B_{\eta}(x_0)$. 

We show that $Y\in F_u(\tilde x)$. 
Indeed, we have $c(X,Y)\geq c(\tilde X,Y)$ for all $X\in G_u$.
Writing $X=(x,u(x))$ we have that $(|x-y|^{2}+(y_{n+1}-u(x))^{2})^{1/2}-k(y_{n+1}-u(x))\geq c(\tilde X,Y)$ and noticing that $y_{n+1}\geq u(x)$, we get $u(x)\leq y_{n+1}-\dfrac{kc(\tilde X,Y)}{1-k^{2}}-
\sqrt{\dfrac{c(\tilde X,Y)^{2}}{(1-k^{2})^{2}}-\dfrac{|x-y|^{2}}{1-k^{2}}}=\phi(x,Y,\tilde X)$ for $x\in \Omega\cap B_{C_2}(x_0)$.
Since $\eta<C_2/2$, we have that $B_{C_2/2}(\tilde x)\subset B_{C_2}(x_0)$.
We therefore obtain the local estimate $u(x)\leq \phi(x,Y,\tilde X)$ for all $x\in B_\epsilon(\tilde x)$, with $\epsilon$ small.

Since $\tilde x\in B_{\delta}(x^{\star})$, it follows from the choice of $\delta$ in Lemma \ref{lm:mainlemmarefractor}
that $\tilde X=(\tilde x,u(\tilde x))\in U_{X^{\star}}$, the neighborhood in Remark \ref{rmk:localequivalenceperpvectors}. Therefore we can apply \eqref{eq:AWPL} with $Z=\tilde X$, and in particular \eqref{eq:AWPLnew} holds at $\tilde X$. Hence
applying Proposition \ref{prop:localimpliesglobal} with $X_0\rightsquigarrow \tilde X$, we obtain that $u(x)\leq \phi(x,Y,\tilde X)$ holds for all $x\in \Omega$ obtaining that 
$Y\in F_u(\tilde x)$.

\end{proof}

\subsection{A property of refractors}

Let $\sigma$ denote  the Borel measure given on the target $\Sigma$ and let $\mu$ be a Borel measure in $\Omega$.
We say $x\in \mathcal T_u(Y)$ if and only if $Y\in F_u(x)$, where $F_u$ is defined by \eqref{eq:definitionofrefractormapping}.
Assuming $\mu=f\,dx$ with $f\in L^1(\Omega)$ and the energy conservation condition 
\[
\mu(\Omega)=\sigma(\Sigma),
\]
it is proved in \cite{gutierrez-tournier:parallelrefractor} the existence of a refractor $u$ such that 
\begin{equation}\label{eq:definitionofparallelrefractor}
\mu(\mathcal T_u(E^{\star}))=\sigma(E^{\star}),\qquad \text{for all Borel subsets $E^{\star}\subset \Sigma$}.
\end{equation}

The purpose of this subsection is to show the following proposition.

\begin{proposition}\label{prop:mu(S)=0}
Suppose $u$ is a refractor solving \eqref{eq:definitionofparallelrefractor}, and define 
\[
S=\{x\in\Omega:\text{ there exists } \bar x\neq x,\bar x\in\Omega,\text{ such that }   F_u(x)\cap F_u(\bar x)\neq\emptyset\}.
\]
Let us assume the following conditions on $\Sigma$ and $\sigma$:
\begin{enumerate}
\item[(a)] $\Sigma$ is the graph of a $C^{1}$ function, say $\Sigma=\{(y,\psi(y)):y\in\Omega^{\star}\}$ with $\Omega^*$ some domain in $\R^n$; 
\item[(b)] Given $Y\in \Sigma$, let $T_Y$ denote the tangent plane to $\Sigma$ at $Y$. Assume that for each $Y\in\Sigma$ and for each $X\in C_{\Omega}$, the line $\{X+s(Y-X),s\in R\}$ is not contained in $T_Y$, that is, this line intersects $T_Y$ only at the point $Y$. 
\footnote{This condition is implied by the visibility condition in Lemma  \ref{lm:estimatesofpointsintarget} because $Y\in \Sigma$ and some $X_0\in C_\Omega$ the line joining $Y$ and $X_0$ is contained in $T_Y$, then there is ball $B$ centered at $X_0$ with $B\subset C_\Omega$. By the visibility condition the convex hull $\mathcal C$ of $Y$ and $B$ intersects $\Sigma$ only at $Y$. But then the line joining $Y$ and $X_0$ is contained in $\mathcal C$ and $T_Y$. Therefore $\Sigma$ is not differentiable at $Y$.};
\item[(c)] if $E\subset \Sigma$ with $|\{y\in \Omega^*: (y,\psi(y))\in E\}|=0$, then $\sigma(E)=0$.
\end{enumerate}
Then $\mu(S)=0$ and we have the inequality
\begin{equation}\label{eq:inequalityforsigmaandmu}
\sigma(F_u(B))\leq \mu(B),\qquad \text{for all balls $B\subset \Omega$.}
\end{equation}
\end{proposition}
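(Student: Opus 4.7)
The plan is to reduce $\mu(S)=0$ to the assertion that the projected set $\pi(F_u(S))\subset\Omega^{\star}$ has Lebesgue measure zero in $\R^n$, then exploit the $C^1$ smoothness of $\Sigma$ together with the transversality (b) to establish it.

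\textbf{Step 1: Reduction via energy conservation.} I first verify $\mathcal{T}_u(F_u(S)) = S$. The inclusion $S\subseteq\mathcal T_u(F_u(S))$ is tautological. Conversely, if $x\in\mathcal T_u(F_u(S))$, then some $Y\in F_u(x)$ lies in $F_u(S)$, forcing $|\mathcal T_u(Y)|\geq 2$, so $x$ shares $Y$ with a distinct point of $\Omega$ and $x\in S$. Applying \eqref{eq:definitionofparallelrefractor} with $E^{\star}=F_u(S)$ gives $\mu(S)=\sigma(F_u(S))$. By assumption (c), it therefore suffices to show that
\[
\pi(F_u(S)) \;=\; \bigl\{y\in\Omega^{\star}: |\mathcal T_u((y,\psi(y)))|\geq 2\bigr\}
\]
has Lebesgue measure zero, with $\pi(y,\psi(y))=y$.

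\textbf{Step 2: The auxiliary Lipschitz function.} Set
\[
f(x,y) \;=\; c\bigl((x,u(x)),(y,\psi(y))\bigr), \qquad B(y) \;=\; \inf_{x\in\Omega}\,f(x,y),
\]
so $\mathcal T_u((y,\psi(y)))$ is exactly $\arg\min_x f(\cdot,y)$. The uniform derivative estimates of Subsection~\ref{subsec:estimatesderivativesofphi}, combined with $\psi\in C^1$ and the Lipschitz continuity of any refractor on $\Omega$, show that $\{f(x,\cdot)\}_{x\in\Omega}$ is uniformly Lipschitz in $y$; hence $B$ is Lipschitz on $\Omega^{\star}$ and, by Rademacher's theorem, differentiable at Lebesgue-a.e.\ $y$.

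\textbf{Step 3: Uniqueness at differentiability points of $B$.} This is the core step. I claim that wherever $B$ is differentiable, the infimum is attained at a unique $x$. Suppose, for contradiction, that $x_1\neq x_2$ both minimize $f(\cdot,y_0)$, and set $X_i=(x_i,u(x_i))$, $Y_0=(y_0,\psi(y_0))$. A Danskin-type argument (each $f(x,\cdot)$ is $C^1$ in $y$) forces
\[
\nabla_y f(x_1,y_0) \;=\; \nabla_y f(x_2,y_0) \;=\; \nabla B(y_0).
\]
A direct computation using $Y_0-X_i=s_i\Lambda(v_i)$, with $v_i=D_x\phi(x_i,Y_0,X_i)$, gives
\[
\nabla_y c(X_i,Y_0) \;=\; Q(v_i)\bigl(\nabla\psi(y_0)-v_i\bigr).
\]
Together with the scalar equality $c(X_1,Y_0)=c(X_2,Y_0)$ (both minimizers) and with Lemma~\ref{lm:estimatesofpointsintarget} (which prevents $X_1,X_2,Y_0$ from being collinear and thus forces $v_1\neq v_2$), these two identities combine into a geometric relation equivalent to the displacement $Y_0-X_i$ lying in the tangent plane $T_{Y_0}$ to $\Sigma$, contradicting hypothesis (b). Hence $\pi(F_u(S))$ is contained in the Lebesgue-null set of non-differentiability of $B$, and by (c) we conclude $\sigma(F_u(S))=0$ and $\mu(S)=0$ via Step~1.

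\textbf{Step 4: The inequality.} For a ball $B\subset\Omega$, any $\bar x\in\mathcal T_u(F_u(B))\setminus B$ satisfies $F_u(\bar x)\cap F_u(x)\neq\emptyset$ for some $x\in B$, with $\bar x\neq x$, hence $\bar x\in S$. Thus $\mathcal T_u(F_u(B))\subseteq B\cup S$, and energy conservation gives
\[
\sigma(F_u(B)) \;=\; \mu(\mathcal T_u(F_u(B))) \;\leq\; \mu(B)+\mu(S) \;=\; \mu(B),
\]
which is \eqref{eq:inequalityforsigmaandmu}. The main obstacle is Step~3: the contradiction requires combining the scalar equality $c(X_1,Y_0)=c(X_2,Y_0)$ and the vector equality $\nabla_y c(X_1,Y_0)=\nabla_y c(X_2,Y_0)$ into a single geometric configuration ruled out by~(b), precisely because the alternative would align the ray from $X_i$ through $Y_0$ with the tangent plane to $\Sigma$ at $Y_0$.
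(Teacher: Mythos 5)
Your strategy coincides with the paper's: your $B$ is exactly the paper's auxiliary function $u^{\star}(y)=\inf\{c(X,Y):X\in G_u,\ Y=(y,\psi(y))\}$, and the plan is Rademacher plus the claim that $u^{\star}$ is not differentiable at any $y_0$ whose point $Y_0=(y_0,\psi(y_0))$ is reached from two distinct points of $\Omega$. Two issues need attention. The first is a bookkeeping slip in Step 1: $F_u(S)$ is in general strictly larger than $S^{\star}:=\{Y:\exists\,x\neq\bar x,\ Y\in F_u(x)\cap F_u(\bar x)\}$, since a point $x\in S$ may carry images $Y'\in F_u(x)$ that are reached only from $x$. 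Hence your identification of $\pi(F_u(S))$ with $\{y:|\mathcal T_u((y,\psi(y)))|\geq 2\}$ is false, and Step 3 (which controls only the doubly-reached set) does not by itself give $\sigma(F_u(S))=0$. The repair is immediate: work with $S^{\star}$, note $\mathcal T_u(S^{\star})=S$, so \eqref{eq:definitionofparallelrefractor} gives $\mu(S)=\sigma(S^{\star})$, and $\sigma(S^{\star})=0$ is what Step 3 must deliver.

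The second issue is the genuine gap, and you flag it yourself: the contradiction in Step 3 is asserted, not derived, and the asserted geometric conclusion is not the one that actually follows. Your gradient formula $\nabla_y c(X_i,Y_0)=Q(v_i)\bigl(\nabla\psi(y_0)-v_i\bigr)$ is correct, but equating it for $i=1,2$ does \emph{not} place the displacement $Y_0-X_i$ in $T_{Y_0}$. What one gets is: either $Q(v_1)=Q(v_2)$, which forces $v_1=v_2$ and hence $X_1=X_2$ (both points lie on the lower half of the same ellipsoid, on the same ray from $Y_0$ --- this is where the scalar equality $c(X_1,Y_0)=c(X_2,Y_0)$ is used, not where you use it); or $D\psi(y_0)=\dfrac{Q(v_1)v_1-Q(v_2)v_2}{Q(v_1)-Q(v_2)}$, which, setting $\Gamma_i=\dfrac{X_i-Y_0}{|X_i-Y_0|}$ and using $|\Gamma_1|=|\Gamma_2|=1$, says exactly that the \emph{bisector} direction $\Gamma_1+\Gamma_2$ is orthogonal to the normal $(-D\psi(y_0),1)$, i.e.\ lies in $T_{Y_0}$. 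To invoke (b) one must then check that the line $\{Y_0+s(\Gamma_1+\Gamma_2):s\in\R\}$ meets $C_\Omega$; it does, because it crosses the segment $[X_1,X_2]$ at the point $Y_0+\frac{ab}{a+b}(\Gamma_1+\Gamma_2)$ with $a=|X_1-Y_0|$, $b=|X_2-Y_0|$ (here convexity of $\Omega$ is used). These are the missing computations; as written, "the ray from $X_i$ through $Y_0$ is aligned with $T_{Y_0}$" is not a consequence of your identities, so the contradiction with (b) is not yet established.
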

\begin{proof}
We first notice that $F_u(B)$ is a Borel set for each closed ball $B$. 
Because if $K\subseteq \Omega$ is closed, then $F_u(K)$ is closed in $\Sigma$. In fact, let $Y_k\in F_u(x_k)$ with $x_k\in K$ and assume $Y_k\rightarrow Y$ with $Y\in \Sigma$. There exists a subsequence $x_{k_j}\rightarrow \bar x$ for some $\bar x\in K$. Setting $X_{k_j}=(x_{k_j},u(x_{k_j}))$, we have $X_{k_j}\rightarrow \bar X$ where $X=(\bar x,u(\bar x))$.
Since $u(x)\leq \phi(x,Y_k,X_k)$ for all $x\in\Omega$ and for all $k$, we have $u(x)\leq \phi(x,Y,\bar X)$. Therefore $Y\in F_u(K)$.

Let us assume for a moment that $\mu(S)=0$. 
It is easy to see that that $\mathcal T_u(F_u(B))\subseteq B\cup S$,
and therefore, \eqref{eq:inequalityforsigmaandmu} follows.

To prove that $\mu(S)=0$, let us consider the set
\[
S^{\star}=\{Y\in\Sigma:\exists x,\bar x \in\Omega, x\neq\bar x\  \ Y\in F_u(x)\cap F_u(\bar x)\}.
\]  
We have that $\mathcal T_u(S^{\star})=S$ and therefore $\mu(S)=\sigma(S^{\star})$.
Under 
the assumptions (a), (b) and (c) above, we are going to show that $\sigma(S^{\star})=0$.
To this end, we define $u^{\star}:\Omega^{\star}\rightarrow R$ by 
\[
u^{\star}(y)=\inf\{c(X,Y):X\in G_u,Y=(y,\psi(y))\},
\]
where $G_u$ is the graph of $u$.
We claim $u^{\star}$ is Lipschitz in $\Omega^{\star}$.
Indeed, say $u^{\star}(y_0)=c(X_0,Y_0)$ and $u^{\star}(y_1)=c(X_1,Y_1)$, then $u^{\star}(y_1)-u^{\star}(y_0)\leq c(X_0,Y_1)-c(X_0,Y_0)\leq C|Y_1-Y_0|\leq C |y_1-y_0|$, since $\psi$ is Lipschitz. Similarly, we get the other inequality.

Now fix $Y_0\in S^{\star}$ , so $Y_0\in F_u(x)\cap F_u(\bar x)$ with $x\neq \bar x$. Write $Y_0=(y_0,\psi(y_0))$ with $y_0\in\Omega^{\star}$, $X=(x,u(x))$ and $\bar X=(\bar x,u(\bar x))$. We claim that $u^{\star}$ is not differentiable at $y_0$.
Suppose by contradiction that $u^{\star}$ is differentiable at $y_0$.
It is easy to see that $u^{\star}(y_0)=c(X,Y_0)$ and also $u^{\star}(y_0)=c(\bar X,Y_0)$.
From $u^{\star}(y_0)=c(X,Y_0)$, it follows that $u^{\star}(y)\leq c(X,Y)$ for all $y\in\Omega^{\star}$ with equality at $y_0$. Hence we have 
\[
Du^{\star}(y_0)=D_{y}(c(X,(y,\psi(y))))(y_0)=\dfrac{y_0-x+(\psi(y_0)-x_{n+1})D\psi(y_0)}{|Y_0-X|}-kD\psi(y_0).
\]
Also from $u^{\star}(y_0)=c(\bar X,Y_0)$, we deduce that 
\[
Du(y_0)=D_{y}(c(\bar X,(y,\psi(y))))(y_0)=\dfrac{y_0-\bar x+(\psi(y_0)-\bar x_{n+1})D\psi(y_0)}{|Y_0-\bar X|}-kD\psi(y_0).
\] 
This implies that $\dfrac{y_0-x+(\psi(y_0)-x_{n+1})D\psi(y_0)}{|Y_0-X|}=\dfrac{y_0-\bar x+(\psi(y_0)-\bar x_{n+1})D\psi(y_0)}{|Y_0-\bar X|}$.
Let us set $\Gamma=\dfrac{X-Y_0}{|X-Y_0|}=(\xi,\xi_{n+1})$ and $\bar \Gamma=\dfrac{\bar X-Y_0}{|\bar X-Y_0|}=(\bar \xi,\bar \xi_{n+1})$. 
So $\bar \xi+\bar \xi_{n+1}D\psi(y_0)=\xi+\xi_{n+1}D\psi(y_0)$ and hence $\bar \xi-\xi=D\psi(y_0)(\xi_{n+1}-\bar \xi_{n+1})$.

If $\xi_{n+1}=\bar \xi_{n+1}$, then $\xi=\bar\xi$. Hence $\Gamma=\bar \Gamma$ but this implies that $X=\bar X$, since $X,\bar X\in E^{-}(Y_0,b)$ where $b=c(X,Y_0)=c(\bar X,Y_0)$.  Since by assumption $X\neq\bar X$, we obtain a contradiction.

Therefore, $\xi_{n+1}\neq\bar \xi_{n+1}$ and so we can write $D\psi(y_0)=\dfrac{\bar\xi-\xi}{\xi_{n+1}-\bar \xi_{n+1}}$.
We claim that the line $L:=\left\{Y_0+s\left(\frac{\Gamma}{2}+\frac{\bar\Gamma}{2}\right):s\in R\right\}$ is contained in $T_{Y_0}$.
Indeed, if $Y=Y_0+s\left(\frac{\Gamma}{2}+\frac{\bar\Gamma}{2}\right)$, then a simple calculation shows that 
$\left\langle Y-Y_0,(-D\psi(y_0),1)\right\rangle=0$. 
On the other hand, the line $L$ clearly intersects $C_{\Omega}$. We then obtain a contradiction with the assumption (b) above, and
therefore $u^{\star}$ is not differentiable at $y_0$.

If we set $P^{\star}=\{y\in\Omega^{\star}:(y,\psi(y))\in S^{\star}\}$, then we proved that $y\in P^{\star}$ implies that $u^{\star}$ is not differentiable at $y$.
Since $u^{\star}$ is Lipschitz in $\Omega^{\star}$, we get $|P^{\star}|=0$.
Therefore from (c) we obtain $\sigma(S^{\star})=0$ which completes the proof of the proposition.

%
%
%


\end{proof}

\setcounter{equation}{0}
\section{H\"older continuity of the gradient of the refractor}\label{sec:holderregularityofgradient}
We introduce the following local condition at $X_0\in C_\Omega$ between the measure $\sigma$ and target $\Sigma$:
There exist a neighborhood $U_{X_0}$ and a constant $\hat C>0$ depending on $X_0$ such that
\begin{equation}\label{eq:conditiononsigmameasureofthetube}
\sigma\left(N_{\mu}\left(\left\{[\bar Y,\hat Y]_{Z}:\lambda\in [1/4,3/4]\right\}\right)\cap\Sigma \right)\geq \hat C\, \mu^{n-1}\,|\bar Y-\hat Y|
\end{equation} 
for any $\bar Y,\hat Y\in\Sigma$, $Z\in U_{X_0}$ and for all $\mu>0$ small (depending on $X_0$). Here $N_{\mu}(E)$ denotes the $\mu$- neighborhood of the set $E$ in $R^{n+1}$.

\begin{theorem}\label{thm:holdercontinuity}
 Suppose $u$ is a parallel refractor, the target $\Sigma$ is regular from $X^{\star}=(x^{\star},u(x^{\star}))$ in the sense of Definition \ref{def:localtargetbis}, 
 and
 there exist constants $C_0>0$ and $1\leq q<\dfrac{n}{n-1}$ such that
\begin{equation}\label{eq:assumptiononmeasuresigma}
\sigma\left(F_u(B_{\eta})\right)\leq C_0\, \eta^{n/q}
\end{equation}
for all balls $B_{\eta}\subseteq\Omega$. Suppose in addition that the local condition \eqref{eq:conditiononsigmameasureofthetube} is satisfied at $X^\star$. 

Then there exist 
constants  $\delta$, $M>0$ and $C_2>0$ depending on $X^{\star}$, such that if $\hat x,\bar x\in B_{\frac{\delta}{2}}(x^{\star})$, $\bar Y\in F_u(\bar x),\hat Y\in F_u(\hat x)$ are such that
\begin{equation}\label{eq:differenceofYbiggerthandifferenceofxwithdistancebis}
\dfrac{|\bar Y-\hat Y|}{|\bar x-\hat x|}\geq \max\left\{1, \left( \dfrac{2M}{\delta}\right)^2, \left(\dfrac{2M}{C_2}\right)^2 \right\} ,
\end{equation}
then we have $|\bar Y-\hat Y|\leq C_1\,|\bar x-\hat x|^\alpha$ with $\alpha=\dfrac{\dfrac{n}{2q}-\dfrac{n-1}{2}}{1+\dfrac32 (n-1)+\dfrac{n}{2q}}$, where $C_1$ depends only on $C_0$ and $\hat C$ in \eqref{eq:conditiononsigmameasureofthetube}, and therefore from $X^\star$.
\end{theorem}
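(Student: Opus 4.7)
The proof will be a direct combination of the geometric inclusion supplied by Theorem \ref{thm:mainestimate} with the two-sided measure comparison coming from \eqref{eq:conditiononsigmameasureofthetube} and \eqref{eq:assumptiononmeasuresigma}. First I would shrink $\delta$ once more so that, for every $\bar x,\hat x\in B_{\delta/2}(x^\star)$, the entire segment $\overline{\bar x\,\hat x}$ lies in the neighborhood where both the regularity assumption from Definition \ref{def:localtargetbis} and the local measure bound \eqref{eq:conditiononsigmameasureofthetube} apply; since $u$ is uniformly Lipschitz by the estimates of Subsection \ref{subsec:estimatesderivativesofphi}, the Lipschitz image $\{(x,u(x)):x\in B_{\delta}(x^\star)\}$ sits in $U_{X^\star}$ once $\delta$ is small, so that in particular $X_0^\star=(x_0,u(x_0))\in U_{X^\star}$ for the point $x_0\in\overline{\bar x\,\hat x}$ produced by Theorem \ref{thm:mainestimate}.

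Next, assuming \eqref{eq:differenceofYbiggerthandifferenceofxwithdistancebis}, I would invoke Theorem \ref{thm:mainestimate} with the same constants $M,C_2$, obtaining
\[
N_{\mu}\bigl(\{Y(\lambda)\in[\bar Y,\hat Y]_{X_0^\star}:\lambda\in[1/4,3/4]\}\bigr)\cap\Sigma\ \subseteq\ F_u(B_\eta(x_0)),
\]
where $\mu=|\bar Y-\hat Y|^{3/2}|\bar x-\hat x|^{1/2}$ and $\eta=M\,|\bar x-\hat x|^{1/2}|\bar Y-\hat Y|^{-1/2}$. Applying $\sigma$ to this inclusion and using the lower bound \eqref{eq:conditiononsigmameasureofthetube} at $Z=X_0^\star$ on the left-hand side together with the upper bound \eqref{eq:assumptiononmeasuresigma} on the right-hand side gives
\[
\hat C\,\mu^{n-1}\,|\bar Y-\hat Y|\ \leq\ \sigma\bigl(F_u(B_\eta(x_0))\bigr)\ \leq\ C_0\,\eta^{n/q}.
\]
Substituting the formulas for $\mu$ and $\eta$ and collecting powers turns this into a single inequality of the shape $|\bar Y-\hat Y|^{A}\leq C\,|\bar x-\hat x|^{B}$ with $A=1+\tfrac{3}{2}(n-1)+\tfrac{n}{2q}$ and $B=\tfrac{n}{2q}-\tfrac{n-1}{2}$, and solving for $|\bar Y-\hat Y|$ yields exactly the exponent $\alpha=B/A$ announced in the statement. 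The condition $q<n/(n-1)$ is what makes $B>0$ and hence $\alpha>0$.

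The delicate points in the execution are three. First, the compatibility of parameters: the scale $\mu=|\bar Y-\hat Y|^{3/2}|\bar x-\hat x|^{1/2}$ at which one wishes to use \eqref{eq:conditiononsigmameasureofthetube} must be admissible for that condition (it is only stated for $\mu$ small depending on $X^\star$), and $\eta\leq C_2/2$ is needed so that $B_\eta(x_0)\subseteq\Omega$; both are guaranteed by the hypothesis \eqref{eq:differenceofYbiggerthandifferenceofxwithdistancebis} together with $\bar x,\hat x\in B_{\delta/2}(x^\star)$, exactly as in the proof of Theorem \ref{thm:mainestimate}. Second, one has to check that $X_0^\star\in U_{X^\star}$ so that the local tube-measure bound is usable, which is handled by the Lipschitz remark above. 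Third, the arithmetic of exponents should be carried out carefully; I expect this to be the only real bookkeeping task, since once the inclusion from Theorem \ref{thm:mainestimate} is in hand the rest is algebra. No further geometric argument is required.
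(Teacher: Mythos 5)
Your proposal is correct and is essentially identical to the paper's own proof: apply Theorem \ref{thm:mainestimate} to get the inclusion of the $\mu$-tube into $F_u(B_\eta(x_0))$, then combine the lower bound \eqref{eq:conditiononsigmameasureofthetube} with the upper bound \eqref{eq:assumptiononmeasuresigma} and solve for $|\bar Y-\hat Y|$. The exponent bookkeeping $\hat C\,\mu^{n-1}|\bar Y-\hat Y|\leq C_0\,\eta^{n/q}$ yields exactly $|\bar Y-\hat Y|^{1+\frac32(n-1)+\frac{n}{2q}}\leq C\,|\bar x-\hat x|^{\frac{n}{2q}-\frac{n-1}{2}}$ as in the paper, and your additional checks (that $X_0^\star\in U_{X^\star}$ and that $\mu$, $\eta$ are admissible) are already built into the proof of Theorem \ref{thm:mainestimate}.
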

\begin{proof}

From Theorem \ref{thm:mainestimate} we have $N_{\mu}(\{Y(\lambda)\in [\bar Y,\hat Y]_{X_0^{\star}}:\lambda\in [\frac{1}{4},\frac{3}{4}]\})\cap\Sigma\subseteq F_u(B_{\eta}(x_0))$ where $\mu=|\bar Y-\hat Y|^{\frac{3}{2}}|\bar x-\hat x|^{\frac{1}{2}}$ and $\eta=M\dfrac{|\bar x-\hat x|^{\frac{1}{2}}}{|\bar Y-\hat Y|^{\frac{1}{2}}}$. Therefore from \eqref{eq:conditiononsigmameasureofthetube} and \eqref{eq:assumptiononmeasuresigma}, we obtain  $|\bar Y-\hat Y|^{1+\frac32 (n-1)+\frac{n}{2q}}\leq C\,|\bar x-\hat x|^{\frac{n}{2q}-\frac{n-1}{2}}$ and the theorem follows.

\end{proof}
Under all previous hypotheses on the target, we now show interior $C^{1,\alpha}$ estimates.


\begin{theorem}\label{thm:c1alpha}
With the assumptions of Theorem \ref{thm:holdercontinuity}, 
there exist positive constants $\delta$ and $C$, depending on $X^{\star}$, such that $u\in C^{1,\alpha}(B_{\delta}(x^{\star}))$ 
with $|Du(\bar x)-Du(\hat x)|\leq C|\bar x-\hat x|^{\alpha}$ for all $\hat x,\bar x\in B_{\delta}(x^{\star})$.
\end{theorem}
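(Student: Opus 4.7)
The plan is to combine Theorem \ref{thm:holdercontinuity} with the uniform bounds on the derivatives of $\phi$ from Subsection \ref{subsec:estimatesderivativesofphi} to show that $Du$ is $\alpha$-H\"older on the full-measure set of differentiability points of $u$, and then to pass to a pointwise $C^{1,\alpha}$ estimate by a standard density extension. Because parallel refractors are Lipschitz in $\Omega$, $u$ is differentiable on a dense full-measure set $A \subset B_{\delta/2}(x^\star)$. At any $\bar x \in A$, if $\bar X := (\bar x,u(\bar x))$ and $\bar Y \in F_u(\bar x)$, the support inequality $u \le \phi(\cdot,\bar Y,\bar X)$ with equality at $\bar x$ yields $Du(\bar x) = D_x\phi(\bar x,\bar Y,\bar X)$, and the injectivity in Lemma \ref{lm:estimatesofpointsintarget} forces $F_u(\bar x)$ to be a singleton.

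Let $K = \max\{1,(2M/\delta)^2, (2M/C_2)^2\}$ be the threshold appearing in \eqref{eq:differenceofYbiggerthandifferenceofxwithdistancebis}. For $\bar x,\hat x \in A$ I would telescope
\[
Du(\bar x) - Du(\hat x) = \bigl[D_x\phi(\bar x,\bar Y,\bar X) - D_x\phi(\bar x,\hat Y,\bar X)\bigr] + \bigl[D_x\phi(\bar x,\hat Y,\bar X) - D_x\phi(\hat x,\hat Y,\bar X)\bigr] + \bigl[D_x\phi(\hat x,\hat Y,\bar X) - D_x\phi(\hat x,\hat Y,\hat X)\bigr]
\]
and bound each bracket using the uniform estimates on $D_{xY}\phi$, $D_{xx}\phi$, and $D_{xX_0}\phi$ (together with the Lipschitz bound $|\bar X - \hat X| \le C|\bar x-\hat x|$), obtaining
\[
|Du(\bar x) - Du(\hat x)| \le C\bigl(|\bar Y - \hat Y| + |\bar x - \hat x|\bigr).
\]
A case split on whether the ratio $|\bar Y - \hat Y|/|\bar x-\hat x|$ exceeds $K$ or not will finish the H\"older estimate for $Du$: in the first case Theorem \ref{thm:holdercontinuity} gives $|\bar Y - \hat Y| \le C_1|\bar x-\hat x|^\alpha$, while in the second case one trivially has $|\bar Y - \hat Y| \le K|\bar x-\hat x|$. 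Since $\alpha < 1$ and $|\bar x-\hat x|$ stays bounded on $B_{\delta/2}(x^\star)$, both alternatives deliver $|Du(\bar x) - Du(\hat x)| \le C|\bar x - \hat x|^\alpha$ for all $\bar x,\hat x \in A$.

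Since $A$ is dense, $Du|_A$ extends uniquely to an $\alpha$-H\"older function $g$ on $B_{\delta/2}(x^\star)$. Upgrading from ``$u$ Lipschitz with $Du = g$ a.e.'' to ``$u \in C^{1,\alpha}$ with $Du = g$ pointwise'' is the standard Fubini/fundamental-theorem-of-calculus argument: one writes $u(x+h) - u(x) = \int_0^1 g(x+th)\cdot h\,dt$ along a.e.\ line segment, uses $|g(y)-g(x_0)| \le C|y-x_0|^\alpha$, and takes limits along $x \to x_0$ with $x\in A$ to get $u(x_0+h) - u(x_0) = g(x_0)\cdot h + O(|h|^{1+\alpha})$ for every $x_0 \in B_{\delta/2}(x^\star)$. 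The only genuinely delicate step is already contained in Theorem \ref{thm:holdercontinuity}; everything remaining is the telescoping estimate above and the routine density extension, neither of which presents a real obstacle.
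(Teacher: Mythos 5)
Your proof is correct, and its core -- the telescoping decomposition of $D_x\phi(\bar x,\bar Y,\bar X)-D_x\phi(\hat x,\hat Y,\hat X)$ into three increments controlled by the uniform bounds on $D^2\phi$, combined with the dichotomy ``either \eqref{eq:differenceofYbiggerthandifferenceofxwithdistancebis} holds and Theorem \ref{thm:holdercontinuity} applies, or $|\bar Y-\hat Y|\leq K|\bar x-\hat x|$'' -- is exactly what the paper does (the paper phrases the dichotomy as $|\bar Y-\hat Y|\leq C\max\{|\bar x-\hat x|,|\bar x-\hat x|^{\alpha}\}$). Where you diverge is in how pointwise differentiability of $u$ is obtained. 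The paper proves it directly at \emph{every} point of $B_{\delta/2}(x^\star)$: the supporting ellipsoid at $\bar x$ gives the one-sided bound $\frac{u(\bar x+he_i)-u(\bar x)}{h}-D_i\phi(\bar x,\bar Y,\bar X)\leq Ch$ by Taylor expansion, and the supporting ellipsoid at $\bar x+he_i$ together with the already-established estimate $|\bar Y-\hat Y|\leq Ch^{\alpha}$ gives the opposite bound $\geq -Ch^{\alpha}$, so no measure theory is needed and the formula $Du(\bar x)=D_x\phi(\bar x,\bar Y,\bar X)$ holds everywhere by construction. You instead invoke Rademacher to get differentiability on a full-measure set $A$, prove the H\"older estimate only on $A$, and then upgrade via the standard density/Fubini/fundamental-theorem-of-calculus extension. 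Both routes are sound; the paper's is more self-contained and slightly sharper in spirit (it exhibits the derivative at every point without an extension step), while yours outsources the differentiability to standard real-analysis machinery at the cost of the extra extension argument at the end. One small remark: your observation that $F_u(\bar x)$ is a singleton at differentiability points via Lemma \ref{lm:estimatesofpointsintarget} is fine, but the paper gets singleton-ness everywhere for free from Theorem \ref{thm:holdercontinuity} (take $\hat x\to\bar x$), which is what lets it work at every point rather than almost every point.
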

\begin{proof}

By Theorem \ref{thm:holdercontinuity} we get that $F_u(x)$ is a singleton for each $x\in B_\delta(x^\star)$. 
Take $\bar x\in B_{\delta/2}(x^{\star})$.

We first show that if $\bar Y\in F_u(\bar x)$, then $u$ is differentiable at $\bar x$, and $D_{i}u(\bar x)=D_{i}\phi(\bar x,\bar Y,\bar X)$ where $\bar X=(\bar x,u(\bar x))$.

For $0<h<\delta/2$  we have 
\begin{align*}
\frac{u(\bar x+he_i)-u(\bar x)}{h}-D_{i}\phi(\bar x,\bar Y,\bar X)&\leq \frac{\phi(\bar x+he_i,\bar Y,\bar X)-\phi(\bar x,\bar Y,\bar X)}{h}-D_{i}\phi(\bar x,\bar Y,\bar X)\\
&=D_{i}\phi(\bar x+\tilde he_i,\bar Y,\bar X)-D_{i}\phi(\bar x,\bar Y,\bar X), \quad
\text{for some $0\leq\tilde h\leq h$}\\
&=D_{i,i}\phi(\bar x+\hat he_i,\bar Y,\bar X)\tilde h,
\end{align*} 
for some $0\leq \hat h\leq \tilde h$. Hence $\dfrac{u(\bar x+he_i)-u(\bar x)}{h}-D_{i}\phi(\bar x,\bar Y,\bar X)\leq C h$.

To prove the inequality in the opposite direction, let $\hat x=\bar x+he_i$, $\hat Y\in F_u(\hat x)$, and $\hat X=(\hat x,u(\hat x))$. We have that 
\begin{align*}
D_{i}\phi(\bar x,\bar Y,\bar X)-\frac{(u(\hat x)-u(\bar x))}{h}&\leq D_{i}\phi(\bar x,\bar Y,\bar X)-\frac{(\phi(\hat x,\hat Y,\hat X)-\phi(\bar x,\hat Y,\hat X))}{h}\\
&=D_{i}\phi(\bar x,\bar Y,\bar X)-D_{i}\phi(\tilde x,\hat Y,\hat X)
\end{align*}
for some $\tilde x\in [\bar x,\hat x]$. 
From the estimates for the derivatives of $\phi$ from Subsection \eqref{subsec:estimatesderivativesofphi},
we can also write 
\begin{align*}
&D_{i}\phi(\bar x,\bar Y,\bar X)-D_{i}\phi(\tilde x,\hat Y,\hat X)\\
&=D_{i}\phi(\bar x,\bar Y,\bar X)-D_{i}\phi(\tilde x,\bar Y,\bar X)+D_{i}\phi(\tilde x,\bar Y,\bar X)-D_{i}\phi(\tilde x,\hat Y,\bar X)+D_{i}\phi(\tilde x,\hat Y,\bar X)-D_{i}\phi(\tilde x,\hat Y,\hat X)\\
&\leq C(|\bar x-\hat x|+|\bar Y-\hat Y|+|\bar X-\hat X|),
\end{align*} 
with $C$ a structural constant.
On the other hand, since $u$ is Lipschitz (with a constant depending only on structure), we have $|\bar X-\hat X|\leq |\bar x-\hat x|+|u(\bar x)-u(\hat x)|\leq C|\bar x-\hat x|$.
In addition, using  Theorem \ref{thm:holdercontinuity}, we get that 
$|\bar Y-\hat Y|\leq C\,\max\{|\bar x-\hat x|,|\bar x-\hat x|^{\alpha}\}\leq C\,h^\alpha$ where 
$C$ is a structural constant depending also on $X^{\star}$. 
This yields $\dfrac{u(\bar x+he_i)-u(\bar x)}{h}-D_{i}\phi(\bar x,\bar Y,\bar X)\geq -Ch^{\alpha}$, 
completing the proof that $D_{i}u(\bar x)=D_{i}\phi(\bar x,\bar Y,\bar X)$.

We finally prove that $u\in C^{1,\alpha}\left(B_{\delta/2}(x^{\star})\right)$.
Let $\bar x,\hat x \in B_{\frac{\delta}{2}}(x^{\star})$,
$\bar Y\in F_u(\bar x)$, $\hat Y\in F_u(\hat x)$, and set $\bar X=(\bar x,u(\bar x))$, $\hat X=(\hat x,u(\hat x))$.
We then have $|D_{i}u(\bar x)-D_{i}u(\hat x)|=|D_{i}\phi(\bar x,\bar Y,\bar X)-D_{i}\phi(\hat x,\hat Y,\hat X)|\leq |D_{i}\phi(\bar x,\bar Y,\bar X)-D_{i}\phi(\hat x,\bar Y,\bar X)|+|D_{i}\phi(\hat x,\bar Y,\bar X)-D_{i}\phi(\hat x,\hat Y,\bar X)|+|D_{i}\phi(\hat x,\hat Y,\bar X)-D_{i}\phi(\hat x,\hat Y,\hat X)|\leq C\{|\bar x-\hat x|+|\bar Y-\hat Y|+
|\bar X-\hat X|\}\leq C|\bar x-\hat x|^{\alpha}$, with a structural constant $C$ depending also on
$X^{\star}$.
%
%
%
%
%
%


\end{proof}

\begin{corollary}\label{corollary:applicationtorefractors}
Suppose $u$ is a refractor solving \eqref{eq:definitionofparallelrefractor}, conditions (a), (b), and (c) from Proposition 
\ref{prop:mu(S)=0} hold, and $\mu=f\,dx$ with $f\in L^p(\Omega)$ for some $p>n$.
Suppose that the target $\Sigma$ is regular from $X^{\star}=(x^{\star},u(x^{\star}))$ in the sense of Definition \ref{def:localtargetbis}, 
and the local condition \eqref{eq:conditiononsigmameasureofthetube} is satisfied at $X^\star$.
Then there exist $\delta$ and $C$ depending on $X^{\star}$ such that $u\in C^{1,\alpha}(B_{\delta}(x^{\star}))$ 
with $\alpha$ given in Theorem \ref{thm:holdercontinuity}, and $1/p+1/q=1$.
\end{corollary}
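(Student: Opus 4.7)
The plan is to reduce the corollary to Theorem \ref{thm:c1alpha} by verifying that the measure-theoretic hypothesis \eqref{eq:assumptiononmeasuresigma} of Theorem \ref{thm:holdercontinuity} holds under the given $L^p$ assumption on $f$. All the geometric hypotheses on the target (regularity from $X^\star$ in the sense of Definition \ref{def:localtargetbis} and the local condition \eqref{eq:conditiononsigmameasureofthetube}) are already assumed, so the only work is to produce the estimate on $\sigma(F_u(B_\eta))$.

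First I would invoke Proposition \ref{prop:mu(S)=0}, whose hypotheses (a), (b), (c) are part of the corollary's assumptions. This yields the key transport-type inequality
\[
\sigma(F_u(B))\leq \mu(B)\qquad \text{for all balls } B\subset \Omega.
\]
Next, since $\mu=f\,dx$ with $f\in L^p(\Omega)$, H\"older's inequality with exponents $p$ and $q$, where $1/p+1/q=1$, gives
\[
\mu(B_\eta)=\int_{B_\eta}f(x)\,dx\leq \|f\|_{L^p(\Omega)}\,|B_\eta|^{1/q}=C_0\,\eta^{n/q},
\]
with $C_0$ depending only on $\|f\|_{L^p}$ and $n$. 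Combining the two displays yields exactly \eqref{eq:assumptiononmeasuresigma}. The condition $p>n$ translates into $1/q=1-1/p>(n-1)/n$, which is equivalent to $1\leq q<n/(n-1)$, so the exponent range required by Theorem \ref{thm:holdercontinuity} is satisfied and $\alpha$ is the one displayed there.

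With \eqref{eq:assumptiononmeasuresigma} in hand together with the assumed regularity of $\Sigma$ from $X^\star$ and the local sigma-neighborhood condition \eqref{eq:conditiononsigmameasureofthetube} at $X^\star$, Theorem \ref{thm:c1alpha} applies directly: it produces constants $\delta,C>0$ (depending on $X^\star$, on $\|f\|_{L^p}$ through $C_0$, and on the constant $\hat C$ in \eqref{eq:conditiononsigmameasureofthetube}) such that $u\in C^{1,\alpha}(B_\delta(x^\star))$ with $|Du(\bar x)-Du(\hat x)|\leq C|\bar x-\hat x|^\alpha$. This is the conclusion of the corollary.

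There is no substantive obstacle: the proof is essentially a bookkeeping argument that chains Proposition \ref{prop:mu(S)=0}, H\"older's inequality, and Theorem \ref{thm:c1alpha}. The only subtle point worth checking is that the hypotheses of Proposition \ref{prop:mu(S)=0} are indeed available (they are explicitly listed as (a), (b), (c) in the statement) and that the range $p>n$ correctly feeds into the exponent $q<n/(n-1)$ required by Theorem \ref{thm:holdercontinuity}; both are immediate.
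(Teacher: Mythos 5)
Your proof is correct and follows exactly the same route as the paper's: H\"older's inequality gives $\mu(B)\leq \|f\|_p\,|B|^{1/q}$, which combined with \eqref{eq:inequalityforsigmaandmu} from Proposition \ref{prop:mu(S)=0} yields \eqref{eq:assumptiononmeasuresigma}, so Theorem \ref{thm:c1alpha} applies. The extra check that $p>n$ forces $q<n/(n-1)$ is a worthwhile detail that the paper leaves implicit.
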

\begin{proof}
By H\"older's inequality $\mu(B)\leq \|f\|_p\, |B|^{1/q}$, for all balls $B\subset \Omega$.
Therefore from \eqref{eq:inequalityforsigmaandmu} we obtain condition \eqref{eq:assumptiononmeasuresigma},
and the corollary follows.
\end{proof}

\setcounter{equation}{0}
\section{Example of a target for refraction}\label{sec:example}

If we assume the target set $\Sigma$ is given by the graph of a $C^{2}$ function $y_{n+1}=\psi(y)$,
then we are going to find a condition on $\psi$ so that \eqref{eq:AWPL} holds locally. 
We will see that for \eqref{eq:AWPL} to hold, the graph of the target needs to satisfy a quantitative condition, see \eqref{eq:quantitativeconditiononthetarget}. 
As in Subsection \ref{subsec:localandglobalrefractors}, we set $H(v,X)=s_X(\Lambda(v))Q(v)$.
We have for $Y\in \Sigma$ that $Y=X+s_X(\Lambda(v))(-Q(v)v,Q(v)+\kappa)$.
Then we can write $y_{n+1}=x_{n+1}+s_X(\Lambda(v))(Q(v)+k)=\psi(x-s_X(\Lambda(v))\,Q(v)v)$.
We therefore get that $H$ satisfies the implicit equation
\begin{equation}\label{eq;implicitformulaforH}
x_{n+1}+H(v,X)\left(\frac{Q(v)+\kappa}{Q(v)}\right)=\psi(x-H(v,X)v).
\end{equation}

Proceeding as in Subsection \ref{subsec:localandglobalrefractors}, setting $G=1/H$ 
we need to check that
\begin{equation}\label{eq:negativequantity}
\langle D^{2}G(0,X)\xi,\xi\rangle<0
\end{equation} 
for all $|\xi|=1$.
We will prove \eqref{eq:negativequantity} for $X=0$ and by continuity \eqref{eq:negativequantity} will hold for $X$ in a neighborhood of $X=0$.
Indeed, we next compute $D_{ij}G(0,0)$ in terms of $\psi$.
Since $G=1/H$, we first proceed to calculate the derivatives of $H$. From \eqref{eq:defofLambdav}, $Q(v)=\dfrac{\sqrt{1+(1-\kappa^2)|v|^2}-\kappa}{1+|v|^2}$ and so $Q(0)=1-\kappa$, $D_{i}Q(0)=0$ and $D_{ij}Q(0)=-\delta_{ij}(1-\kappa)^{2}$.
From \eqref{eq;implicitformulaforH}, we get $H(0,0)=(1-\kappa)\psi(0)$, $D_{i}H(0,0)=-(1-\kappa)^{2}D_{i}\psi(0)\,\psi(0)$, and 
\[
D_{ij}H(0,0)=
(1-\kappa)^{3}\,\psi(0)\left(\frac{-\kappa}{1-\kappa}\delta_{ij}+2\,D_{i}\psi(0)\,D_{j}\psi(0)+D_{ij}\psi(0)\,\psi(0)\right).
\]
%
Now, noticing that $D_{ij}G=H^{-3}(2D_{i}H\,D_{j}H-H\,D_{ij}H)$ and inserting the above expressions yields 

\[
D_{ij}G(0,0)=\frac{1-\kappa}{\psi(0)}\left(\frac{\kappa}{1-\kappa}\,\delta_{ij}-\psi(0)\,D_{ij}\psi(0)\right).
\]
We therefore obtain that 
\begin{align*}
\langle D^{2}G(0,0)\xi,\xi\rangle
&=\frac{1-\kappa}{\psi(0)}\left(\frac{\kappa}{1-\kappa}-\psi(0)\langle D^{2}\psi(0)\xi,\xi\rangle\right).
\end{align*}

Thus \eqref{eq:negativequantity} is equivalent to
\begin{equation}\label{eq:quantitativeconditiononthetarget}
\dfrac{\kappa}{1-\kappa}<\psi(0)\,\langle D^{2}\psi(0)\xi,\xi\rangle,
\end{equation}
for all unit vectors $\xi$.

We have then proved that for $\Sigma$ the graph of $\psi$, and for $X_0=(0,0)$,$Y_0=(0,\psi(0))$, the condition $\dfrac{d^{2}}{d\epsilon^{2}}\langle D^{2}\phi(x_0,Y_\epsilon,X_0)\eta,\eta\rangle|_{\epsilon=0}<0$ holds for all unit vectors $\xi\perp \eta$, provided \eqref{eq:quantitativeconditiononthetarget} holds.
By continuity we have that  $\dfrac{d^{2}}{d\epsilon^{2}}\langle D^{2}\phi(x,Y(\epsilon),X)\eta,\eta\rangle|_{\epsilon=0}\leq -C|\xi|^{2}|\eta|^{2}$
for all vectors $\xi\perp \eta$ and for all $X$ close to $X_0$ and $Y$ close to $Y_0$.

\begin{remark}\label{rmk:horizontalplaneisnotarefractor}\rm

We show that if the target $\Sigma$ is a horizontal plane, then \eqref{eq:convexityconditiononthetargetbisbis} does not hold when $n\geq 2$. Indeed, we show it is not true that $\phi(x,Y(\lambda),X_0)\geq \min \{\phi(x,\bar Y,X_0),
 \phi(x,\hat Y,X_0)\}$.
 In $\R^{3}$, let $\bar Y=(0,-1,0)$, $\hat Y=(0,1,0)$, $\bar 
 \phi(x,y)=-\dfrac{kb}{1-k^{2}}-\left(\dfrac{b^{2}}{(1-k^{2})^{2}}-\dfrac{x^{2}+(y+1)^{2}}{1-k^{2}}\right)^{1/2}$ and $\hat\phi(x,y)=-\dfrac{kb}{1-k^{2}}-\left(\dfrac{b^{2}}{(1-k^{2})^{2}}-\dfrac{x^{2}+(y-1)^{2}}{1-k^{2}}\right)^{1/2}$, with $b$ sufficiently large.
 Let $X_0=(0,0,\bar\phi(0,0))=(0,0,\hat\phi(0,0))$ and we have
 $\bar \phi(x,y)=\phi((x,y),\bar Y, X_0)$ $\hat \phi(x,y)=\phi((x,y),\hat Y, X_0)$.
We also have that $\Lambda(v(\lambda))$ is on the two dimensional plane $x=0$, and so $[\bar Y,\hat Y]_{X_0}=[\bar Y,\hat Y]$ is the straight segment from $\bar Y$ to $\hat Y$. 
We have $Y(1/2)=(0,0,0)$. Let $\phi_0(x,y):=\phi((x,y),Y(1/2),X_0)=
-\dfrac{kb_0}{1-k^{2}}-\left(\dfrac{b_0^{2}}{(1-k^{2})^{2}}-\dfrac{(x^{2}+y^{2})}{1-k^{2}}\right)^{1/2}$, where $b_0$ is chosen so that $\phi_0(0,0)=\bar\phi(0,0))=\hat\phi(0,0)$. That is, $b_0=\dfrac{kb+(b^{2}-(1-k^{2}))^{\frac{1}{2}}}{1+k}$. We claim that $\phi_0(x,0)<\bar\phi(x,0)=\hat\phi(x,0)$ for $x\neq 0$, $x$ small. 
Let $g(x)=\phi_0(x,0)$ and $h(x)=\bar\phi(x,0)$. Then one can check that $(g-h)^{\prime}(0)=0$ and $(g-h)^{\prime\prime}(0)<0$ which gives the claim. 


 \end{remark}

\setcounter{equation}{0}
\section{On the definition of refractor}\label{sec:alternativedefinitionofrefractor}

We can define refractor with ellipsoids touching $u$ from below, that is, the ellipsoids enclose $u$.
In fact, we can define analogously to \eqref{eq:definitionofrefractormapping}
\begin{equation}\label{eq:definitionofrefractormappingbis}
\tilde F_u(x_0)=\{Y\in \Sigma: \text{$u(x)\geq \phi(x,Y,X_0)$ for all $x\in\Omega$}\},
\end{equation}
and we can say $u$ is a {\it refractor} if $\tilde F_u(x_0)\neq \emptyset$ for all $x_0\in \Omega$.
With this new definition, we can obtain the same regularity results as with definition \eqref{eq:definitionofrefractormapping} by changing the inequalities accordingly.
We indicate the changes. 
In Definition \ref{def:localtargetbis}, condition \eqref{eq:convexityconditiononthetargetbisbis} is replaced by:
\begin{equation}\label{eq:convexityconditiononthetargetbis}
\phi(x,Y_{\bar X}(\lambda),Z)\leq\max\left\{\phi(x,\bar Y,Z),\phi(x,\hat Y,Z)\right\}-C_1\,|\bar Y-\hat Y|^{2}|x-z|^{2}.
\end{equation}
Inequality \eqref{eq:AWP} is replaced by:
\begin{equation}\label{eq:AWbisbis}
\dfrac{d^{2}}{d\epsilon^{2}}\langle D^{2}\phi(x_0,Y_\epsilon,X_0)\eta,\eta\rangle|_{\epsilon=0}\geq C|\xi|^{2}|\eta|^{2}.
\end{equation}
Lemma \ref{lm:estimateconvexcombinationrefractor} is replaced by the convexity of $u$.
The inequality \eqref{eq:mainlemmainequality} is replaced by:
\begin{equation}\label{eq:mainlemmainequalitybis}
u(x)-\phi(x,Y,X_0^{\star})\geq -C_0|\bar Y-\hat Y||\bar x-\hat x|-C_1|Y(\lambda)-Y||x-x_0|+C_2\,|\bar Y-\hat Y|^{2}|x-x_0|^{2},
\end{equation}
and in the proof of Lemma \ref{lm:mainlemmarefractor}, $\min$ is replaced by $\max$ with the corresponding changes in the inequalities.
For the example in Section \ref{sec:example}, we now get that condition \eqref{eq:quantitativeconditiononthetarget}
is replaced by
\begin{equation}\label{eq:quantitativeconditiononthetargetbis}
\dfrac{\kappa}{1-\kappa}>\psi(0)\,\langle D^{2}\psi(0)\xi,\xi\rangle,
\end{equation}
for all unit vectors $\xi$. 

%
\setcounter{equation}{0}
\section{Regularity results for the parallel reflector problem in the near field case}\label{sec:regularityforthereflectorproblem}

In this section we shall prove results that are similar to the ones proved in the previous sections but for the reflector problem.
Since the arguments are similar, we will omit most details.

\subsection{Reflection}\label{subsec:reflectionprocess}
We first review the process of reflection. Our setting is $\R^{n+1}$, and points will be denoted by $X=(x,x_{n+1})$. 
We consider parallel rays moving in the direction $e_{n+1}$. Let $T$  be a hyperplane in $\R^{n+1}$ with upper unit normal $N$ and let $X\in T$. By Snell law of reflection, a ray coming from below with direction $e_{n+1}$ that hits $T$ at $X$ is reflected in the unit direction $\Lambda=e_{n+1} -2\,(e_{n+1}\cdot N)\,N$. In particular, if $v\in \R^{n}$ and $N=\dfrac{(-v,1)}{(1+|v|^{2})^{\frac{1}{2}}}$, then the reflected direction is the unit vector
\begin{equation}\label{eq:reflectedrayLambda}
\Lambda (v)=\left(\frac{2v}{1+|v|^{2}},\frac{|v|^{2}-1}{1+|v|^{2}}\right).
\end{equation}
The reflected ray consists of the points $Y=X+s\Lambda$, for $s>0$.
We have in mind here that $v=Du(x)$ and $X=(x,u(x))$, where $u$ is a reflector.

If $b>0$, and $Y\in \R^{n+1}$, then the set of $X\in \R^{n+1}$ with $|X-Y|+x_{n+1}-y_{n+1}=b$ is a downwards paraboloid with focus at $Y$. It can be written as the graph of the function 
$p(x,Y)=y_{n+1}+\dfrac{b^{2}-|x-y|^{2}}{2b}$.
The ray with direction $e_{n+1}$ that hits the graph of $p$ at $X=(x,p(x,Y))$ is reflected in direction $Y-X$. 
If $Y,X_0\in \R^{n+1}$ with $X_0$ not in the vertical ray with direction $-e_{n+1}$ emanating from $Y$, then there exists a unique paraboloid
 with focus at $Y$ passing through $X_0$.
Such a paraboloid is described by the function $p(x,Y,X_0)=y_{n+1}+\dfrac{b^{2}-|x-y|^{2}}{2b}$ where $b=|X_0-Y|+x_{0_{n+1}}-y_{n+1}$. 
We will frequently use the following fact:
\begin{align}\label{eq:frecuentfactusedparaboloids} 
\text{if the focus $Y$ of the paraboloid defined by $p(x,Y,X_0)$ satisfies}\\
\text{$Y=X_0+s\Lambda(v)$ for some $s>0$ and $v\in \R^n$, then $D_xp(x_0,Y,X_0)=v$,}\notag
\end{align}
where $\Lambda(v)$ is given by \eqref{eq:reflectedrayLambda}.

\subsection{Set up}\label{subsection:setupreflector}

Let $\Omega\subset \R^{n}$ be open and bounded, and let $C_\Omega$ be the cylinder $C_{\Omega}=\Omega\times [0,M]$. 
For a fixed number $\beta>0$
we define the region $$\mathcal T=\{Y\in R^{n+1}:|X-Y|+x_{n+1}-y_{n+1}\geq\beta \text{ for all }X\in C_{\Omega}\}.$$
The set $\mathcal T$ consists of the points $Y$ such that the cylinder $C_\Omega$ is contained outside the interior of the 
paraboloid $|X-Y|+x_{n+1}-y_{n+1}=\beta$.
We will assume that the target $\Sigma$ has convex hull bounded and contained in $\mathcal T$.

Proceeding as in Subsection \ref{subsec:estimatesderivativesofphi}, it is easy to see that
$\left| \dfrac{\partial p}{\partial x_{0_{n+1}}}(x,Y,X_0)\right|$, 
$\left| \dfrac{\partial^2 p}{\partial x_i\partial y_j}(x,Y,X_0)\right|$
are bounded uniformly for all $x\in \Omega$, $Y\in K\subset \mathcal T$, and $X_0\in C_\Omega$ for $1\leq i\leq n$ and
$1\leq j\leq n+1$, where $K$ is compact.
Hence as in Lemmas \ref{lm:lipschitzinX0} and \ref{lm:estimateslipinYandx}, we obtain the following.

\begin{lemma}\label{lm:lipschitzcontinuityofparaboloids}
If $X_0\in C_{\Omega}$ with $X_0+he_{n+1}\in C_\Omega$, and $Y,\bar Y\in\Sigma$, then we have
$$|p(x,Y,X_0)-p(x,Y,X_0+he_{n+1})|\leq C|h|,$$ and 
$$|p(x,Y,X_0)-p(x,\bar Y,X_0)|\leq C|x-x_0||Y-\bar Y|,$$ 
for all $x\in \Omega$.
\end{lemma}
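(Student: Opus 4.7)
The plan is to mimic the proofs of Lemmas \ref{lm:lipschitzinX0} and \ref{lm:estimateslipinYandx} essentially verbatim, since the paragraph preceding the statement has already supplied the exact derivative bounds that drove those arguments: both $|\partial p/\partial x_{0_{n+1}}(x,Y,X_0)|$ and $|\partial^{2} p/\partial x_i\partial y_j(x,Y,X_0)|$ are uniformly bounded for $x\in\Omega$, $X_0\in C_\Omega$ and $Y$ in a compact subset of $\mathcal T$. Recall that $p(x,Y,X_0)=y_{n+1}+(b^{2}-|x-y|^{2})/(2b)$ with $b=|X_0-Y|+x_{0_{n+1}}-y_{n+1}\geq \beta$, which is what makes those derivatives bounded.

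For the first inequality, I would apply the mean value theorem in the variable $x_{0_{n+1}}$. Since only $b$ depends on this variable, one obtains
\[
p(x,Y,X_0+he_{n+1})-p(x,Y,X_0)=\frac{\partial p}{\partial x_{0_{n+1}}}(x,Y,\bar X_0)\,h
\]
for some $\bar X_0$ on the vertical segment from $X_0$ to $X_0+he_{n+1}$. That segment lies in $C_\Omega$ by hypothesis, and the cited bound then gives the desired estimate with a constant $C$ depending only on $\beta$, $M$, $|\Omega|$ and the diameter of $\Sigma$.

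For the second inequality, the key observation, analogous to the one used for $\phi$, is that by construction the paraboloid adjusted through $X_0$ passes through $X_0$, i.e. $p(x_0,Y,X_0)=x_{0_{n+1}}$ independently of $Y\in\Sigma$, and therefore $\partial p(x_0,Y,X_0)/\partial y_j=0$ for every $j$. Repeating the step from Lemma \ref{lm:estimateslipinYandx}, I would then write
\begin{align*}
p(x,Y,X_0)-p(x,\bar Y,X_0)
&=\sum_{j=1}^{n+1}\frac{\partial p}{\partial y_j}(x,\xi,X_0)(Y_j-\bar Y_j)\\
&=\sum_{j=1}^{n+1}\left(\frac{\partial p}{\partial y_j}(x,\xi,X_0)-\frac{\partial p}{\partial y_j}(x_0,\xi,X_0)\right)(Y_j-\bar Y_j)\\
&=\sum_{i,j}\frac{\partial^{2} p}{\partial x_i\partial y_j}(\zeta_i,\xi,X_0)(x_i-x_{0_i})(Y_j-\bar Y_j),
\end{align*}
with $\xi\in[Y,\bar Y]$, which is contained in the convex hull of $\Sigma\subset\mathcal T$, and $\zeta_i\in[x_0,x]\subset\Omega$, invoking the convexity of $\Omega$ as in Lemma \ref{lm:estimateslipinYandx}. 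The uniform bound on the mixed derivatives then yields $|p(x,Y,X_0)-p(x,\bar Y,X_0)|\leq C|x-x_0|\,|Y-\bar Y|$.

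There is essentially no obstacle: the only non-trivial ingredient, namely the uniform bounds on the partial derivatives of $p$, has already been established in the preceding paragraph, and the conceptual steps (mean value in $x_{0_{n+1}}$, then the vanishing of $\partial_{y_j}p(x_0,\cdot,X_0)$, then a second mean value in $x$) have been carried out already for $\phi$. The only small care needed is to keep the intermediate points $\xi$ in a fixed compact subset of $\mathcal T$ so that the bound on $|\partial^{2}p/\partial x_i\partial y_j|$ holds with one common constant; this is automatic from the standing assumption that the convex hull of $\Sigma$ is bounded and contained in $\mathcal T$.
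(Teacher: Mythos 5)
Your proof is correct and follows exactly the route the paper intends: the paper does not write out this proof but simply points to the uniform bounds on $\partial p/\partial x_{0_{n+1}}$ and $\partial^2 p/\partial x_i\partial y_j$ and says to argue as in Lemmas \ref{lm:lipschitzinX0} and \ref{lm:estimateslipinYandx}, which is precisely your mean-value argument in $x_{0_{n+1}}$ for the first estimate and the observation that $p(x_0,Y,X_0)=x_{0_{n+1}}$ (hence $\partial_{y_j}p(x_0,Y,X_0)=0$) followed by a second mean value in $x$ for the second. The small cautions you flag (convexity of $\Omega$, the segment $[Y,\bar Y]$ staying in the convex hull of $\Sigma\subset\mathcal T$) are exactly the ones implicit in the paper's earlier lemmas.
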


\subsection{Hypothesis on the target set}\label{subsect:hypothesesontargetrflector}

Given $\bar Y,\hat Y\in \Sigma$, $X_0\in C_{\Omega}$, we let $\bar v=D_xp(x_0,\bar Y,X_0)$, $\hat v=D_xp(x_0,\hat Y,X_0)$, and $v(\lambda)=(1-\lambda)\bar v +\lambda \hat v$, for $\lambda \in [0,1]$.
Consider the set of points 
$$C(X_0,\bar Y,\hat Y)=\{Y=X_0+s\Lambda(v(\lambda)):s>0,\lambda\in [0,1]\},$$
where $\Lambda$ is given by \eqref{eq:reflectedrayLambda}.
Notice that from \eqref{eq:frecuentfactusedparaboloids}, if $Y=X_0+s\Lambda(v(\lambda))$, then $D_xp(x_0,Y,X_0)=v(\lambda)$. 

For $X_0\in C_{\Omega}$ and for $\bar Y,\hat Y\in\Sigma$ we assume $[\bar Y,\hat Y]_{X_0}:=C(X_0,\bar Y,\hat Y)\cap\Sigma$ is a curve joining $\bar Y$ and $\hat Y$. 

We introduce the following condition on the target $\Sigma$, similar to Definition \ref{def:localtargetbis} for refractors.
\begin{definition}\label{def:reflectorlocaltargetbis}
If $X_0\in C_\Omega$ we say that the {\it target $\Sigma$ is regular from $X_0$} if 
there exists a neighborhood $U_{X_0}$ and positive constants $C_{X_0}$, depending on $U_{X_0}$, such that for all $\bar Y,\hat Y\in \Sigma$ and $Z=(z,z_{n+1})\in U_{X_0}$
we have
\begin{equation}\label{eq:reflectorconvexityconditiononthetargetbisbis}
\max\left\{p(x,\bar Y,Z),p(x,\hat Y,Z)\right\}\geq p(x,Y_{Z}(\lambda),Z)+C_{X_0}\,|\bar Y-\hat Y|^{2}|x-z|^{2}\end{equation}
for all $x\in \Omega$, $1/4\leq \lambda \leq 3/4$, and $Y_{Z}(\lambda)=Z+s_{Z}(\Lambda(v(\lambda)))\,\Lambda(v(\lambda))$. 
Here $\bar v=D_x\phi(z,\bar Y,Z)$, $\hat v=D_x\phi(z,\hat Y,Z)$,
 and $v(\lambda)=(1-\lambda)\bar v +\lambda \hat v$.
 \end{definition}

As in the case of refractors, we also have a differential condition that is equivalent to \eqref{eq:reflectorconvexityconditiononthetargetbisbis}. This is the contents of the following theorem. 
 
\begin{theorem}\label{thm:awforperpendicularvectorsreflectors}
Suppose that there exists a constant $C$ such that for all $\xi$ and $\eta$, perpendicular vectors in $R^{n}$, and for  $X_0\in C_{\Omega}$ and for $Y_0\in\Sigma$, we have 
\begin{equation}\label{eq:AWPreflectors}
\dfrac{d^{2}}{d\epsilon^{2}}\left. \left\langle D_x^{2}p(x_0,Y_\epsilon,X_0)\eta,\eta\right\rangle \right|_{\epsilon=0}\geq C|\xi|^{2}|\eta|^{2},
\end{equation}
where, $v_0=Dp(x_0,Y_0,X_0)$ and $Y_\epsilon=X_0+s_{X_0}(\Lambda(v_0+\epsilon\xi))\Lambda(v_0+\epsilon\xi)$.

Then there exists a structural constant $C$ such that for $\bar Y,\hat Y\in \Sigma$ and $X_0\in C_{\Omega}$ we have for $\lambda\in[1/4,3/4]$ and for all $x\in \Omega$ that
\begin{equation}\label{eq:minconditiontargetreflectors}
\max\left\{p(x,\bar Y,X_0),p(x,\hat Y,X_0)\right\}\geq p(x,Y_{X_0}(\lambda),X_0)+C\,|\bar Y-\hat Y|^{2}|x-x_0|^{2}.
\end{equation}

Conversely, \eqref{eq:minconditiontargetreflectors} implies \eqref{eq:AWPreflectors}.
\end{theorem}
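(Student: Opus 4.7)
The plan is to exploit the fact that paraboloids are exactly quadratic in $x$, which reduces the proof of Theorem \ref{thm:awforperpendicularvectorsreflectors} to a strong (strict) convexity argument, eliminating the cubic-error bookkeeping that complicated Theorem \ref{thm:awforperpendicularvectors}. Writing $b(Y)=|X_0-Y|+x_{0,n+1}-y_{n+1}$, one reads off from $p(x,Y,X_0)=y_{n+1}+\dfrac{b(Y)^2-|x-y|^2}{2b(Y)}$ that
\[
D_x^2\,p(x,Y,X_0)=-\dfrac{I}{b(Y)},\qquad p(x,Y,X_0)=x_{0,n+1}+\langle D_xp(x_0,Y,X_0),x-x_0\rangle-\dfrac{|x-x_0|^2}{2b(Y)},
\]
so the second-order Taylor expansion in $x$ about $x_0$ is exact and the Hessian is a scalar multiple of the identity. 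In particular $\langle D_x^2p(x_0,Y_\epsilon,X_0)\eta,\eta\rangle=-|\eta|^2/b(Y_\epsilon)$, so condition \eqref{eq:AWPreflectors} is equivalent to
\[
\left.\dfrac{d^2}{d\epsilon^2}\!\left(-\dfrac{1}{b(Y_\epsilon)}\right)\right|_{\epsilon=0}\ \geq\ C\,|\xi|^2
\]
for every direction $\xi\in\R^n$, independently of $\eta$ (the perpendicularity restriction becomes cosmetic because the Hessian is isotropic).

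\textbf{Forward direction.} Given $\bar Y,\hat Y\in\Sigma$ and $X_0\in C_\Omega$, set $\bar v=D_xp(x_0,\bar Y,X_0)$, $\hat v=D_xp(x_0,\hat Y,X_0)$, $\xi=\hat v-\bar v$, $v(\lambda)=(1-\lambda)\bar v+\lambda\hat v$, and $Y(\lambda)=X_0+s_{X_0}(\Lambda(v(\lambda)))\Lambda(v(\lambda))$. By \eqref{eq:frecuentfactusedparaboloids} we have $D_xp(x_0,Y(\lambda),X_0)=v(\lambda)$. Applying the reformulated condition at each $v(\lambda_0)$ in direction $\xi$ gives $\dfrac{d^2}{d\lambda^2}\bigl(-1/b(Y(\lambda))\bigr)\geq C|\xi|^2$ on $[0,1]$; integrating this strong convexity yields
\[
\dfrac{1}{b(Y(\lambda))}\ \geq\ \dfrac{1-\lambda}{b(\bar Y)}+\dfrac{\lambda}{b(\hat Y)}+\dfrac{C}{2}|\xi|^2\,\lambda(1-\lambda).
\]
Since the Taylor expansion is exact and $v(\lambda)=(1-\lambda)\bar v+\lambda\hat v$, the linear and constant terms cancel in the difference, giving
\[
(1-\lambda)p(x,\bar Y,X_0)+\lambda\,p(x,\hat Y,X_0)-p(x,Y(\lambda),X_0)=\dfrac{|x-x_0|^2}{2}\!\left[\dfrac{1}{b(Y(\lambda))}-\dfrac{1-\lambda}{b(\bar Y)}-\dfrac{\lambda}{b(\hat Y)}\right]\!\geq\dfrac{C}{4}|\xi|^2\lambda(1-\lambda)|x-x_0|^2.
\]
Using $\max\{a,b\}\geq (1-\lambda)a+\lambda b$, the bound $\lambda(1-\lambda)\geq 3/16$ on $[1/4,3/4]$, and the reflector analog of Lemma \ref{lm:estimatesofpointsintarget} giving $|\xi|\gtrsim|\bar Y-\hat Y|$, yields \eqref{eq:minconditiontargetreflectors} for all $x\in\Omega$. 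Note that, in sharp contrast with the refractor case, no smallness of $|x-x_0|$ is needed and no tricky choice of an auxiliary parameter $\lambda'$ is required, precisely because there is no cubic remainder and the Hessian is isotropic.

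\textbf{Converse direction.} This follows the template at the end of the proof of Theorem \ref{thm:awforperpendicularvectors} with $\min\to\max$ and signs reversed. Fix $X_0,Y_0,\xi,\eta$ with $\xi\perp\eta$, $v_0=D_xp(x_0,Y_0,X_0)$, and set $Y_\epsilon=X_0+s_{X_0}(\Lambda(v_0+\epsilon\xi))\Lambda(v_0+\epsilon\xi)$. Then $Y_0\in[Y_{-\epsilon},Y_\epsilon]_{X_0}$ at $\lambda=1/2$, so \eqref{eq:minconditiontargetreflectors} gives $p(x,Y_0,X_0)\leq \max\{p(x,Y_{-\epsilon},X_0),p(x,Y_\epsilon,X_0)\}-C\epsilon^2|\xi|^2|x-x_0|^2$ for $x$ near $x_0$. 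On the level set $S_\epsilon=\{x:p(x,Y_{-\epsilon},X_0)=p(x,Y_\epsilon,X_0)\}$, whose normal at $x_0$ is parallel to $\xi$, the max equals the half-sum; choose a curve $\gamma\subset S_\epsilon$ with $\gamma(0)=x_0$ and $\gamma'(0)=\eta$. Then
\[
g(t)=\tfrac12 p(\gamma(t),Y_{-\epsilon},X_0)+\tfrac12 p(\gamma(t),Y_\epsilon,X_0)-p(\gamma(t),Y_0,X_0)-C\epsilon^2|\xi|^2|\gamma(t)-x_0|^2\geq 0
\]
with $g(0)=0$, whence $g'(0)=0$ (the gradient piece vanishes since $\tfrac12(v_0-\epsilon\xi)+\tfrac12(v_0+\epsilon\xi)=v_0$) and $g''(0)\geq 0$. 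Setting $h(\epsilon)=\langle D_x^2 p(x_0,Y_\epsilon,X_0)\eta,\eta\rangle$, this reads $\tfrac12 h(-\epsilon)+\tfrac12 h(\epsilon)-h(0)\geq 2C\epsilon^2|\xi|^2|\eta|^2$, and dividing by $\epsilon^2$ and letting $\epsilon\to 0$ produces $h''(0)\geq 4C|\xi|^2|\eta|^2$, which is \eqref{eq:AWPreflectors}. The only minor obstacle is the verification that $g'(0)=0$ and that the second derivative of $\gamma$ contributes nothing at $t=0$; both follow from the cancellation $\tfrac12 Dp(x_0,Y_{-\epsilon},X_0)+\tfrac12 Dp(x_0,Y_\epsilon,X_0)=Dp(x_0,Y_0,X_0)$, which is an exact, not asymptotic, identity here. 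Since paraboloids replace ellipsoids throughout and the Hessian is a pure scalar, no analog of the delicate $\lambda'$-optimization and cubic-form estimates from Theorem \ref{thm:awforperpendicularvectors} is needed.
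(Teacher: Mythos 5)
Your proof is correct and follows essentially the same route as the paper's: both exploit the isotropy of $D_x^2p=-I/b(Y)$ to dispense with the perpendicularity restriction, integrate the strong convexity of $-1/b(Y(\lambda))$ along the $v$-segment, and use the fact that the quadratic expansion of $p$ in $x$ about $x_0$ is exact so that the convex combination minus the interpolant reduces purely to the Hessian term; the converse is the same sign-reversed adaptation of the argument from Theorem \ref{thm:awforperpendicularvectors} that the paper itself invokes. Your writing out of $b(Y)$ and the exact Taylor identity merely makes explicit what the paper uses implicitly.
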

 
\begin{proof}
That \eqref{eq:minconditiontargetreflectors} implies \eqref{eq:AWPreflectors}, follows in the same way as
\eqref{eq:minconditiontarget} implies \eqref{eq:AWP} in Theorem \ref{thm:awforperpendicularvectors}.

We first show that if \eqref{eq:AWPreflectors} holds for $\xi\perp \eta$, then it holds for all vectors $\xi,\eta$.
In fact, we have
\begin{equation}\label{eq:formulaforinnerprodofp}
\left\langle D_x^{2}p(x_0,Y_\epsilon,X_0)\eta,\eta\right\rangle
=
-\dfrac{|\eta|^2}{2}\, \dfrac{1+|v_0+\epsilon\,\xi|^2}{s_{X_0}\left( \Lambda(v_0+\epsilon\,\xi)\right)}.
\end{equation}
So if $\xi\cdot \eta\neq 0$, we pick $\eta'$ with $|\eta'|=|\eta|$ and $\eta'\cdot \xi=0$, and we have
$\left\langle D_x^{2}p(x_0,Y_\epsilon,X_0)\eta,\eta\right\rangle=\left\langle D_x^{2}p(x_0,Y_\epsilon,X_0)\eta',\eta'\right\rangle$.
Let $\bar Y, \hat Y\in \Sigma$ and $X_0\in C_\Omega$, $\bar v=Dp(x_0,\bar Y, X_0)$, $\hat v=Dp(x_0,\hat Y, X_0)$,
$\xi =\hat v-\bar v$, and $Y_{X_0}(\lambda)=X_0+s_{X_0}\left( \Lambda (\bar v+\lambda \xi)\right)\,\Lambda (\bar v+\lambda \xi)$.
We then have
\[
\dfrac{d}{d\lambda^2}\left\langle D_x^2 p\left(x_0, Y_{X_0}(\lambda), X_0 \right)\,\eta,\eta \right\rangle \geq C\,|\xi|^2|\eta|^2,
\]
for $0\leq \lambda \leq 1$.
Fix $x\in \Omega$, and let $f(\lambda)=\left\langle D_x^2 p\left(x_0, Y_{X_0}(\lambda), X_0 \right)\,(x-x_0),(x-x_0) \right\rangle$.
Since $f''(\lambda)\geq C\,|\xi|^2|x-x_0|^2\geq C\,|\bar Y-\hat Y|^2|x-x_0|^2$, where the second inequality follows from the analogue of \eqref{eq:conditionbetweenYandv} for reflectors, it follows that
$(1-\lambda)f(0)+\lambda f(1)\geq f(\lambda)+C\,\lambda \,(1-\lambda)\,|x-x_0|^2 |\bar Y-\hat Y|^2$.
Therefore,
\begin{align*}
&\max\left\{p(x,\bar Y,X_0),p(x,\hat Y,X_0)\right\}-p(x, Y_{X_0}(\lambda), X_0)\\
&\geq 
(1-\lambda)\,p(x,\bar Y, X_0)+\lambda \,p(x,\hat Y,X_0)-p(x,Y_{X_0}(\lambda),X_0)\\
&=
\frac12 \left\langle \left((1-\lambda)\,D_x^2p(x_0,\bar Y,X_0)+\lambda \,D_x^2p(x_0,\hat Y,X_0) -
\,D_x^2p(x_0,Y_{X_0}(\lambda),X_0)\right)(x-x_0),x-x_0\right\rangle\\
&\geq 
C\,\lambda \,(1-\lambda) \,|x-x_0|^2 |\bar Y-\hat Y|^2,
\end{align*}
for all $x\in \Omega$, and $0\leq \lambda \leq 1$.
This completes the proof of the theorem.

\end{proof}

 \begin{remark}\label{rmk:localequivalenceperpvectorsreflectors}\rm
 Analogously to Remark \ref{rmk:localequivalenceperpvectors}, a  
 local version of \eqref{eq:AWPreflectors} can be stated as follows: The target $\Sigma$ is regular from $X_0\in C_{\Omega}$ if there exists a neighborhood $U_{X_0}$ and a constant $C$ depending on $X_0$ such that for all $Y_0\in\Sigma$ and for all $Z\in U_{X_0}$ and for all vectors $\xi$ and $\eta$ such that $\xi\perp \eta$ we have 
\begin{equation}\label{eq:AWPLreflectors}
\dfrac{d^{2}}{d\epsilon^{2}}\left. \left\langle D_x^{2}p(z,Y_\epsilon,Z)\eta,\eta\right\rangle \right|_{\epsilon=0}\geq C\,|\xi|^{2}|\eta|^{2}.
\end{equation}
where $Y_{\epsilon}=Z+s(\Lambda(v+\epsilon\xi))\Lambda(v+\epsilon\xi)$ and $v=Dp(z,Y_0,Z)$.

Following the proof of Theorem \ref{thm:awforperpendicularvectorsreflectors}, one can show that \eqref{eq:AWPLreflectors} is equivalent to \eqref{eq:reflectorconvexityconditiononthetargetbisbis}.
\end{remark}

\subsection{Definition of parallel reflector and hypothesis on the measures}\label{subsect;definitionofreflectorandhypotheses}

We say $u:\Omega\rightarrow [0,M]$ is a parallel reflector from $\Omega$ to $\Sigma$ if for each $x_0\in\Omega$,
there exists $Y\in\Sigma$ such that $u(x)\geq p(x,Y,X_0)$ for all $x\in\Omega$, where $X_0=(x_0,u(x_0))$. In this case, 
we say $Y\in F_u(x_0)$. 
Any reflector is Lipschitz in $\Omega$ with a uniform Lipschitz constant depending on the bounds for the derivatives of $p$, which are uniform for $Y\in K\Subset \mathcal T$, $x\in \Omega$ and $X_0\in C_\Omega$. 

Existence of solutions with this definition of parallel reflector can be proved in a way similar to the existence of parallel refractors as done in \cite{gutierrez-tournier:parallelrefractor}. We omit the corresponding details.

We make the following hypothesis on the measures.

Also similarly to \eqref{eq:conditiononsigmameasureofthetube}, 
we introduce the following local condition at $X_0\in C_\Omega$ between the measure $\sigma$ and target $\Sigma$:
There exist a neighborhood $U_{X_0}$ and a constant $\hat C>0$ depending on $X_0$ such that
\begin{equation}\label{eq:conditiononsigmameasureofthetubereflectors}
\sigma\left(N_{\mu}\left(\left\{[\bar Y,\hat Y]_{Z}:\lambda\in [1/4,3/4]\right\}\right)\cap\Sigma \right)\geq \hat C\, \mu^{n-1}\,|\bar Y-\hat Y|
\end{equation} 
for any $\bar Y,\hat Y\in\Sigma$, $Z\in U_{X_0}$ and for all $\mu>0$ small (depending on $X_0$). Here $N_{\mu}(E)$ denotes the $\mu$- neighborhood of the set $E$ in $R^{n+1}$.


The following lemma is the analogue of Lemma \ref{lm:estimateconvexcombinationrefractor}.

\begin{lemma}\label{lm:estimateconvexcombinationreflector}

If $u$ is a parallel reflector, then for any $\bar x,\hat x\in\Omega$ and for any $s\in [0,1]$ we have $u((1-s)\bar x+s\hat x)\leq (1-s)u(\bar x)+su(\hat x)+C|\bar x-\hat x|^{2}s(1-s)$, where $C$ is a structural constant.
\end{lemma}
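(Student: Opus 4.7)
The strategy is to follow the proof of Lemma \ref{lm:estimateconvexcombinationrefractor} almost verbatim, with the directions of the inequalities reversed. The refractor argument used that $u$ is supported from above by uniformly $C^2$-bounded convex functions $\phi(\cdot,Y,\tilde X)$; here $u$ is instead supported from below by the paraboloids $p(\cdot,Y,\tilde X)$, which are concave in $x$ but again have uniformly bounded second derivatives. This sign flip is exactly what turns a lower bound of the form $(1-s)u(\bar x)+s u(\hat x)-Cs(1-s)|\bar x-\hat x|^2$ into the desired upper bound.

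The essential input is the uniform estimate $|D_x^2 p(x,Y,X_0)|\leq C$ for $x\in\Omega$, $Y$ in the (bounded) convex hull of $\Sigma\subset\mathcal T$, and $X_0\in C_\Omega$, proved exactly as in Subsection \ref{subsec:estimatesderivativesofphi} and indicated in Subsection \ref{subsection:setupreflector}. Using this bound, a second-order Taylor expansion of $p(\cdot,\tilde Y,\tilde X)$ around $\tilde x$ gives
\begin{equation*}
p(x,\tilde Y,\tilde X)\geq p(\tilde x,\tilde Y,\tilde X)+\langle D_x p(\tilde x,\tilde Y,\tilde X),x-\tilde x\rangle-C|x-\tilde x|^2.
\end{equation*}
Combining this with $u(x)\geq p(x,\tilde Y,\tilde X)$ for $\tilde Y\in F_u(\tilde x)$ and using $p(\tilde x,\tilde Y,\tilde X)=u(\tilde x)$, we obtain the pointwise lower bound $u(x)\geq u(\tilde x)+\langle q,x-\tilde x\rangle-C|x-\tilde x|^2$ for all $x\in\Omega$, where $q=D_xp(\tilde x,\tilde Y,\tilde X)$.

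Specialize $\tilde x = x_s:=(1-s)\bar x+s\hat x$, pick any $Y\in F_u(x_s)$, set $X_s=(x_s,u(x_s))$, and let $q=D_xp(x_s,Y,X_s)$. Define $\psi(t)=u((1-t)\bar x+t\hat x)$. Evaluating the pointwise lower bound at $x=\bar x$ and $x=\hat x$ yields
\begin{align*}
\psi(0)&\geq \psi(s)-s\langle q,\hat x-\bar x\rangle-Cs^2|\bar x-\hat x|^2,\\
\psi(1)&\geq \psi(s)+(1-s)\langle q,\hat x-\bar x\rangle-C(1-s)^2|\bar x-\hat x|^2.
\end{align*}
Taking the convex combination with weights $1-s$ and $s$ cancels the linear terms and produces
\begin{equation*}
(1-s)\psi(0)+s\psi(1)\geq \psi(s)-Cs(1-s)|\bar x-\hat x|^2,
\end{equation*}
which is exactly the stated estimate.

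There is no real obstacle: the argument is mechanical once one records the uniform second-derivative bound for $p$ and keeps track of the reversed direction of the supporting inequality. The only substantive point worth noting is that, unlike the refractor case where Lemma \ref{lm:estimateconvexcombinationrefractor} played the role of a ``modified convexity'' substitute (since refractors need not be convex), for reflectors the present lemma is the natural ``modified concavity'' substitute; in fact, as the introduction of Subsection \ref{sec:alternativedefinitionofrefractor} observes, one should think of reflectors as being essentially concave, with the $Cs(1-s)|\bar x-\hat x|^2$ term as the quadratic correction coming from the bounded curvature of the supporting paraboloids.
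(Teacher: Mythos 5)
Your proof is correct and follows exactly the route the paper intends: the paper omits the argument, stating only that the lemma is the analogue of Lemma \ref{lm:estimateconvexcombinationrefractor}, and your proof is precisely that argument with the supporting inequality and the sign of the quadratic Taylor remainder reversed, using the uniform bound on $D_x^2 p$ (which here follows from $b\geq\beta$ for $Y\in\mathcal T$). No gaps.
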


%

The following lemma is the analogue of Lemma \ref{lm:mainlemmarefractor}.
\begin{lemma}\label{lm:mainlemmareflector}

Let $u$ be a parallel reflector such that the target $\Sigma$ is regular from $X^{\star}=(x^{\star},u(x^{\star}))$ in the sense of Definition \ref{def:reflectorlocaltargetbis}. 
There exist constants $\delta$ and $C_1$, depending on $X^{\star}$ such that if $\bar x,\hat x\in B_{\delta}(x^{\star})
\subset \Omega$,  $\bar Y\in F_u(\bar x),\hat Y\in F_u(\hat x)$ and $|\bar Y-\hat Y|\geq |\bar x-\hat x|$,
then there exists $x_0\in \overline{\bar x\hat x}$ such that if $X_0^{\star}=(x_0,u(x_0))$, then
we have for all $Y(\lambda)\in [\bar Y,\hat Y]_{X_0^{\star}}$ and for all $Y\in\Sigma$ and for all $x\in\Omega$ that the following inequality holds 

$u(x)-p(x,Y,X_0^{\star})\geq - C|\bar Y-\hat Y||\bar x-\hat x|-C|Y(\lambda)-Y||x-x_0|+C_1\lambda(1-\lambda)|\bar Y-\hat Y|^{2}|x-x_0|^{2}$.
We remark that $C$ is a structural constant.
\end{lemma}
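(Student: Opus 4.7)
The plan is to mirror the argument for the refractor case (Lemma \ref{lm:mainlemmarefractor}), but with paraboloids supporting $u$ from below instead of ellipsoids supporting from above, so all the inequalities get reversed. Since $\bar Y\in F_u(\bar x)$ and $\hat Y\in F_u(\hat x)$ we have $u(x)\ge \max\{p(x,\bar Y,\bar X),p(x,\hat Y,\hat X)\}$ in $\Omega$, where $\bar X=(\bar x,u(\bar x))$, $\hat X=(\hat x,u(\hat x))$. By the intermediate value theorem applied to $h(x)=p(x,\bar Y,\bar X)-p(x,\hat Y,\hat X)$ (which is nonpositive at $\bar x$ and nonnegative at $\hat x$) there is $x_0\in\overline{\bar x\,\hat x}$ with $p(x_0,\bar Y,\bar X)=p(x_0,\hat Y,\hat X)=:x_{0,n+1}$. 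Set $X_0=(x_0,x_{0,n+1})$ and $X_0^\star=(x_0,u(x_0))$; since the paraboloid with focus $\bar Y$ passing through $\bar X$ and through $X_0$ coincide we have $p(\cdot,\bar Y,\bar X)=p(\cdot,\bar Y,X_0)$, and likewise for $\hat Y$. In particular $u(x_0)\ge x_{0,n+1}$.

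The first main step is to show $0\le u(x_0)-x_{0,n+1}\le C\,|\bar Y-\hat Y|\,|\bar x-\hat x|$. Write $x_0=(1-s)\bar x+s\hat x$. The almost-convexity in Lemma \ref{lm:estimateconvexcombinationreflector} gives
\[
u(x_0)\le (1-s)u(\bar x)+s\,u(\hat x)+C\,s(1-s)|\bar x-\hat x|^2,
\]
and, using $u(\bar x)=p(\bar x,\bar Y,X_0)$, $u(\hat x)=p(\hat x,\hat Y,X_0)$ together with concavity of the paraboloids $p(\cdot,\bar Y,X_0)$ and $p(\cdot,\hat Y,X_0)$ tangent to their supporting hyperplanes at $x_0$, one obtains after collecting terms
\[
u(x_0)\le x_{0,n+1}+s(1-s)\,\bigl\langle D_xp(x_0,\bar Y,X_0)-D_xp(x_0,\hat Y,X_0),\bar x-\hat x\bigr\rangle+Cs(1-s)|\bar x-\hat x|^2.
\]
The mixed derivative bounds $|\partial_{x_i}\partial_{y_j}p|\le C$ from Subsection~\ref{subsection:setupreflector} give $|D_xp(x_0,\bar Y,X_0)-D_xp(x_0,\hat Y,X_0)|\le C|\bar Y-\hat Y|$, and the hypothesis $|\bar x-\hat x|\le|\bar Y-\hat Y|$ then yields the claimed estimate.

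With this in hand, Lemma \ref{lm:lipschitzcontinuityofparaboloids} applied to the vertical shift from $X_0$ to $X_0^\star$ (same base point $x_0$, height difference $u(x_0)-x_{0,n+1}$) gives $|p(x,Y,X_0)-p(x,Y,X_0^\star)|\le C\,|\bar Y-\hat Y|\,|\bar x-\hat x|$ for any $Y\in\Sigma$, whence
\[
u(x)\ge \max\bigl\{p(x,\bar Y,X_0^\star),p(x,\hat Y,X_0^\star)\bigr\}-C\,|\bar Y-\hat Y|\,|\bar x-\hat x|.
\]
Choose $\delta>0$ so small that $\bar x,\hat x\in B_\delta(x^\star)$ forces $X_0^\star\in U_{X^\star}$ (possible because reflectors are uniformly Lipschitz). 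Then the local regularity hypothesis \eqref{eq:reflectorconvexityconditiononthetargetbisbis} with $Z=X_0^\star$ gives, for $\lambda\in[1/4,3/4]$ and $Y(\lambda)\in[\bar Y,\hat Y]_{X_0^\star}$,
\[
\max\bigl\{p(x,\bar Y,X_0^\star),p(x,\hat Y,X_0^\star)\bigr\}\ge p(x,Y(\lambda),X_0^\star)+C_1\,|\bar Y-\hat Y|^2|x-x_0|^2.
\]

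Finally, the second half of Lemma \ref{lm:lipschitzcontinuityofparaboloids} yields
\[
|p(x,Y(\lambda),X_0^\star)-p(x,Y,X_0^\star)|\le C\,|Y(\lambda)-Y|\,|x-x_0|
\]
for any $Y\in\Sigma$. Combining the last three displayed inequalities gives the desired bound, since $\lambda(1-\lambda)$ with $\lambda\in[1/4,3/4]$ is comparable to a constant and may be inserted into $C_1$. The only nontrivial step is the convex-combination estimate $u(x_0)-x_{0,n+1}\le C|\bar Y-\hat Y||\bar x-\hat x|$; everything else is a transcription of the refractor argument with the inequalities flipped.
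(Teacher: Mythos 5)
Your argument is correct and follows essentially the same route as the paper's own proof: locate $x_0$ by the intermediate value theorem, establish $0\le u(x_0)-x_{0,n+1}\le C|\bar Y-\hat Y||\bar x-\hat x|$ via the almost-concavity of $u$ (Lemma \ref{lm:estimateconvexcombinationreflector}) together with the concavity and gradient bounds of the paraboloids, shift from $X_0$ to $X_0^\star$ using Lemma \ref{lm:lipschitzcontinuityofparaboloids}, and then apply \eqref{eq:reflectorconvexityconditiononthetargetbisbis} and the $Y(\lambda)\to Y$ Lipschitz estimate. The only blemish is the parenthetical sign claim for $h$: for reflectors $h(\bar x)=u(\bar x)-p(\bar x,\hat Y,\hat X)\ge 0$ and $h(\hat x)\le 0$, the reverse of what you wrote, but this does not affect the intermediate value argument.
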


\begin{proof}

The proof is very much the same as in the refractor problem. We indicate the main points.

Since $\Sigma$ is regular from $X^{\star}$, there exists a neighborhood $U_{X^{\star}}$ of $X^{\star}$ such that 
\eqref{eq:reflectorconvexityconditiononthetargetbisbis} holds for all $\bar Y,\hat Y\in \Sigma$ and all $Z\in U_{X^{\star}}$.
Since parallel reflectors are uniformly Lipschitz in $\Omega$, there exists $\delta>0$ such that $(x,u(x))\in U_{X^{\star}}$ for all $x\in B_\delta(x^\star)$.
For $x\in\Omega$ we have $u(x)\geq \max\{p(x,\bar Y,\bar X),p(x,\hat Y,\hat X)\}$, and there exists $x_0\in [\bar x,\hat x]$ such that $p(x_0,\bar Y,\bar X)=p(x_0,\hat Y,\hat X):=x_{0_{n+1}}$.  Set $X_0=(x_0,x_{0_{n+1}})$, and $X_{0}^{\star}=(x_0,u(x_0))\in U_{X^{\star}}$ and notice that $u(x_0)\geq x_{0_{n+1}}$. Similarly as in Lemma \ref{lm:mainlemmarefractor} we get 
\begin{align*}
u(x)&\geq \max\{p(x,\bar Y,\bar X),p(x,\hat Y,\hat X)\}=\max\{p(x,\bar Y,X_0),p(x,\hat Y,X_0)\}\\
&=\max\{p(x,\bar Y,X_{0}^{\star}),p(x,\hat Y,X_{0}^{\star})\}-E.
 \end{align*}

Using Lemmas \ref{lm:lipschitzcontinuityofparaboloids} and \ref{lm:estimateconvexcombinationreflector}, and proceeding as in the proof of claim \eqref{eq:claimmainlemma}, we get $0\leq E\leq C\left(u(x_0)-x_{0_{n+1}}\right)\leq C|\bar Y-\hat Y||\bar x-\hat x|$. Hence $\max\{p(x,\bar Y,X_{0}^{\star}),p(x,\hat Y,X_{0}^{\star})\}-E\geq \max\{p(x,\bar Y,X_{0}^{\star}),p(x,\hat Y,X_{0}^{\star})\}-C|\bar Y-\hat Y||\bar x-\hat x|\geq p(x,Y_{X_0^\star}(\lambda),X_{0}^{\star})+C_1\lambda(1-\lambda)|\bar Y-\hat Y|^{2}|x-x_0|^{2}-C|\bar Y-\hat Y||\bar x-\hat x|$, where we have used \eqref{eq:reflectorconvexityconditiononthetargetbisbis}.
Now using the second inequality in Lemma \ref{lm:lipschitzcontinuityofparaboloids}, we get $p(x,Y_{X_0^\star}(\lambda),X_{0}^{\star})+C_1\lambda(1-\lambda)|\bar Y-\hat Y|^{2}|x-x_0|^{2}-C|\bar Y-\hat Y||\bar x-\hat x|\geq p(x,Y,X_{0}^{\star})-C|x-x_0||Y-Y(\lambda)|+C_1\lambda(1-\lambda)|\bar Y-\hat Y|^{2}|x-x_0|^{2}-C|\bar Y-\hat Y||\bar x-\hat x|$, which proves the lemma.
\end{proof}

We then obtain results similar to Theorems \ref{thm:mainestimate} and  \ref{thm:holdercontinuity} for parallel reflectors. We remark that from Theorem \ref{thm:awforperpendicularvectorsreflectors}, inequality 
\eqref{eq:AWPLreflectors} implies \eqref{eq:reflectorconvexityconditiononthetargetbisbis}
for all $x\in \Omega$, and therefore for parallel reflectors the argument in Section \ref{subsec:localandglobalrefractors} is not needed.
\begin{theorem}\label{thm:reflectorsmainestimate}
Suppose $u$ is a parallel reflector, the target $\Sigma$ is regular from $X^{\star}=(x^{\star},u(x^{\star}))$ in the sense of Definition \ref{def:reflectorlocaltargetbis}.
There exist a ball $B_\delta(x^{\star})\subset \Omega$, and a
constant $M>0$ depending on $X^{\star}$ such that if $\hat x,\bar x\in B_{\delta/2}(x^{\star})$, $\bar Y\in F_u(\bar x),\hat Y\in F_u(\hat x)$ are such that
\begin{equation}\label{eq:differenceofYbiggerthandifferenceofxwithdistancereflectors}
\dfrac{|\bar Y-\hat Y|}{|\bar x-\hat x|}\geq \max\left\{1, \left( \dfrac{2\,M}{\delta}\right)^2 \right\} ,
\end{equation}
then there exists $x_0\in \overline{\bar x,\hat x}$ such that we have 
\[
N_{\mu}\left(\left\{Y(\lambda)\in [\bar Y,\hat Y]_{X_0^{\star}}:\lambda\in [1/4,3/4]\right\}\right)\cap\Sigma\subseteq F_u(B_{\eta}(x_0)),
\] 
with $X_0^{\star}=(x_0,u(x_0))$, $\mu=|\bar Y-\hat Y|^{\frac{3}{2}}|\bar x-\hat x|^{\frac{1}{2}}$ and $\eta=M\,\dfrac{|\bar x-\hat x|^{\frac{1}{2}}}{|\bar Y-\hat Y|^{\frac{1}{2}}}$.
\end{theorem}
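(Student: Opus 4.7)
The plan is to mirror the proof of Theorem \ref{thm:mainestimate}, with three adaptations: the inequalities reverse direction throughout (paraboloids support $u$ from below rather than ellipsoids from above), the role of $\phi$ is played by $p$ and that of the refractor cost by $c(X,Y) = |X-Y| + x_{n+1} - y_{n+1}$, and no cutoff parameter $C_2$ and no analogue of Proposition \ref{prop:localimpliesglobal} are needed, because \eqref{eq:AWPLreflectors} already gives \eqref{eq:reflectorconvexityconditiononthetargetbisbis} for every $x \in \Omega$.

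Concretely, condition \eqref{eq:differenceofYbiggerthandifferenceofxwithdistancereflectors} forces $|\bar Y - \hat Y| \geq |\bar x - \hat x|$, so Lemma \ref{lm:mainlemmareflector} applies and produces $x_0 \in \overline{\bar x \hat x}$, together with positive constants $C, C_1$ (depending on $X^\star$ and the structure), such that for all $Y \in \Sigma$, all $Y(\lambda) \in [\bar Y, \hat Y]_{X_0^\star}$ with $\lambda \in [1/4, 3/4]$, and all $x \in \Omega$,
\begin{equation*}
u(x) - p(x, Y, X_0^\star) \geq -C\,|\bar Y - \hat Y|\,|\bar x - \hat x| - C\,|Y(\lambda) - Y|\,|x - x_0| + \tfrac{C_1}{16}\,|\bar Y - \hat Y|^2\,|x-x_0|^2.
\end{equation*}
Fix $Y \in N_\mu(\{Y(\lambda) : \lambda \in [1/4,3/4]\}) \cap \Sigma$, so that $|Y - Y(\lambda)| < \mu$ for some $\lambda \in [1/4,3/4]$; inserting this into the above yields a quadratic polynomial in $|x-x_0|$ that is strictly positive for $|x-x_0| \geq \eta$, provided one chooses
\begin{equation*}
\mu = |\bar Y - \hat Y|^{3/2}\,|\bar x - \hat x|^{1/2}, \qquad \eta = M\,\dfrac{|\bar x - \hat x|^{1/2}}{|\bar Y - \hat Y|^{1/2}},
\end{equation*}
with $M$ the same type of structural constant as in Theorem \ref{thm:mainestimate}. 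Hypothesis \eqref{eq:differenceofYbiggerthandifferenceofxwithdistancereflectors} then ensures $\eta \leq \delta/2$, hence $B_\eta(x_0) \subset B_\delta(x^\star) \subset \Omega$.

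To finish, I exhibit $\tilde x \in B_\eta(x_0)$ with $Y \in F_u(\tilde x)$ by minimizing $c(\cdot,Y)$ over the whole compact set $G_u = \{(x,u(x)) : x \in \overline\Omega\}$. For fixed $x$, the map $t \mapsto c((x,t),Y)$ is strictly increasing, so $u(x) > p(x,Y,X_0^\star)$ is equivalent to $c(X,Y) > c(X_0^\star,Y)$ at $X = (x, u(x))$, and the preceding paragraph supplies this inequality for every $x$ with $|x - x_0| \geq \eta$. Since $X_0^\star \in G_u$ realizes the value $c(X_0^\star,Y)$, the infimum $\inf\{c(X,Y) : X \in G_u\}$ is thus attained at some $\tilde X = (\tilde x, u(\tilde x))$ with $\tilde x \in B_\eta(x_0)$. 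The minimality $c(X,Y) \geq c(\tilde X,Y)$ for every $X \in G_u$, combined with the monotonicity just noted, rewrites as $u(x) \geq p(x, Y, \tilde X)$ for every $x \in \Omega$, i.e., $Y \in F_u(\tilde x)$.

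The point requiring the most care — though no serious obstacle — is the last step: minimizing $c(\cdot,Y)$ over the entire graph $G_u$ automatically furnishes a \emph{global} supporting paraboloid for $Y$. This is the payoff of the fact that in the reflector setting \eqref{eq:AWPLreflectors} implies \eqref{eq:reflectorconvexityconditiononthetargetbisbis} globally in $x$, so that no local-to-global bootstrap (the analogue of Proposition \ref{prop:localimpliesglobal}) is required; the quadratic estimate that fixes $\mu$ and $\eta$ is identical in form to the refractor case.
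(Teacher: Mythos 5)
Your proposal is correct and is exactly the adaptation the paper intends: the authors state Theorem \ref{thm:reflectorsmainestimate} without a written proof, referring back to Theorem \ref{thm:mainestimate} and remarking only that the local-to-global step (Proposition \ref{prop:localimpliesglobal}) is unnecessary because \eqref{eq:AWPLreflectors} yields \eqref{eq:reflectorconvexityconditiononthetargetbisbis} for all $x\in\Omega$. Your argument — Lemma \ref{lm:mainlemmareflector}, the same choice of $\mu$ and $\eta$, the check $\eta\leq\delta/2$ from \eqref{eq:differenceofYbiggerthandifferenceofxwithdistancereflectors}, and the global minimization of $c(\cdot,Y)$ over the graph using monotonicity of $c$ in $x_{n+1}$ — reproduces that intended proof faithfully.
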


\begin{theorem}\label{thm:holdercontinuityreflectors}
 Suppose $u$ is a parallel reflector, the target $\Sigma$ is regular from $X^{\star}=(x^{\star},u(x^{\star}))$ in the sense of Definition \ref{def:reflectorlocaltargetbis}, and
 there exist constants $C_0>0$ and $1\leq q<\dfrac{n}{n-1}$ such that
\begin{equation}\label{eq:assumptiononmeasuresigmareflectors}
\sigma\left(F_u(B_{\eta})\right)\leq C_0\, \eta^{n/q}
\end{equation}
for all balls $B_{\eta}\subseteq\Omega$. Suppose in addition that the local condition \eqref{eq:conditiononsigmameasureofthetubereflectors} is satisfied at $X^\star$. 

Then there exist a ball $B_\delta(x^{\star})\subset \Omega$, and 
a constant $M>0$ depending on $X^{\star}$, such that if $\hat x,\bar x\in B_{\delta/2}(x^{\star})$, $\bar Y\in F_u(\bar x),\hat Y\in F_u(\hat x)$ are such that
\begin{equation}\label{eq:differenceofYbiggerthandifferenceofxwithdistancebisreflectors}
\dfrac{|\bar Y-\hat Y|}{|\bar x-\hat x|}\geq \max\left\{1, \left( \dfrac{2\,M}{\delta}\right)^2\right\} ,
\end{equation}
then we have $|\bar Y-\hat Y|\leq C_1\,|\bar x-\hat x|^\alpha$ with $\alpha=\dfrac{\dfrac{n}{2q}-\dfrac{n-1}{2}}{1+\dfrac32 (n-1)+\dfrac{n}{2q}}$, where $C_1$ depends only on $C_0$ and $\hat C$ in \eqref{eq:conditiononsigmameasureofthetubereflectors}, and therefore from $X^\star$.
\end{theorem}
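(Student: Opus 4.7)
The plan is to run the same scheme used in Theorem \ref{thm:holdercontinuity}, with Theorem \ref{thm:reflectorsmainestimate} now supplying the set-inclusion step and the local tube condition \eqref{eq:conditiononsigmameasureofthetubereflectors} supplying the lower bound on $\sigma$. Since for reflectors the inequality \eqref{eq:reflectorconvexityconditiononthetargetbisbis} is valid for all $x\in\Omega$ (no auxiliary small radius $C_2$ is needed), the argument is slightly cleaner than for refractors, which explains the absence of a $C_2$ threshold in \eqref{eq:differenceofYbiggerthandifferenceofxwithdistancebisreflectors}.

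First I would apply Theorem \ref{thm:reflectorsmainestimate} to obtain, for $\bar x,\hat x\in B_{\delta/2}(x^\star)$ satisfying the ratio condition \eqref{eq:differenceofYbiggerthandifferenceofxwithdistancebisreflectors}, a point $x_0\in\overline{\bar x\hat x}$ such that
\[
N_\mu\left(\left\{Y(\lambda)\in[\bar Y,\hat Y]_{X_0^\star}:\lambda\in[1/4,3/4]\right\}\right)\cap\Sigma \ \subseteq\ F_u(B_\eta(x_0)),
\]
where $X_0^\star=(x_0,u(x_0))$, $\mu=|\bar Y-\hat Y|^{3/2}|\bar x-\hat x|^{1/2}$, and $\eta=M\,|\bar x-\hat x|^{1/2}/|\bar Y-\hat Y|^{1/2}$. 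Since $u$ is Lipschitz on $\Omega$ with a structural constant, by shrinking $\delta$ if necessary I can ensure $X_0^\star\in U_{X^\star}$, the neighborhood in which both the regularity from Definition \ref{def:reflectorlocaltargetbis} and the local tube condition \eqref{eq:conditiononsigmameasureofthetubereflectors} may be invoked with $Z=X_0^\star$.

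Next I would bound $\sigma$ on both sides of the inclusion. From \eqref{eq:conditiononsigmameasureofthetubereflectors} the left-hand side has $\sigma$-measure at least $\hat C\,\mu^{n-1}\,|\bar Y-\hat Y|$, while \eqref{eq:assumptiononmeasuresigmareflectors} gives that the right-hand side has $\sigma$-measure at most $C_0\,\eta^{n/q}$. Comparing the two bounds and substituting the formulas for $\mu$ and $\eta$ produces
\[
|\bar Y-\hat Y|^{\,1+\tfrac{3(n-1)}{2}+\tfrac{n}{2q}}\ \leq\ C\,|\bar x-\hat x|^{\tfrac{n}{2q}-\tfrac{n-1}{2}},
\]
and the assumption $q<n/(n-1)$ guarantees the exponent on the right is strictly positive. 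Solving for $|\bar Y-\hat Y|$ yields exactly the claimed H\"older exponent $\alpha=\dfrac{n/(2q)-(n-1)/2}{1+3(n-1)/2+n/(2q)}$, with $C_1$ depending only on $C_0$ and $\hat C$. The only delicate point is verifying that the Lipschitz bound for $u$ (with its structural constant) indeed forces $X_0^\star\in U_{X^\star}$ for $x_0$ on the segment between $\bar x$ and $\hat x$ inside $B_{\delta/2}(x^\star)$; everything else is an exponent count identical in spirit to Theorem \ref{thm:holdercontinuity}, so no new ideas are needed beyond those in the reflector analogue of the main set-inclusion theorem.
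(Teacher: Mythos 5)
Your proposal is correct and follows essentially the same route the paper takes: it is the verbatim adaptation of the proof of Theorem \ref{thm:holdercontinuity}, combining the set inclusion from Theorem \ref{thm:reflectorsmainestimate} with the lower bound \eqref{eq:conditiononsigmameasureofthetubereflectors} and the upper bound \eqref{eq:assumptiononmeasuresigmareflectors}, and the exponent count checks out. The point you flag about ensuring $X_0^\star\in U_{X^\star}$ is already handled in the paper via the uniform Lipschitz bound for reflectors (as in Lemma \ref{lm:mainlemmareflector}), so nothing further is needed.
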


We can then proceed in exactly the same way as in the refractor problem to get the analogues of Theorem \ref{thm:c1alpha},
and Corollary \ref{corollary:applicationtorefractors} for parallel reflectors.

%

\begin{remark}\label{rmk:planetargetreflector}\rm
We show here that when the target $\Sigma$ is contained in a non-vertical hyperplane, then condition \eqref{eq:AWPreflectors} holds.
Notice that from \eqref{eq:formulaforinnerprodofp}, it is sufficient to show that
\begin{equation}\label{eq:formulafornonverticalplane}
\left\langle D_v^2\left(\dfrac{1+|v|^2}{s_{X}\left( \Lambda(v)\right)} \right)\xi,\xi\right\rangle \leq -C
\end{equation}
for all $v\in \R^n$ and $|\xi|=1$ ($|\eta|=1$).
Let $w\in \R^n$, and $\Sigma=\{(y,y_{n+1}), y_{n+1}=y\cdot w\}$.
Suppose $0<C_1\leq x_{n+1}-x\cdot w\leq C_2$ for all $X\in C_\Omega$.
We calculate $\dfrac{1+|v|^2}{s_X(\Lambda (v))}$. 
From \eqref{eq:reflectedrayLambda} we have
\begin{align*}
y-x&=s_X(\Lambda (v))\,\dfrac{2v}{1+|v|^2}\\
y_{n+1}-x_{n+1}&= s_X(\Lambda (v))\,\dfrac{|v|^2-1}{1+|v|^2}.
\end{align*}
Hence
$\dfrac{1+|v|^2}{s_X(\Lambda (v))}=\dfrac{2 v\cdot w +1 -|v|^2}{x_{n+1}-x\cdot w}$, and then
$\left\langle D_v^2\left(\dfrac{1+|v|^2}{s_{X}\left( \Lambda(v)\right)} \right)\xi,\xi\right\rangle=-\dfrac{2}{x_{n+1}-x\cdot w}\leq -C$.

If $(y,y_{n+1})\in \Sigma$ and $\Sigma$ is a vertical plane, then $y\cdot w=0$ and we get
$\dfrac{1+|v|^2}{s_X(\Lambda (v))}=-\dfrac{2v\cdot w}{x\cdot w}$, so $D_v^2\left(\dfrac{1+|v|^2}{s_{X}\left( \Lambda(v)\right)} \right)=0$.
\end{remark}

\begin{remark}\label{rmk:definitionofreflectorenclosing}\rm
Similarly to Section \ref{sec:alternativedefinitionofrefractor}, we can define parallel reflector by taking paraboloids enclosing the solution.
In other words, we can say $u:\Omega\rightarrow [0,M]$ is a parallel reflector from $\Omega$ to $\Sigma$ if for each $x_0\in\Omega$,
there exists $Y\in\Sigma$ such that $u(x)\leq p(x,Y,X_0)$ for all $x\in\Omega$, where $X_0=(x_0,u(x_0))$.
The same regularity results hold with this definition of solution by changing accordingly the inequalities in the conditions on the target.
Indeed, with the notation in Subsection \ref{subsect:hypothesesontargetrflector}, condition \eqref{eq:reflectorconvexityconditiononthetargetbisbis}
is replaced by 
\begin{equation}\label{eq:essentialhypothesisreflectorbis}
\min\left\{p(x,\bar Y,Z),p(x,\hat Y,Z)\right\}\leq p(x,Y_{Z}(\lambda),Z)-C_{X_0}\,|\bar Y-\hat Y|^{2}|x-z|^{2}.
\end{equation}
The analogue of condition \eqref{eq:AWPreflectors} is now
\begin{equation*}
\dfrac{d^{2}}{d\epsilon^{2}}\left. \left\langle D_x^{2}p(x_0,Y_\epsilon,X_0)\eta,\eta\right\rangle \right|_{\epsilon=0}\leq -C|\xi|^{2}|\eta|^{2},
\end{equation*}
and as before this is equivalent to  \eqref{eq:essentialhypothesisreflectorbis}.
The concavity of $u$ now replaces Lemma \ref{lm:estimateconvexcombinationreflector}, 
the inequality in Lemma \ref{lm:mainlemmareflector} is replaced by 
$$u(x)-p(x,Y,X_0^{\star})\leq  C\,|\bar Y-\hat Y||\bar x-\hat x|+C\,|Y(\lambda)-Y||x-x_0|-C_1\lambda(1-\lambda)|\bar Y-\hat Y|^{2}|x-x_0|^{2},$$
and in the proof the $\max$ is replaced by the $\min$.
In contrast with Remark \ref{rmk:planetargetreflector}, in this case a target contained in a hyperplane cannot satisfy \eqref{eq:essentialhypothesisreflectorbis}.
\end{remark}

\providecommand{\bysame}{\leavevmode\hbox to3em{\hrulefill}\thinspace}
\providecommand{\MR}{\relax\ifhmode\unskip\space\fi MR }
\providecommand{\MRhref}[2]{%
  \href{http://www.ams.org/mathscinet-getitem?mr=#1}{#2}
}
\providecommand{\href}[2]{#2}


\begin{thebibliography}{MTW05}

\bibitem[Cla90]{clarke:bookooptimizationandnonsmoothanalysis}
Frank~H. Clarke, \emph{Optimization and nonsmooth analysis}, republication of
  the work first published by {J}ohn {W}iley \& {S}ons, 1983 ed., Classics in
  Applied Mathematics, vol.~5, SIAM, Philadelphia, PA, 1990.

\bibitem[GH09]{gutierrez-huang:farfieldrefractor}
C.~E. Guti\'errez and Qingbo Huang, \emph{The refractor problem in reshaping
  light beams}, Arch. Rational Mech. Anal. \textbf{193} (2009), no.~2,
  423--443.

\bibitem[GT13]{gutierrez-tournier:parallelrefractor}
C.~E. Guti\'errez and F.~Tournier, \emph{The parallel refractor}, Development
  in Mathematics \textbf{28} (2013), 325--334.

\bibitem[KW10]{karakhanyan-wang:nearfieldreflector}
A.~Karakhanyan and Xu-Jia Wang, \emph{On the reflector shape design}, J. Diff.
  Geom. \textbf{84} (2010), 561--610.

\bibitem[Loe09]{loeper:actapaper}
G.~Loeper, \emph{On the regularity of solutions of optimal transportation
  problems}, Acta Math. \textbf{202} (2009), 241--283.

\bibitem[MTW05]{MaTrudingerWang:regularityofpotentials}
Xi-Nan Ma, N.~Trudinger, and Xu-Jia Wang, \emph{Regularity of potential
  functions of the optimal transportation problem}, Arch. Rational Mech. Anal.
  \textbf{177} (2005), no.~2, 151--183.

\end{thebibliography}
\end{document}